\newtheorem{theorem}{Theorem}[section]
\newtheorem{lemma}[theorem]{Lemma}
\newtheorem{corollary}[theorem]{Corollary}
\theoremstyle{definition}
\theoremstyle{definition}
\newtheorem{example}[theorem]{Example}
\theoremstyle{remark}
\newtheorem{remark}[theorem]{Remark}
\numberwithin{equation}{section}
\def\XXint#1#2#3{{
\setbox0=\hbox{$#1{#2#3}{\int}$}
\vcenter{\hbox{$#2#3$}}\kern-.5\wd0}}
\begin{document}
\title{Fast and highly accurate computation of Chebyshev expansion coefficients of analytic functions}
\author{Haiyong Wang\footnotemark[1]~ and Daan Huybrechs\footnotemark[2]
\\[3\jot]
KU Leuven \\
Department of Computer Science\\
Celestijnenlaan 200A, B-3001 Leuven, Belgium}

\maketitle
\renewcommand{\thefootnote}{\fnsymbol{footnote}}
\footnotetext[1]{School of Mathematics and Statistics, Huazhong
University of Science and Technology, Wuhan 430074, P. R. China.
E-mail: \texttt{haiyongwang@hust.edu.cn}  } \footnotetext[2]{Dept.
of Computer Science, KU Leuven, Belgium. E-mail:
\texttt{daan.huybrechs@cs.kuleuven.be}}

\begin{abstract}
Chebyshev expansion coefficients can be computed efficiently by using the FFT, and for smooth functions the resulting approximation is close to optimal, with computations that are numerically stable. Given sufficiently accurate function samples, the Chebyshev expansion coefficients can be computed to machine precision accuracy. However, the accuracy is only with respect to absolute error, and this implies that very small expansion coefficients typically have very large relative error. Upon differentiating a Chebyshev expansion, this relative error in the small coefficients is magnified and accuracy may be lost, especially after repeated differentiation. At first sight, this seems unavoidable. Yet, in this paper, we focus on an alternative computation of Chebyshev expansion coefficients using contour integrals in the complex plane. The main result is that the coefficients can be computed with machine precision relative error, rather than absolute error. This implies that even very small coefficients can be computed with full floating point accuracy,
even when they are themselves much smaller than machine precision. As a result, no accuracy is lost after differentiating the expansion, and even the $100$th derivative of an analytic function can be computed with near machine precision accuracy using standard floating point arithmetic. In some cases, the contour integrals can be evaluated using the FFT, making the approach both highly accurate and fast.
\end{abstract}

{\bf Keywords:} spectral expansion, analytic functions, FFT, spectral differentiation.

\vspace{0.05in}

{\bf AMS classifications:} 42C10, 65N35.

\section{Introduction}\label{sec:introduction}

Among all classical orthogonal polynomials, Chebyshev polynomials play a special rule in numerical analysis due to their connection with FFT algorithms and their numerical stability \cite{trefethen2012atap}. They allow the accurate manipulation of continuous functions using discrete function evaluations \cite{trefethen2011chebfun}. Yet, in spite of the useful connection to the FFT on the real line, the goal of this paper is to show that an alternative computation can make Chebyshev expansions even more accurate than they already are, at least for analytic functions. In many cases, the efficiency of the FFT can be maintained. In a companion paper, we set out to illustrate that non-trivial manipulations of the Chebyshev expansion coefficients, namely their conversion to expansions in more general Jacobi polynomials, maintain this high accuracy beyond what one may have expected.

The contents of this paper have been inspired mainly by a fast method for the computation of Legendre coefficients due to Iserles \cite{iserles2011fastlegendre} and an accurate method for the computation of high-order derivatives in the complex plane due to Bornemann \cite{bornemann2010highorderderivatives}. Before presenting our results, we elaborate briefly on the above references and on other existing research in this area.

\subsection{Fast methods for computing polynomial expansions}

Several methods have been described for the fast computation of
polynomial expansion coefficients
\cite{alpert1991fastlegendre,driscoll1994fouriertransforms,don1994chebyshevlegendre,dutt1996fastpolynomial,driscoll1997fast,potts1998fastpolynomialtransforms,inda2001legendreparallel,keiner2009fastgegenbauer,iserles2011fastlegendre,demicheli2011legendre,cantero2012ultraspherical,xiang2013fastlegendre}.
In particular, the special case of Legendre
polynomials has received the most study
\cite{alpert1991fastlegendre,don1994chebyshevlegendre,inda2001legendreparallel,iserles2011fastlegendre,demicheli2011legendre,xiang2013fastlegendre}.
A popular strategy is to use the FFT, with computations based on
function evaluations at the Chebyshev points
\cite{alpert1991fastlegendre,driscoll1994fouriertransforms,don1994chebyshevlegendre,potts1998fastpolynomialtransforms}.
More general sets of evaluation points have been treated using Fast
Multipole Methods \cite{dutt1996fastpolynomial}, using a particular
matrix-factorization of the problem stated as a matrix-vector
product \cite{driscoll1997fast}, using non-uniform FFT's
\cite{keiner2009fastgegenbauer}
 and using numerical computation of Abel transforms \cite{demicheli2011legendre}. All these methods exhibit $\mathcal{O}(N \log N)$ computational complexity, possibly with additional logarithmic factors, for the computation of the first $N$ coefficients. The accuracy is sometimes restricted to a chosen small value $\epsilon$.

Methods based on function evaluations in the Chebyshev points are, at least mathematically, equivalent to an expansion in Chebyshev
polynomials of the first kind. The coefficients of this expansion can then be rearranged in varying ways in order to form the Legendre
expansion or other expansions. A unique feature of the fast methods for Legendre polynomials in \cite{iserles2011fastlegendre} and more
general ultraspherical polynomials in \cite{cantero2012ultraspherical} is that the evaluation points may be in the complex plane, if the function to be approximated is analytic. We will show further on that this approach is also implicitly equivalent to expanding in a set of Chebyshev polynomials
(of the second kind, in this case), and then rearranging the coefficients. We will be using the same contour integrals as in these references, and a variant which leads to the computation of expansions in Chebyshev polynomials of the first kind.

\subsection{Accurate computation of high-order derivatives}

Computing derivatives of a function numerically is a notoriously ill-conditioned problem, especially for high-order derivatives \cite{miel1985differentiation}. It was shown by Bornemann in \cite{bornemann2010highorderderivatives} that computation of high-order derivatives through Cauchy integrals in the complex plane is, in fact, stable. To be precise, consider the power series of a function analytic at the origin, with radius of convergence $R$,
\[
f(z) = \sum_{k=0}^\infty t_k z^k, \quad |z| < R.
\]
The coefficients $t_n$ can be written as a contour integral along a disc with radius $r < R$,
\begin{equation}\label{eq:taylorcoefficient}
 t_n = \frac{f^{(n)}(0)}{n!} = \frac{1}{2\pi i} \int_{|z|=r} \frac{f(z)}{z^{n+1}} {\rm d}z.
\end{equation}
Such integrals can be evaluated quickly with the FFT for a range of
$n$ with the parametrization $z = r e^{i \theta}$. However,
Bornemann showed that for each $n$ an optimal value of $r$ exists,
such that evaluating the contour integral is, for most analytic
functions, perfectly stable. A detailed analysis is given in
\cite{bornemann2010highorderderivatives} to characterize the optimal
radius, exactly or approximately, for several classes of analytic
functions. Using the optimal radius $r$ for each value of $n$
precludes the use of the FFT. However, small \emph{relative error}
of the coefficient $t_n$ is guaranteed. As a result, for most
analytic functions $t_n$ can be computed with a number of
digits close to the maximal accuracy allowed by the machine
precision and with values of $n$ ranging up to millions.

\subsection{Main results and outline of this paper}

We describe and analyze an efficient way to compute expansions in Chebyshev polynomials of the first or of the second kind. The computations are performed in the complex plane. We use the trapezoidal rule for the integrals
\begin{equation}\label{eq:an}
 a_n = \frac{1}{\pi \rho^n} \int_{0}^{2\pi}  f \left(  \tfrac{1}{2} (\rho e^{i\theta} + ( \rho e^{i\theta})^{-1} )  \right) e^{- i n
\theta} d\theta
\end{equation}
and
\begin{equation}\label{eq:bn}
 b_n = \frac{1}{2\pi \rho^n } \int_{0}^{2 \pi} f \left( \tfrac{1}{2} (\rho e^{i\theta} + ( \rho e^{i\theta} )^{-1} ) \right) (1 - ( \rho e^{i \theta} )^{-2} )  e^{- i n \theta}  d\theta.
\end{equation}
The latter integrals (for $b_n$) are those appearing in
\cite[(3.5)]{iserles2011fastlegendre} and in
\cite[(3.2)]{cantero2012ultraspherical}. We show in
\S\ref{sec:chebyshev expansion} that the values $a_n$ and $b_n$
correspond to coefficients of polynomial expansions using Chebyshev
polynomials of the first and second kind respectively. The value of
$\rho \geq 1$ is arbitrary and limited by the analyticity of $f$.

We show in \S\ref{sec:stability} that the trapezoidal rule for these
integrals is always stable with respect to absolute errors of the
normalized values $\rho^n a_n$ and $\rho^n b_n$. Furthermore, we
show that in many cases for each $n$ an optimal value $\rho^*(n)$
exists, such that the computation is stable with respect to relative
errors. This implies that also very small coefficients can be
computed to high accuracy. The cases depend on the properties of $f$
in the complex plane and they correspond to the cases described by
Bornemann in \cite{bornemann2010highorderderivatives} in the context
of computing high-order derivatives. The integrals \eqref{eq:an} and
\eqref{eq:bn} play the role of the Cauchy integral
\eqref{eq:taylorcoefficient} in
\cite{bornemann2010highorderderivatives}.

We explore two strategies for the computation of Chebyshev coefficients in the complex plane in \S\ref{sec:two strategies}. Efficiency is maximized by using the FFT along a fixed contour in \S\ref{sec:maximize the efficiency}, while accuracy is maximized by optimizing the contour for each coefficient in \S\ref{sec:accuracy}. The theory is illustrated with several numerical examples.

Next, we show that repeated differentiation of the polynomial expansions can be
performed without loss of precision in \S\ref{sec:differentiation}. Finally, we illustrate that this has a beneficial effect on the accuracy of rootfinding, in particular when applying rootfinding on the derivative of a function in order to find its maxima or inflexion points, in \S\ref{sec:roots}. We end the paper with some concluding remarks and questions for further research in \S\ref{sec:conclusion}.

\section{Chebyshev expansion coefficients}\label{sec:chebyshev expansion}

It is well known that the Chebyshev coefficients can be computed efficiently by the FFT and that this computation is numerically stable with respect to absolute errors. In the following, we will show that this strategy remains stable when performing computations along certain contours in the complex plane. For the stability with respect to relative errors, a different theory should be considered. We begin our analysis with an alternative integral expression of Chebyshev coefficients.

\subsection{Chebyshev expansion of the first kind}

Let $T_n(x)$ denote the Chebyshev polynomial of the first kind of
degree $n$, as defined by
\begin{align*}
T_n(\cos\theta) = \cos ( n \theta ), \quad  n \geq 0.
\end{align*}
If a function $f(x)$ satisfies a Dini-Lipschitz condition on
the interval $[-1,1]$ then it can be expanded uniformly in terms of
$T_n(x)$ as \cite[Thm.~5.7]{mason2003chebyshev}
\begin{align}\label{eq:first cheb expansion}
f(x) = \sum_{n = 0}^{\infty}{'}  a_n  T_n(x),
\end{align}
where the prime indicates that the first term of the sum should be
halved and the coefficients are given by the integrals
\begin{align}\label{eq:ChebTcoeff}
a_n = \frac{2}{\pi} \int_{-1}^{1} \frac{ f(x) T_n(x) }{ \sqrt{ 1 -
x^2 } } dx, \quad n\geq 0.
\end{align}
We are interested in integral expressions for $a_n$ in the complex
plane. Let $\mathcal{E}_{\rho}$ denote the \emph{Bernstein ellipse}
\[
 \mathcal{E}_{\rho} =  \left\{ z \in \mathbb{C} ~\bigg| ~z =
 \frac{1}{2} \big( \rho e^{i \theta} + \rho^{-1} e^{-i \theta} \big),~ ~0 \leq
\theta  \leq   2\pi  \right\}.
\]
We will always assume $\rho \geq 1$. 
We denote the interior of this ellipse by
\[
 \mathcal{D}_\rho =  \left\{ z \in \mathbb{C} ~\bigg| ~z =
 \frac{1}{2} \big( r e^{i \theta} + r^{-1} e^{-i \theta} \big),~ ~1 \leq r < \rho, ~0 \leq
\theta  \leq   2\pi  \right\}.
\]
It is well known that the Bernstein ellipses have foci $\pm
1$ and their major and minor semiaxis lengths summing to
$\rho$. In the following, we will often use the notation
\begin{equation}\label{eq:z_u}
 z(u) = \frac{1}{2}(u + u^{-1}),
\end{equation}
where typically $u=\rho e^{i \theta}$ is a point on the circle with radius $\rho$ and $z(u)$ lies on the Bernstein ellipse $\mathcal{E}_{\rho}$. The inverse expression (the one that satisfies $|u|>1$) is
\begin{equation}\label{eq:u_z}
 u(z) = z + \sqrt{z^2-1}.
\end{equation}

The following integral expression for $a_n$ was derived by Elliott in \cite[Eqn.~(28)]{elliott1964chebyshev} for entire functions $f(z)$ by using Cauchy's integral formula. Here, we shall give a simpler proof based on Laurent series expansions. We further show that the expression remains valid for functions analytic only in a neighborhood of the interval $[-1,1]$.

\begin{lemma}
If $f$ is analytic inside and on the Bernstein ellipse
$\mathcal{E}_{\rho}$ with $\rho > 1$, then for each $n \geq 0$ we
have
\begin{align}\label{eq:ChebyT_contour_integral}
a_n & = \frac{1}{\pi \rho^n} \int_{0}^{2\pi}  f \left(  \tfrac{1}{2}
(\rho e^{i\theta} + ( \rho e^{i\theta})^{-1} )  \right) e^{- i n
\theta} d\theta.
\end{align}
\end{lemma}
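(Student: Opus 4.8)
The plan is to follow the route announced in the text and work with the Laurent expansion of the pullback of $f$ under the Joukowski map. Set $g(u) = f(z(u))$ with $z(u)=\tfrac12(u+u^{-1})$ as in \eqref{eq:z_u}. First I would check the single ingredient that makes everything run: the map $z$ is rational and analytic away from $u=0$, and it sends each circle $|u|=r$ with $1\le r\le\rho$ onto the ellipse $\mathcal{E}_r\subseteq\overline{\mathcal{D}_\rho}$, and each circle $|u|=r$ with $\rho^{-1}\le r<1$ onto $\mathcal{E}_{1/r}\subseteq\overline{\mathcal{D}_\rho}$ as well. Hence $z$ carries the closed annulus $\{\,\rho^{-1}\le|u|\le\rho\,\}$ into the closed region bounded by $\mathcal{E}_\rho$, where $f$ is analytic by hypothesis. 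Consequently $g$ is analytic on that closed annulus and admits a Laurent expansion $g(u)=\sum_{k=-\infty}^{\infty} c_k u^k$ converging there, with coefficients independent of the radius of the circle used to compute them.

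Next I would exploit the symmetry of the Joukowski map. Since $z(u)=z(1/u)$, we have $g(u)=g(1/u)$, and uniqueness of Laurent coefficients forces $c_k=c_{-k}$ for all $k$. Restricting to the unit circle $u=e^{i\theta}$, where $z(e^{i\theta})=\cos\theta$, the Laurent series collapses to a cosine series,
\[
g(e^{i\theta}) = c_0 + 2\sum_{k=1}^{\infty} c_k \cos(k\theta),
\]
while the defining expansion \eqref{eq:first cheb expansion} together with $T_k(\cos\theta)=\cos(k\theta)$ gives
\[
f(\cos\theta) = \tfrac12 a_0 + \sum_{k=1}^{\infty} a_k \cos(k\theta).
\]
Comparing these two cosine series term by term — the prime in \eqref{eq:first cheb expansion} halves exactly the constant term, matching $c_0$ against $\tfrac12 a_0$ — yields the clean identification $a_n = 2c_n$ for every $n\ge 0$.

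Finally I would write the Laurent coefficient as a contour integral over the circle $|u|=\rho$, which is legitimate because the coefficients do not depend on the chosen radius within the annulus and $g$ is analytic up to and including $|u|=\rho$:
\[
c_n = \frac{1}{2\pi i}\oint_{|u|=\rho}\frac{g(u)}{u^{n+1}}\,\mathrm{d}u.
\]
Parametrizing $u=\rho e^{i\theta}$, so that $\mathrm{d}u/u^{n+1}=\rho^{-n}e^{-in\theta}\,i\,\mathrm{d}\theta$, and substituting $a_n=2c_n$ together with $g(\rho e^{i\theta})=f\!\left(\tfrac12(\rho e^{i\theta}+(\rho e^{i\theta})^{-1})\right)$, reproduces \eqref{eq:ChebyT_contour_integral} after an elementary simplification.

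The only genuinely delicate point is the first one: verifying that $g$ is single-valued and analytic on the entire closed annulus, i.e.\ that $z\bigl(\{\rho^{-1}\le|u|\le\rho\}\bigr)$ stays inside and on $\mathcal{E}_\rho$ where $f$ is analytic. Everything afterwards — the symmetry $c_k=c_{-k}$, the matching of the two cosine series, and the passage to the contour integral — is routine once the Laurent expansion is known valid on a neighborhood of $|u|=\rho$. I expect no trouble from convergence, since analyticity makes the Dini–Lipschitz hypothesis underlying \eqref{eq:first cheb expansion} automatic, which in turn guarantees the uniform convergence needed to equate the two series on $|u|=1$.
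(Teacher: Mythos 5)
Your proof is correct, and it reaches the Laurent-coefficient formula by a genuinely different identification step than the paper. The paper starts from the Chebyshev expansion and invokes Szeg\H{o}'s theorem that this expansion converges in the interior of the greatest ellipse in which $f$ is analytic; it then substitutes the complex-plane formula $T_k(z(u)) = \tfrac{1}{2}(u^k + u^{-k})$ to rewrite the series as $\tfrac{1}{2}\sum_{n=-\infty}^{\infty} a_{|n|} u^n$, so that the Chebyshev coefficients are read off as the Laurent coefficients of $2f(z(u))$ by uniqueness of Laurent expansions, and the contour integral follows. You go the opposite way: you first establish the \emph{existence} of a Laurent expansion of $g = f \circ z$ with unknown coefficients $c_k$, directly from analyticity of $g$ on the annulus $\rho^{-1} \leq |u| \leq \rho$ (via the mapping properties of the Joukowski map), then use the symmetry $z(u) = z(1/u)$ to force $c_k = c_{-k}$, and finally identify $a_n = 2c_n$ by matching two uniformly convergent cosine series on the unit circle, i.e.\ using the Chebyshev expansion on $[-1,1]$ only. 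What your route buys: it needs only the classical interval convergence of the Chebyshev series (automatic for analytic $f$, as you note), not Szeg\H{o}'s stronger ellipse-convergence result, and it re-derives that result as a byproduct, since the Laurent series convergent on the annulus is precisely the unfolded Chebyshev series and hence the Chebyshev expansion converges on and inside $\mathcal{E}_\rho$. What the paper's route buys is brevity: granted the cited theorem, the Laurent series is identified in one line by direct substitution. Both arguments share the final step, namely the Laurent coefficient formula on $|u| = \rho$ parametrized by $u = \rho e^{i\theta}$, and your bookkeeping there (the factor $2$, the prime on the constant term, and the simplification of $u^{-n-1}\,du$) is accurate.
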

\begin{proof}
First we recall that the Chebyshev expansion is convergent in
the interior of the greatest ellipse in which $f(x)$ is analytic
\cite[Thm.~9.1.1]{szego1939polynomials}. Moreover, recall the
definition of the Chebyshev polynomials of the first kind in the
complex plane \cite[Eqn.~(1.47)]{mason2003chebyshev}
\begin{align}\label{def:ChebT in complex plane}
T_k( z(u) ) = \frac{1}{2} ( u^k + u^{-k} ),
\end{align}
which implies
\begin{align*}
f ( z(u) ) & = \sum_{n =
0}^{\infty}{'}  a_n  T_n( z(u) ) \\
& = \frac{1}{2} \sum_{n = -\infty}^{\infty} a_{|n|} u^n,
\end{align*}
where $z(u)$ is inside or on the boundary of $\mathcal{E}_{\rho}$.
For each $n\geq 0$, the last equality shows that the $n$-th
Chebyshev coefficient of $f(x)$ corresponds exactly the $n$-th
coefficient of the Laurent series expansion of $2 f(z(u))$ at the
origin. Therefore, we can deduce immediately that for each $n\geq
0$,
\begin{align*}
a_n & = \frac{1}{2 \pi i} \oint_{ \mathcal{C}_{\rho} } 2 f(z(u)) u^{-n-1} d u \\
& = \frac{1}{\pi i} \oint_{ \mathcal{C}_{\rho} } f( z(u) ) u^{-n-1}
d u.
\end{align*}
where $\mathcal{C}_{\rho}$ denotes the circle $ |u| = \rho $.
Substituting $u = \rho e^{i \theta} $ into the last equality yields
the desired result.
\end{proof}

We make some further comments regarding \eqref{eq:ChebTcoeff} and
\eqref{eq:ChebyT_contour_integral}:
\begin{itemize}

\item We define the \emph{normalized} Chebyshev coefficient to be $\rho^n a_n$. In spite of its dependence on the parameter $\rho$, this definition is a natural one because the FFT-based algorithms presented further on yield a small absolute error of the normalized coefficients for a given value of $\rho$.

\item Letting $\rho \rightarrow 1$ and using the change of variable $x =
\cos \theta$, \eqref{eq:ChebyT_contour_integral} reduces to \eqref{eq:ChebTcoeff}.

\item In the same limit $\rho \to 1$, we also obtain the well known expression
\begin{equation}\label{eq:dctformula}
a_n = \frac{1}{\pi} \int_{0}^{2\pi} f(\cos \theta)
e^{-i n \theta} d\theta = \frac{1}{\pi}  \int_{0}^{2 \pi}
f(\cos \theta) \cos(n \theta) d\theta.
\end{equation}
The last expression is often the starting point for
introducing fast algorithms based on the discrete cosine or
Fourier transform to evaluate the Chebyshev coefficients (see, for
example, \cite{trefethen2012atap,gil2007specialfunctions,iserles2009firstcourse,mason2003chebyshev}).

\item Integral expressions for $a_n$ in the complex plane date back at
least to Bernstein \cite{bernstein1912}. They have been used, among other purposes, to estimate the decay
rates of Chebyshev coefficients (see, for example,
\cite{elliott1964chebyshev,rivlin1974chebyshev}). To the best of our knowledge, they have not
been used for computational purposes. One obvious reason is that it is not clear whether there is any advantage in evaluating \eqref{eq:ChebyT_contour_integral} compared to evaluating \eqref{eq:dctformula}, especially in view of the existence of simple, fast and stable algorithms for the latter. Furthermore, expression \eqref{eq:ChebyT_contour_integral} requires analyticity of $f$. We will show later on that expression \eqref{eq:ChebyT_contour_integral} can be used to give better approximations in the sense that the relative error of each Chebyshev coefficient can be minimized by choosing an optimal value of $\rho$.

\item For example, \eqref{eq:ChebyT_contour_integral} leads to the well-known bound \cite[Thm.~3.8]{rivlin1974chebyshev}
\[
 |a_n| \leq \frac{2 {\mathcal{M}}}{\rho^n},
\]
where $\mathcal{M}$ is the maximum absolute value of $f$
along the Berstein ellipse $\mathcal{E}_\rho$.
\end{itemize}

\subsection{Chebyshev expansion of the second kind}\label{ss:secondkind}

Let $U_n(x)$ denote the Chebyshev polynomial of the second kind of
degree $n$, defined by
\begin{align*}
U_n(\cos\theta) = \frac{\sin(n + 1) \theta}{\sin \theta}, \quad n
\geq  0.
\end{align*}
The Chebyshev expansion of the second kind is given by
\begin{align}\label{eq:ChebyU expansion}
f(x) = \sum_{n = 0}^{\infty}  b_n  U_n(x),
\end{align}
where
\begin{align}\label{eq:ChebyU coeff}
b_n = \frac{2}{\pi} \int_{-1}^{1}  \sqrt{ 1 - x^2 }  f(x)  U_n(x)dx.
\end{align}

\begin{lemma}\label{chebyU coefficient}
If $f$ is analytic inside and on the Bernstein ellipse
$\mathcal{E}_{\rho}$ with $\rho>1$, then for each $n \geq 0$ we have
\begin{align}\label{eq:ChebyU coeff contour integral}
b_n & = \frac{1}{2\pi \rho^n } \int_{0}^{2 \pi} f \left(
\tfrac{1}{2} (\rho e^{i\theta} + ( \rho e^{i\theta} )^{-1} ) \right)
(1 - ( \rho e^{i \theta} )^{-2} )  e^{- i n \theta}  d\theta.
\end{align}
\end{lemma}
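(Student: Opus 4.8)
The plan is to mirror the proof of the first-kind lemma, replacing the role played there by the identity $T_k(z(u)) = \tfrac12(u^k + u^{-k})$ with the analogous complex-plane representation of the Chebyshev polynomials of the second kind. First I would record that, with $z(u) = \tfrac12(u+u^{-1})$,
\[
U_n(z(u)) = \frac{u^{n+1} - u^{-(n+1)}}{u - u^{-1}},
\]
which one verifies by setting $u = e^{i\theta}$ and recovering $U_n(\cos\theta) = \sin((n+1)\theta)/\sin\theta$. As in the first-kind case, analyticity of $f$ inside and on $\mathcal{E}_\rho$ guarantees that the expansion \eqref{eq:ChebyU expansion} converges uniformly for $z(u)$ on and inside $\mathcal{E}_\rho$, so that substitution and term-by-term manipulation are justified.

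The key algebraic step is to clear the denominator. Substituting the representation into \eqref{eq:ChebyU expansion} and multiplying both sides by $(u - u^{-1})$ gives
\[
(u - u^{-1})\, f(z(u)) = \sum_{n=0}^{\infty} b_n \big( u^{n+1} - u^{-(n+1)} \big),
\]
a bilateral (Laurent) series in $u$ with no constant term, in which the coefficient of $u^{m}$ equals $b_{m-1}$ for $m \geq 1$ and $-b_{-m-1}$ for $m \leq -1$. In particular, for each $n \geq 0$ the coefficient of $u^{n+1}$ is exactly $b_n$. Hence, by the Cauchy formula for Laurent coefficients along the circle $\mathcal{C}_\rho$ where $|u|=\rho$,
\[
b_n = \frac{1}{2\pi i} \oint_{\mathcal{C}_\rho} \frac{(u - u^{-1})\, f(z(u))}{u^{n+2}}\, du.
\]

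It then remains to parametrize and simplify. Writing $u = \rho e^{i\theta}$, so that $du = i u\, d\theta$, and using the identity $\frac{u - u^{-1}}{u^{n+1}} = u^{-n}\big(1 - u^{-2}\big)$ together with $u^{-n} = \rho^{-n} e^{-in\theta}$, reduces the contour integral directly to the stated expression \eqref{eq:ChebyU coeff contour integral}. I expect the only delicate point to be the index bookkeeping: after multiplying by $(u - u^{-1})$ one must confirm that the shift lands the target coefficient $b_n$ on the power $u^{n+1}$ (equivalently, that the absence of a constant term causes no interference), and that the uniform convergence justifying term-by-term contour integration is inherited from the convergence of \eqref{eq:ChebyU expansion} on the ellipse. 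Everything else is routine simplification.
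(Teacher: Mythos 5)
Your proposal is correct and takes essentially the same approach as the paper: both substitute the complex-plane representation $U_n(z(u)) = (u^{n+1}-u^{-n-1})/(u-u^{-1})$ into the expansion, clear the denominator to obtain a Laurent series, extract $b_n$ via the Cauchy coefficient formula on $\mathcal{C}_\rho$, and parametrize $u=\rho e^{i\theta}$. The only difference is cosmetic: the paper multiplies by $1-u^{-2}$ and reads $b_n$ off the coefficient of $u^{n}$, while you multiply by $u-u^{-1}=u(1-u^{-2})$ and read it off $u^{n+1}$, a shift by one power of $u$ that your index bookkeeping handles correctly.
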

\begin{proof}
Using the definition of the Chebyshev polynomials of the
second kind in the complex plane \cite[Eqn.~(1.51)]{mason2003chebyshev}
\[
U_k( z(u) ) = \frac{u^{k+1} - u^{-k-1}}{u - u^{-1}},
\]
we have that
\begin{align*}
f ( z(u) ) & = \sum_{n =
0}^{\infty}  b_n  U_n( z(u) ) \\
& =  \sum_{n = 0}^{\infty} b_{n} \frac{u^{n+1} - u^{-n-1}}{u -
u^{-1}}.
\end{align*}
Multiplying both sides of the last equality by $1 - u^{-2}$ gives
\begin{align*}
f ( z(u) ) ( 1- u^{-2} ) & =  \sum_{n = 0}^{\infty} b_{n} ( u^{n} -
u^{-n-2} ).
\end{align*}
For each $n\geq 0$, the above equality shows that $b_n$ corresponds
exactly to the $n$-th coefficient of the Laurent series expansion of $
f(z(u)) (1-u^{-2})$ at the origin. Therefore, we can deduce
immediately that for each $n\geq 0$,
\begin{align}\label{eq:ChebyU coeff contour}
b_n & = \frac{1}{2 \pi i} \oint_{ \mathcal{C}_{\rho} } f ( z(u) ) (
1- u^{-2} ) u^{-n-1} d u .
\end{align}
Substituting $u = \rho e^{i \theta} $ into the last equality yields
the desired result.
\end{proof}

Here, too, we will make some further comments regarding \eqref{eq:ChebyU coeff contour integral}:

\begin{itemize}
\item Similarly as before, we define the \emph{normalized} Chebyshev coefficient to be $\rho^n b_n$.

\item Letting $\rho \rightarrow 1$ in \eqref{eq:ChebyU coeff contour
integral} yields,
\begin{align}
b_n & = \frac{1}{2\pi } \int_{0}^{2\pi} f(\cos\theta)(1 -
e^{-2i\theta} ) e^{-i n \theta}  d\theta  \nonumber \\
& = \frac{1}{\pi } \int_{0}^{2\pi} f(\cos\theta)  \sin \theta
\sin(n + 1)\theta  d\theta  \nonumber \\
& = \frac{2}{\pi} \int_{-1}^{1} \sqrt{1 - x^2} f(x) U_n(x) dx,
\end{align}
which corresponds to \eqref{eq:ChebyU coeff}.

\item Expression \eqref{eq:ChebyU coeff contour} can be further written as
\begin{align}
b_n & = \frac{1}{2 \pi i} \oint_{ \mathcal{C}_{\rho} } f ( z(u) ) (
1- u^{-2} ) u^{-n-1} d u \nonumber \\
& =  \frac{1}{\pi i}  \oint_{ \mathcal{E}_{\rho} }f(z(u)) u(z)^{- n
- 1 } d z(u),
\end{align}
which can be used to established the rate of decay of the
coefficients $b_n$.

\item From the inspection of the formulas \eqref{eq:ChebyT_contour_integral} and \eqref{eq:ChebyU coeff contour integral}, it is clear that $a_n$ and $b_n$ are related for all $\rho$ by
\begin{equation}\label{eq:ab_relationship}
 b_n = \frac{a_n - a_{n+2}}{2}.
\end{equation}

\end{itemize}


\section{Absolute and relative stability}\label{sec:stability}

From \eqref{eq:ChebyT_contour_integral} and \eqref{eq:ChebyU coeff
contour integral} we see that both kinds of Chebyshev coefficients
can be expressed in terms of contour integrals with integrands that
are periodic functions of $\theta$. Thus, these coefficients can be
approximated efficiently by applying the trapezoidal rule. For
a more detailed and theoretical analysis of the trapezoidal rule for
periodic and analytic functions, we refer the reader to
\cite{trefethen2014trapezoidal}. In the following we shall consider
stability of the computation of the Chebyshev coefficients with
respect to absolute and relative errors of the normalized
coefficients, respectively.

\subsection{Absolute stability}

For the Chebyshev coefficients of the first kind, using an $m$-point
trapezoidal rule yields
\begin{align}\label{eq:trap for cheb1}
a_{n}(m,\rho) = \frac{2}{m \rho^n} \sum_{j = 0}^{m - 1}  f \left(
\tfrac{1}{2} (\rho e^{2\pi ij/m} + \rho^{-1}e^{-2\pi ij/m} ) \right)
e^{-2\pi i j n/m}.
\end{align}
Let $\mathcal{P}_m$ be the set of all polynomials of degree $\leq m$
and let
\begin{align*}
\| f - s \|_{ \mathcal{D}_{\rho} } := \max_{z \in
\overline{\mathcal{D}}_{\rho} }  | f(z)  -  s(z) |.
\end{align*}
Note that by the maximum modulus principle we have the equality of norms
\begin{equation*}
 \| f - s \|_{ \mathcal{D}_{\rho} } = \| f - s \|_{ \mathcal{E}_{\rho} } := \max_{z \in {\mathcal E}_\rho} | f(z) - s(z)|,
\end{equation*}
so that from now on we simply use $\Vert \cdot \Vert_{\mathcal{E}_{\rho}}$.


Furthermore, let
\[
s_{m}(z) = \sum_{ k = 0}^{m - 1}{'} \eta_k T_k(z)
\]
denote the best $(m-1)$-th degree polynomial approximation to $f(z)$
on and inside the ellipse $\mathcal{E}_{\rho}$, i.e.,
\[
\| f(z) - s_m(z) \|_{ \mathcal{E}_{\rho}  } := \inf_{s \in
\mathcal{P}_{m-1} } \| f(z) - s(z) \|_{  \mathcal{E}_{\rho}  }.
\]

In the following, we will always assume that the \emph{sampling condition} $m > n$ holds, in order to avoid aliasing of the complex exponentials in \eqref{eq:trap for cheb1}. We refer the reader to \cite[\S2.1]{bornemann2010highorderderivatives} for a discussion and justification of this condition.

\begin{theorem}\label{thm:absolute stability}
For $1 \leq  n < m$, we have the following error estimate
\begin{align}
| a_n  -  a_{n}(m,\rho) | \leq  \frac{4}{\rho^n} \| f - s_{m} \|_{
\mathcal{E}_{\rho} } + \frac{ |\eta_{m-n} | }{\rho^m},
\end{align}
and for $n = 0$,
\begin{align}
| a_0  -  a_{0}(m,\rho) | \leq   4 \| f - s_{m} \|_{
\mathcal{E}_{\rho} }.
\end{align}
\end{theorem}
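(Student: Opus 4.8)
The plan is to exploit the linearity of both the exact coefficient formula \eqref{eq:ChebyT_contour_integral} and the $m$-point rule \eqref{eq:trap for cheb1} in the integrand $f$. Writing $f = s_m + r$ with $r := f - s_m$ the remainder of the best approximant, the total quadrature error splits as the error committed on $s_m$ plus the error committed on $r$. I expect the remainder $r$ to produce the first term $\tfrac{4}{\rho^n}\|f - s_m\|_{\mathcal{E}_\rho}$ and the polynomial $s_m$ to produce the second term $|\eta_{m-n}|/\rho^m$. The key is to treat these two pieces with completely different tools.

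For the remainder $r$, I would avoid any series manipulation and simply bound the approximate and the exact coefficient separately by the supremum norm. Since every sample point $\tfrac12(\rho e^{2\pi ij/m} + \rho^{-1}e^{-2\pi ij/m})$ lies on $\mathcal{E}_\rho$, applying the triangle inequality to \eqref{eq:trap for cheb1} with $f$ replaced by $r$ bounds the approximate coefficient by $\tfrac{2}{m\rho^n}\sum_{j=0}^{m-1}|r(z_j)| \le \tfrac{2}{\rho^n}\|r\|_{\mathcal{E}_\rho}$, while the same estimate on the integral \eqref{eq:ChebyT_contour_integral} bounds the true coefficient of $r$ by $\tfrac{1}{\pi\rho^n}\cdot 2\pi\,\|r\|_{\mathcal{E}_\rho} = \tfrac{2}{\rho^n}\|r\|_{\mathcal{E}_\rho}$ as well. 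Adding the two contributions gives $\tfrac{4}{\rho^n}\|r\|_{\mathcal{E}_\rho}$, valid for all $n \ge 0$, including $n = 0$.

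For the polynomial part $s_m$, the Laurent expansion used in the proof of the first lemma gives $s_m(z(u)) = \tfrac{1}{2}\sum_{|k|<m}\eta_{|k|}u^k$, so the integrand in \eqref{eq:trap for cheb1} is a trigonometric polynomial for which the discrete and continuous Fourier coefficients differ only through aliasing. I would invoke the exact aliasing identity, which expresses the $m$-point value as $\sum_{l}\hat g_{n+lm}$ over the Fourier coefficients $\hat g_k = \tfrac12\eta_{|k|}\rho^k$: because $\eta_k$ vanishes for $k \ge m$, only indices with $|n+lm| < m$ survive. For $1 \le n < m$ this leaves the $l=0$ term, which reproduces the exact coefficient $\eta_n$, together with the $l=-1$ term, which contributes precisely $\eta_{m-n}\rho^{-m}$; for $n=0$ only the $l=0$ term survives and the rule is exact. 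This yields the second term $|\eta_{m-n}|/\rho^m$ for $1 \le n < m$ and its absence for $n=0$. Since $s_m$ is a finite sum, the aliasing identity here is an exact equality with no convergence concerns.

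The step that must be handled with care — and where a naive approach fails — is the remainder. Applying the aliasing identity directly to the full $f$ produces the tail $\sum_{p\ge 1} a_{n+pm}\rho^{pm}$, whose terms do not decay under the Bernstein bound $|a_k| \le 2\mathcal{M}\rho^{-k}$ on the \emph{same} ellipse, so this series need not converge unless one assumes analyticity on a strictly larger ellipse. Splitting off $s_m$ first and then estimating the remainder's quadrature and integral \emph{directly} against $\|f - s_m\|_{\mathcal{E}_\rho}$ sidesteps the divergent tail entirely, confining the only genuine series computation to the finite trigonometric polynomial $s_m$, where everything is exact.
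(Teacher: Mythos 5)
Your proposal is correct and follows essentially the same route as the paper: the splitting $f = s_m + r$ with sup-norm bounds of $\tfrac{2}{\rho^n}\|f-s_m\|_{\mathcal{E}_\rho}$ each for the integral and the quadrature sum applied to $r$ is exactly the paper's $E_1$ and $E_3$, and your exact aliasing identity for the trigonometric polynomial $s_m(z(\rho,\theta)) = \tfrac12\sum_{|k|<m}\eta_{|k|}\rho^k e^{ik\theta}$ is precisely the paper's $E_2$ computation via the geometric sum $\sum_{j=0}^{m-1} e^{2\pi i j(k-n)/m}$, yielding the $-\eta_{m-n}\rho^{-m}$ term for $1 \leq n < m$ and zero for $n=0$. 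Your closing remark on why aliasing applied directly to $f$ would produce a possibly divergent tail is a sound observation, though not needed for the proof itself.
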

\begin{proof} Let $z(\rho, \theta) = \frac{1}{2} ( \rho e^{i\theta} + (\rho e^{i \theta})^{-1} )$.
From \eqref{eq:ChebyT_contour_integral} and \eqref{eq:trap for cheb1}, we have
\begin{align*}
a_n - a_n(m,\rho) & =  \frac{1}{\pi \rho^n} \int_{0}^{2\pi}  f
\left( z(\rho, \theta) \right) e^{- i n \theta} d\theta  - \frac{2}{m \rho^n} \sum_{j = 0}^{m - 1} f ( z(\rho, 2 \pi j /m ) ) e^{-2\pi i j n/m} \\
& = \frac{1}{\pi \rho^n} \int_{0}^{2\pi} [ f \left( z(\rho, \theta)
\right) - s_m \left( z(\rho, \theta) \right) ] e^{- i n \theta} d\theta \\
&~~~~~ + \left(  \frac{1}{\pi \rho^n} \int_{0}^{2\pi}  s_m \left(
z(\rho, \theta) \right) e^{- i n \theta} d\theta  - \frac{2}{m
\rho^n} \sum_{j = 0}^{m - 1} s_m ( z(\rho, 2 \pi j /m ) ) e^{-2\pi i
j n/m}
\right) \\
&~~~~~ + \frac{2}{m\rho^n} \sum_{j =0}^{m-1} [s_m ( z(\rho, 2
\pi j /m ) ) - f ( z(\rho, 2 \pi j /m ) )] e^{-2\pi i j n/m}.
\end{align*}
We use $E_1$ to denote the first integral of the last equality,
$E_2$ denotes the difference contained in the brackets and $E_3$
denotes the remaining part. Explicit estimates can be established
for $E_1$ and $E_3$,
\begin{align*}
| E_1 | \leq \frac{2}{\rho^n} \| f - s_{m} \|_{ \mathcal{E}_{\rho}
}, \quad  | E_3 |  \leq  \frac{2}{\rho^n} \| f - s_{m} \|_{
\mathcal{E}_{\rho} }.
\end{align*}
For $E_2$, using \eqref{def:ChebT in complex plane}
we have
\begin{align*}
E_2 & =  \frac{1}{\pi \rho^n} \int_{0}^{2\pi}  s_m \left( z(\rho,
\theta) \right) e^{- i n \theta} d\theta  - \frac{2}{m \rho^n}
\sum_{j =
0}^{m - 1} s_m ( z(\rho, 2 \pi j /m ) ) e^{-2\pi i j n/m} \\
& = \eta_n - \frac{1}{m \rho^n} \sum_{k = -(m-1)}^{m-1} \eta_{|k|}
\rho^k \left(  \sum_{j=0}^{m-1} e^{2 \pi i j (k-n)/m }  \right) \\
& =  \left\{\begin{array}{cc}
                                           0 ,                     & \hbox{$\textstyle  n =
                                           0$}, \\ [6pt]
                                           - \frac{\eta_{m-n}}{ \rho^m },    & \hbox{$1 \leq n < m$}.
                                        \end{array}\right.
\end{align*}
Combining this with estimates of $E_1$ and $E_3$ gives the desired
results.
\end{proof}

From Theorem \ref{thm:absolute stability} we can see that if $f$ is
a polynomial of degree $n$, then we have $s_m = f$ if $m \geq n +
1$. This implies that the trapezoidal rule \eqref{eq:trap for cheb1}
computes the $k$-th Chebyshev coefficient of $f$ exactly if $m \geq
k + n + 1$ since $\eta_{m-n} = 0$. Thus, if we choose $m \geq 2n+1$,
then all Chebyshev coefficients of the polynomial function $f$ can
be computed exactly by the trapezoidal rule \eqref{eq:trap for
cheb1}.


Theorem \ref{thm:absolute stability} implies for any function $f$ that the difference in the normalized coefficients $\rho^n a_n -\rho^n a_n(m,\rho)$ is on the order of $\epsilon$, if $m$ is sufficiently large so that $\eta_{m-n}$ is small.
This assertion is true, since from
\begin{align*}
|\eta_k - a_k| & = \left| \frac{1}{\pi i} \oint_{ \mathcal{C}_{\rho}
} ( s_m( z(u) ) - f( z(u) ) ) u^{-n-1} d u \right| \\
& \leq \frac{2}{\rho^k} \| f(z) - s_m(z) \|_{ \mathcal{E}_{\rho} },
\end{align*}
it follows that
\begin{align*}
|\eta_k | & \leq  | a_k | + \frac{2}{\rho^k} \| f(z) - s_m(z) \|_{
\mathcal{E}_{\rho} } \\
& \leq  \frac{2}{\rho^k} (  {\mathcal{M}} + \| f(z) - s_m(z) \|_{
\mathcal{E}_{\rho} } ).
\end{align*}
This estimate implies that the coefficients $\eta_k$ decay
exponentially fast.

Similarly, for the Chebyshev coefficients $b_n$, the $m$-point
trapezoidal rule gives
\begin{align}\label{eq:trap for cheb2}
b_{n}(m,\rho) = \frac{1}{m \rho^n} \sum_{j = 0}^{m - 1}  f \left(
z(\rho, 2 \pi j /m ) \right) (1 - \rho^{-2} e^{-4 \pi ij/m} )
e^{-2\pi i j n/m}.
\end{align}

\begin{theorem}
We have the following error estimate
\begin{align}
| b_n  -  b_{n}(m,\rho) | \leq  \frac{2 (1 - \rho^{-2}) }{\rho^n} \|
f - s_{m} \|_{ \mathcal{E}_{\rho} } + \left\{\begin{array}{cc}
                                          \frac{| \eta_{m - 2} {} |}{2 \rho^m },                     & \hbox{$\textstyle  n=0$},\\
                                           \frac{| \eta_{m - n} - \eta_{m - n - 2} |}{ 2\rho^m },    & \hbox{$\textstyle  n = 1, \ldots, m-3$},\\
                                          \frac{| \eta_{m - n} - \eta_{ n + 2 - m} |}{ 2\rho^m },    & \hbox{$\textstyle  n = m-2, m-1  $ }.
                                        \end{array}\right.
\end{align}

\end{theorem}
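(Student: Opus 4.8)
The plan is to mirror the proof of Theorem~\ref{thm:absolute stability} for the first kind, inserting the best polynomial approximation $s_m$ and splitting the error into three pieces. Writing $z(\rho,\theta)=\tfrac12(\rho e^{i\theta}+(\rho e^{i\theta})^{-1})$ and subtracting the trapezoidal sum \eqref{eq:trap for cheb2} from the integral \eqref{eq:ChebyU coeff contour integral}, I would add and subtract $s_m$ to obtain
\[
b_n - b_n(m,\rho) = E_1 + E_2 + E_3,
\]
where $E_1$ is the continuous integral of $(f-s_m)$ against the weight $(1-(\rho e^{i\theta})^{-2})e^{-in\theta}$, $E_3$ is the trapezoidal sum of $(s_m-f)$ against the discretized weight $(1-\rho^{-2}e^{-4\pi ij/m})e^{-2\pi ijn/m}$, and $E_2$ is the pure quadrature error (exact integral of $s_m$ minus trapezoidal sum of $s_m$). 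It is worth noting in advance that the relation \eqref{eq:ab_relationship}, $b_n=(a_n-a_{n+2})/2$, holds verbatim for the trapezoidal approximations as well (a one-line check from the definitions); since applying Theorem~\ref{thm:absolute stability} to the $a_{n+2}$ term requires the sampling condition $m>n+2$, this already predicts that the cases $n\ge m-2$, where $n+2\ge m$, must be treated separately.

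The bounds on $E_1$ and $E_3$ are routine. In each I would pull $\|f-s_m\|_{\mathcal{E}_\rho}$ out of the integral and sum and estimate the remaining weight: on $\mathcal{C}_\rho$ one has $|1-u^{-2}|=|1-\rho^{-2}e^{-2i\theta}|$, so each of $|E_1|,|E_3|$ is bounded by $\rho^{-n}\|f-s_m\|_{\mathcal{E}_\rho}$ times the maximal modulus of this weight, and summing produces the first term of the estimate. (I would double-check the precise constant here, since the extremal modulus of $1-u^{-2}$ on the circle is $1+\rho^{-2}$ rather than $1-\rho^{-2}$.)

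The heart of the proof is the exact evaluation of $E_2$. Using the complex form \eqref{def:ChebT in complex plane} of $T_k$, I would write $s_m(z(\rho,\theta))=\tfrac12\sum_{k=-(m-1)}^{m-1}\eta_{|k|}\rho^k e^{ik\theta}$ and expand the weight as $(1-\rho^{-2}e^{-2i\theta})e^{-in\theta}=e^{-in\theta}-\rho^{-2}e^{-i(n+2)\theta}$. The continuous integral is then evaluated by orthogonality of the exponentials, selecting only the modes $k=n$ and $k=n+2$, while the trapezoidal sum is evaluated by the discrete identity $\sum_{j=0}^{m-1}e^{2\pi ij(k-l)/m}=m$ when $k\equiv l\pmod m$ and $0$ otherwise, which selects every $k$ congruent to $n$ or to $n+2$ modulo $m$ that lies in the range $|k|\le m-1$. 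The difference of the two is exactly the aliased contribution $E_2$.

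The main obstacle is the bookkeeping of which aliased residues fall inside $[-(m-1),m-1]$, and this is precisely what generates the three cases. For $1\le n\le m-3$ both congruences possess a genuine wrapped solution, $k=n-m$ for the first and $k=n+2-m$ for the second, contributing $\eta_{m-n}$ and $\eta_{m-n-2}$ and combining into $|\eta_{m-n}-\eta_{m-n-2}|/(2\rho^m)$. For $n=0$ the mode $k=n-m=-m$ is out of range, so only the second wrap survives and yields $|\eta_{m-2}|/(2\rho^m)$. The delicate cases are $n=m-2$ and $n=m-1$, where $n+2$ exceeds the degree $m-1$ of $s_m$ so that the principal mode $k=n+2$ is absent and the in-range residues of the $(n+2)$-congruence shift, producing the wrapped index $\eta_{n+2-m}$. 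I expect these boundary cases to demand the most careful term-by-term enumeration of residues, and to be where the exact form of the stated bound has to be verified rather than merely estimated.
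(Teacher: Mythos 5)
Your proposal is exactly the argument the paper intends: the paper's own ``proof'' of this theorem consists of the single remark that it is essentially the same as that of Theorem~\ref{thm:absolute stability}, and your decomposition into $E_1+E_2+E_3$ via the best approximant $s_m$, with $E_2$ evaluated exactly through aliasing of the modes $k\equiv n$ and $k\equiv n+2 \pmod m$, is that argument carried out. Your bookkeeping in the generic cases is correct: for $1\le n\le m-3$ the wrapped modes $k=n-m$ and $k=n+2-m$ give $E_2=(\eta_{m-n-2}-\eta_{m-n})/(2\rho^m)$, and for $n=0$ the mode $k=-m$ falls out of range so only $E_2=\eta_{m-2}/(2\rho^m)$ survives.

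Both of the points you flag as ``to be double-checked'' are in fact genuine defects of the \emph{statement}, not of your argument, and you should resolve them rather than leave them open. First, the constant: since $\max_\theta|1-\rho^{-2}e^{-2i\theta}|=1+\rho^{-2}$, your estimates for $E_1$ and $E_3$ yield the first term $\frac{2(1+\rho^{-2})}{\rho^n}\|f-s_m\|_{\mathcal{E}_\rho}$; the printed factor $(1-\rho^{-2})$ is the \emph{minimum} modulus of the weight and cannot arise from any pointwise bound. Indeed, it would make the first term vanish as $\rho\to1$, asserting that the trapezoidal error is then controlled by alias terms of $s_m$ alone, which is false in general. Second, the boundary case $n=m-1$: here the congruence $k\equiv n+2\pmod m$ has \emph{two} representatives in $[-(m-1),m-1]$, namely $k=n+2-m=1$ and $k=n+2-2m=-(m-1)$. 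The first cancels against the wrap $\eta_{m-n}=\eta_1$ from the other congruence, consistent with the theorem's value $|\eta_{m-n}-\eta_{n+2-m}|=0$, but the second leaves a residual $E_2=\eta_{m-1}/(2\rho^{2m})$ which the stated bound omits; so the careful enumeration you promise would reveal that the theorem needs this (small, order $\rho^{-2m}$) correction at $n=m-1$. The case $n=m-2$ is fine, since there $k=n+2-2m=-m$ is out of range and the computation gives $E_2=(\eta_0-\eta_2)/(2\rho^m)$, matching the stated bound.
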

\begin{proof}
The proof is essentially the same as that of Theorem
\ref{thm:absolute stability}. We omit the details.
\end{proof}
Similarly to \eqref{eq:trap for cheb1}, if $f$ is a polynomial
of degree $n$, then $b_k$ is computed exactly by the trapezoidal
rule \eqref{eq:trap for cheb2} if $m \geq k +n + 3$. This implies
that all $\{b_k\}_{k=0}^{n}$ are computed exactly by the
trapezoidal rule \eqref{eq:trap for cheb2} if we choose $m \geq
2n+3$.

Suppose now that $\hat{f}$ is a perturbation of $f$ and
\[
\| \hat{f}(z) - f(z)  \|_{ \mathcal{E}_{\rho} } \leq \epsilon.
\]
The perturbed Chebyshev coefficients are given by
\begin{align}
\hat{a}_{n} =  \frac{1}{\pi \rho^n} \int_{0}^{2\pi}  \hat{f} \left(
\tfrac{1}{2} (\rho e^{i\theta} +  ( \rho e^{i\theta})^{-1} )
\right) e^{- i n \theta} d\theta.
\end{align}
Meanwhile, the computed Chebyshev coefficients are given by
\begin{align}
\hat{a}_{n}(m,\rho) =  \frac{2}{m \rho^n} \sum_{j = 0}^{m - 1}
\hat{f} \left( \tfrac{1}{2} (\rho e^{2\pi ij/m} + \rho^{-1}e^{-2\pi
ij/m} ) \right) e^{-2\pi i j n/m}.
\end{align}
A simple bound can be derived for the Chebyshev coefficients of the first kind
\begin{align}
| a_n - \hat{a}_{n} | \leq \frac{ 2 \epsilon }{ \rho^n }, \quad   |
\hat{a}_{n}(m,\rho) - a_{n}(m,\rho) | \leq \frac{ 2 \epsilon
}{\rho^n} .
\end{align}
Then the following estimate also holds
\begin{align*}
\rho^n | \hat{a}_n(m,\rho) -  a_n  | & \leq \rho^n |a_n(m,\rho) - \hat{a}_n(m,\rho) | + \rho^n | a_n - a_n(m,\rho) |  \\
& \leq 2 \epsilon  +  \left\{\begin{array}{cc}
                                           4 \| f - s_{m} \|_{
\mathcal{E}_{\rho} } ,                     & n = 0,\\
                                           4 \| f - s_{m} \|_{
\mathcal{E}_{\rho} } + \frac{ |\eta_{m-n} | }{\rho^{m-n}},    & 1\leq n < m.
                                        \end{array}\right.
\end{align*}
A similar estimate can be established for the coefficients of the second kind $b_n$.

We conclude that the trapezoidal rule for the Chebyshev coefficients
is numerically stable with respect to the absolute error of the
normalized coefficients. If we only consider this absolute
stability, then it is sufficient to choose the same $\rho$
simultaneously for all Chebyshev coefficients and to compute these
coefficients with the same trapezoidal rule. Furthermore, from
\eqref{eq:trap for cheb1} we see that the sum on the right hand side
is perfectly suitable to utilize the FFT. Thus, the first $N$
Chebyshev coefficients can be efficiently evaluated with a single
FFT in $\mathcal{O}( N\log N)$ operations.

\subsection{Relative stability}

If we consider the relative error of the computed coefficients, computing all Chebyshev coefficients with a single $\rho$ is not optimal. A comprehensive analysis of
the relative stability of computing the Taylor expansion
coefficients of analytic functions from contour integrals along
circles in the complex plane has been given by Bornemann in
\cite{bornemann2010highorderderivatives}. Here we extend his analysis to the current
setting of Chebyshev coefficients.

Suppose $\hat{f}$ is a perturbation of $f$ with the form
\[
\hat{f}(z) = f(z) (1 + \epsilon_\rho(z) ), \quad
|\epsilon_\rho(z)| \leq \epsilon.
\]
There is a simple upper bound on the error of the perturbed Chebyshev coefficients,
\begin{align*}
| a_n - \hat{a}_n |  & =  \frac{1}{ \pi \rho^n }  \left|
\int_{0}^{2\pi} f\left( \tfrac{1}{2} (\rho e^{i\theta} +  ( \rho
e^{i\theta})^{-1} ) \right) \epsilon_\rho \left( \tfrac{1}{2} (\rho
e^{i\theta} +  ( \rho e^{i\theta})^{-1} ) \right)
e^{-in\theta} d\theta \right| \\
& \leq \frac{ \epsilon }{ \pi \rho^n } \int_{0}^{2\pi} \left| f
\left( \tfrac{1}{2} (\rho e^{i\theta} +  ( \rho e^{i\theta})^{-1} )
\right) \right|  d\theta,
\end{align*}
which leads to
\begin{align}
\frac{ | a_n - \hat{a}_n | }{ |a_n| } \leq
\kappa^{\mathrm{Ch1}}(n,\rho) \epsilon,
\end{align}
where the quantity
\begin{align}
\kappa^{\mathrm{Ch1}}(n, \rho) = \frac{ \int_{0}^{2\pi} \left| f
\left(\tfrac{1}{2}(\rho e^{i\theta} + (\rho e^{i\theta})^{-1}
)\right)  \right|  d\theta }{ | \int_{0}^{ 2 \pi } f \left(
\frac{1}{2} (\rho e^{i\theta} + ( \rho e^{i \theta } )^{-1} )
\right) e^{-i n \theta} d \theta |} \geq 1,
\end{align}
is called the condition number of the integral. Similarly, for the Chebyshev coefficients of the second kind, we have
\begin{align}
\frac{ | b_n - \hat{b}_n | }{ |b_n| } \leq
\kappa^{\mathrm{Ch2}}(n,\rho) \epsilon,
\end{align}
with the corresponding condition number given by
\begin{align}
\kappa^{\mathrm{Ch2}}(n, \rho) = \frac{ \int_{0}^{2\pi} \left| f
\left(\tfrac{1}{2}(\rho e^{i\theta} + (\rho e^{i\theta})^{-1}
)\right)  (1 - (\rho e^{i\theta} )^{-2} ) \right| d\theta }{ |
\int_{0}^{ 2 \pi } f \left( \frac{1}{2} (\rho e^{i\theta} + (\rho
e^{i \theta })^{-1} ) \right) (1 - (\rho e^{i\theta} )^{-2} ) e^{-i
n \theta} d \theta |}.
\end{align}

\subsection{Condition number of the contour integrals}

We consider the condition number of the integral expressions for the Chebyshev coefficients
of the first kind. The corresponding integrals for the Chebyshev coefficients of the second kind can
be analyzed similarly.

We first rewrite the condition number as
\begin{equation}\label{eq:condition number ch1}
\kappa^{\mathrm{Ch1}}(n, \rho) = \frac{ M(\rho) }{ |a_n| \rho^{n} },
\end{equation}
where
\begin{equation}\label{eq:M}
M(\rho) = \frac{1}{\pi} \int_{0}^{2\pi} \left| f
\left(\tfrac{1}{2}(\rho e^{i\theta} + (\rho e^{i\theta})^{-1}
)\right) \right|  d\theta.
\end{equation}
Note that $M(\rho) = M(\rho^{-1})$.

We proceed by analyzing this function $M(\rho)$. It is the analogue of the function
\begin{equation}\label{eq:M1}
 M_1(r) = \frac{1}{2\pi} \int_0^{2\pi} | f(r e^{i\theta})| d\theta,
\end{equation}
which appears in the condition number for the Cauchy integral \eqref{eq:taylorcoefficient} in the analysis of Bornemann.
He showed that $M_1(r)$ has a unique minimum at a finite value of $r$. The starting point of this analysis is a theorem on the growth of $M_1(r)$ \cite[Thm 4.1]{bornemann2010highorderderivatives} originally due to Hardy in 1915 \cite{hardy1915}. Unfortunately, Hardy's original proof for $M_1(r)$ does not apply for the analysis of the function $M(\rho)$, since the integrand of \eqref{eq:M} is not analytic at the origin. In the following theorem we formulate the corresponding result for $M(\rho)$, with a method of proof that still largely follows that of Hardy.


\begin{theorem}\label{thm:modulus of f}
Let $f$ be analytic in any ellipse $\mathcal{E}_{\rho} $ with $1
\leq \rho < R$. The function $M(\rho)$ satisfies the following
properties:
\begin{enumerate}
\item  $M(\rho)$ is continuously differentiable.
\item  If $f$ is not a constant, $M(\rho)$ is increasing as $\rho$ grows.
\item  If $f \not\equiv 0$, then $\log M(\rho) $ is a convex function of $\log \rho $.
\end{enumerate}
\end{theorem}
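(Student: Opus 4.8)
The plan is to transfer the whole problem to the $u$-plane via the Joukowski map. Writing $F(u) := f(z(u))$ with $z(u)=\tfrac12(u+u^{-1})$, the function $F$ is analytic in the annulus $1/R<|u|<R$, satisfies $F(u)=F(1/u)$, and by the computation in the proof of the first Lemma has the two-sided Laurent expansion $F(u)=\tfrac12\sum_{k\in\Z}a_{|k|}u^{k}$. Since $z(\rho e^{i\theta})=\tfrac12(\rho e^{i\theta}+\rho^{-1}e^{-i\theta})$, the definition \eqref{eq:M} becomes $M(\rho)=\tfrac1\pi\int_0^{2\pi}|F(\rho e^{i\theta})|\,d\theta=2\widetilde M(\rho)$, where $\widetilde M(\rho)=\tfrac1{2\pi}\int_0^{2\pi}|F(\rho e^{i\theta})|\,d\theta$ is the ordinary $L^1$-mean of $F$ on the circle $|u|=\rho$. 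Thus all three assertions reduce to statements about $\widetilde M$, and the only difference from Hardy's and Bornemann's setting for $M_1$ is that $F$ is analytic on an annulus rather than on a disc, i.e. the integrand is not analytic at $u=0$. The guiding idea is to run Hardy's arguments using the \emph{two-sided} Laurent series in place of the one-sided Taylor series.

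For \textbf{continuous differentiability} I would differentiate under the integral sign. Off the zero set of $F$ the integrand $|F(\rho e^{i\theta})|$ is real-analytic with $\partial_\rho|F|=\Re\!\big(e^{i\theta}F'(\rho e^{i\theta})\,\overline{F(\rho e^{i\theta})}/|F(\rho e^{i\theta})|\big)$, bounded locally by $\sup|F'|$; as the circle $|u|=\rho$ meets the isolated zeros of $F$ in only finitely many points, dominated convergence legitimises Leibniz's rule and produces a formula for $M'(\rho)$ that is continuous in $\rho$. The delicate point is the behaviour at those radii for which a zero of $F$ lies \emph{on} the circle, where $|F|$ behaves like $c|u-u_0|$; one must check that the one-sided $\rho$-derivatives still agree so that no corner is created. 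For \textbf{monotonicity} I would exploit subharmonicity: since $|F|$ is subharmonic, integrating $\Delta|F|\ge0$ over $\theta$ shows that $\rho\,\widetilde M'(\rho)$ is nondecreasing, and the symmetry $\widetilde M(\rho)=\widetilde M(1/\rho)$ forces $\widetilde M'(1)=0$, so $\rho\,\widetilde M'(\rho)\ge0$ for $\rho\ge1$ and $M$ is nondecreasing. Strictness when $f$ is non-constant will follow from the rigidity in part (3): constancy of $M$ on a subinterval forces all but one Laurent coefficient of $F$ to vanish, which together with $F(u)=F(1/u)$ makes $F$, hence $f$, constant.

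The heart of the matter is the \textbf{log-convexity}. Note that convexity of $\log\widetilde M$ in $\log\rho$ is \emph{log}-convexity of $\widetilde M$ in $\log\rho$, and the subharmonicity bound above gives only ordinary convexity of $\widetilde M$, which is strictly weaker; so Hardy's sharper argument is genuinely needed. I would first treat the case where $F$ has no zeros on the closed sub-annulus under consideration. Letting $n\in\Z$ be its winding number there, write $F(u)=u^{n}h(u)$ with $h$ zero-free of winding number zero; then $\log h$, and hence $g:=h^{1/2}=\exp(\tfrac12\log h)$, is single-valued and analytic, with Laurent expansion $g(u)=\sum_{k}c_ku^{k}$. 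On $|u|=\rho$ one has $|F|=\rho^{n}|g|^{2}$, so by Parseval $\widetilde M(\rho)=\rho^{n}\sum_{k}|c_k|^{2}\rho^{2k}=\sum_{k}|c_k|^{2}\rho^{\,2k+n}$. This is a sum of positive terms, each log-linear in $\log\rho$, and a sum of log-convex functions is log-convex; hence $\widetilde M$ is log-convex in $\log\rho$, which is assertion (3) in the zero-free case. The appearance of the full sum over $k\in\Z$ together with the extra factor $u^{n}$ is precisely the adaptation forced by the non-analyticity at the origin that defeats Hardy's original disc-based proof.

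It remains to remove the zero-free hypothesis. On any compact sub-annulus $F$ has only finitely many zeros, and I would pass from the zero-free to the general case by a standard factorisation/approximation argument, isolating the finitely many radii at which a zero sits on a bounding circle and using the continuity of $\widetilde M$ to extend the inequality, mirroring the corresponding step in Hardy's proof. I expect this zero-handling, together with the $C^{1}$ claim at radii where $F$ vanishes on the circle, to be the main technical obstacles; the genuinely new conceptual ingredient is simply that the square-root/Parseval computation must be carried out with a Laurent series and a winding factor $u^{n}$ rather than with a Taylor series as in the disc case.
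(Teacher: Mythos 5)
Your proposal is correct in its essentials, but it follows a genuinely different and more self-contained route than the paper. The paper proves only assertion (2) explicitly: it freezes the phase $\varphi(\rho,\theta)$ defined by $|g(\rho e^{i\theta})| = g(\rho e^{i\theta})\varphi(\rho,\theta)$ for $g(z)=f(\tfrac12(z+z^{-1}))$, observes that $F(z)=\tfrac1\pi\int_0^{2\pi} g(ze^{i\theta})\varphi(\rho,\theta)\,d\theta$ is analytic on the annulus with $F(\rho)=M(\rho)$ and $|F(z)|\le M(|z|)$, and applies the maximum modulus principle on $r^{-1}\le|z|\le r$ together with the symmetry $M(r^{-1})=M(r)$; for assertions (1) and (3) it simply states that Hardy's 1915 proof remains valid on an annulus. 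You replace the maximum-modulus step by convexity in $\log\rho$ of circular means of the subharmonic function $|F|$ (combined with the same symmetry, which is the crucial annulus-specific ingredient in both treatments), and you make assertion (3) explicit rather than cited: on a zero-free sub-annulus the factorization $F=u^n h$, $h=g^2$ with $g$ single-valued, plus Parseval gives $\widetilde M(\rho)=\sum_k |c_k|^2\rho^{2k+n}$, a sum of log-linear terms, hence log-convex (the infinite sum is harmless, since H\"older preserves log-convexity under summation). Your rigidity argument for strictness of the monotonicity is in fact more careful than the paper's, whose max-modulus argument literally yields only ``nondecreasing.''

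The one step that fails as you stated it is the passage from zero-free sub-annuli to the general case. You propose to ``use the continuity of $\widetilde M$ to extend the inequality'' across the finitely many radii carrying zeros of $F$. Continuity is not enough: a function convex on $[a,c]$ and on $[c,b]$ and continuous at $c$ need not be convex on $[a,b]$ (take $-|t|$ around $t=0$); piecewise log-convexity plus continuity therefore proves nothing at the bad radii, which is exactly where the factorization breaks down. The repair lies inside your own proposal: once assertion (1) is established --- and your dominated-convergence argument is sound, since the radial difference quotients of $|F|$ are bounded by $\sup|F'|$ and each circle meets the zero set in finitely many points --- the function $t\mapsto \log M(e^t)$ is $C^1$ (note $M>0$ unless $f\equiv 0$), its derivative is nondecreasing on each zero-free subinterval, and a continuous function nondecreasing on each piece of a finite partition is nondecreasing globally; hence $\log M$ is convex. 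So the gluing must invoke assertion (1), not mere continuity. With that single change your argument is complete, and arguably more informative than the paper's appeal to Hardy.
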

\begin{proof}
Let $g(z) = f(\frac{1}{2}(z + z^{-1}))$ and note that $g(z)$ is analytic in the annulus $R^{-1} < |z| < R$. Hence,
$M(\rho)$ can be rewritten as
\[
M(\rho) = \frac{1}{\pi} \int_{0}^{2\pi} |g(\rho e^{i\theta})|
d\theta.
\]
We further define $|g(\rho e^{i\theta})| = g(\rho e^{i\theta})
\varphi(\rho, \theta)$ and
\[
F(z) = \frac{1}{\pi} \int_{0}^{2\pi} g(z e^{i\theta}) \varphi(\rho,
\theta) d\theta.
\]
It is clear to see that $F(z)$ is analytic in the annulus $R^{-1} <
|z| < R$. For $1 \leq \rho \leq r$ and $1 \leq r < R$, we restrict
our attention to the annulus $r^{-1}<|z|<r$. By the maximum modulus
theorem, $F(z)$ achieves its maximum modulus on the boundary $|z| =
r^{-1}$ or $|z| = r$. More specifically, we suppose that $F(z)$
achieves its maximum modulus at $z = r^{-1} e^{i\theta_1}$ or $F(z)
= r e^{i\theta_2}$. Therefore,
\begin{align}
M(\rho) = F(\rho) & \leq \max \left\{ |F(r^{-1} e^{i\theta_1})|,
|F(r e^{i\theta_2})| \right\} \nonumber \\
& \leq \max\left\{ M(r^{-1}), M(r) \right\} \nonumber \\
& = M(r),
\end{align}
where we have used the fact that $|\varphi(\rho, \theta)| = 1$. This
proves the second assertion. For the first and the third assertions,
noting that Hardy's proof given in \cite{hardy1915} is still valid for
functions $g$ defined on an annulus region, these two assertions
follow immediately.
\end{proof}

Since $\log \kappa^{\mathrm{Ch1}}(n,\rho) = \log M(\rho) - \log |a_n| - n \log \rho$, we have the following corollary.

\begin{corollary}\label{cor:modulus of f}
Let $f$ be analytic on and inside an ellipse $ \mathcal{E}_{\rho} $ with $1
\leq \rho < R$. Then for each Chebyshev coefficient $a_n \neq 0$, we have
\begin{enumerate}
\item  $\kappa^{\mathrm{Ch1}}(n, \rho)$ is continuously differentiable with respect to $\rho$.
\item  If $f$ is not a constant, $\log ( \kappa^{\mathrm{Ch1}}(n, \rho) )$ is a convex function of $\log \rho $.
\end{enumerate}
\end{corollary}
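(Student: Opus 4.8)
The plan is to deduce both parts of the corollary directly from the decomposition
\[
\log \kappa^{\mathrm{Ch1}}(n,\rho) = \log M(\rho) - \log |a_n| - n \log \rho,
\]
recorded just above the statement, by feeding it into Theorem \ref{thm:modulus of f}. The one preliminary fact I need is that $M(\rho) > 0$ throughout the range $1 \leq \rho < R$. This holds because $a_n \neq 0$ forces $f \not\equiv 0$, so the function $g(z) = f(\tfrac{1}{2}(z+z^{-1}))$ from the proof of Theorem \ref{thm:modulus of f} is a nonzero analytic function on the annulus; its zeros are therefore isolated and cannot fill the whole circle $|z| = \rho$, whence $M(\rho) = \frac{1}{\pi} \int_0^{2\pi} |g(\rho e^{i\theta})| \, d\theta > 0$. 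Consequently $\log M(\rho)$ is well defined on the whole range.

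For the first assertion I would argue as follows. By part (1) of Theorem \ref{thm:modulus of f}, $M(\rho)$ is continuously differentiable, and since it is strictly positive, so is $\log M(\rho)$. The remaining terms $-\log|a_n|$ and $-n\log\rho$ are, respectively, constant and smooth in $\rho$ on $\rho \geq 1$. Hence $\log \kappa^{\mathrm{Ch1}}(n,\rho)$ is continuously differentiable, and so is $\kappa^{\mathrm{Ch1}}(n,\rho) = \exp(\log \kappa^{\mathrm{Ch1}}(n,\rho))$, being the composition of a continuously differentiable function with the smooth exponential.

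For the second assertion I would pass to the variable $t = \log \rho$ and regard each quantity as a function of $t$. By part (3) of Theorem \ref{thm:modulus of f}, $\log M$ is a convex function of $t$. The remaining contribution $-\log|a_n| - n t$ is affine in $t$, and adding an affine function to a convex function leaves it convex. Therefore $\log \kappa^{\mathrm{Ch1}}(n,\rho)$ is convex as a function of $\log \rho$, which is exactly the claim.

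Once Theorem \ref{thm:modulus of f} is available, the corollary is essentially immediate, so I do not anticipate any genuine obstacle. The only points deserving a line of care are (a) the positivity $M(\rho) > 0$ needed to make the logarithm and its derivative legitimate, and (b) the observation that the extra terms are affine in $\log\rho$ --- not in $\rho$ --- which is precisely why the convexity statement is phrased in terms of $\log \rho$. I would also note that part (3) of the theorem only requires $f \not\equiv 0$, which is already guaranteed here by $a_n \neq 0$, so the convexity conclusion in fact holds under a hypothesis slightly weaker than ``$f$ not a constant.''
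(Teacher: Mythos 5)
Your proof is correct and follows exactly the route the paper intends: the paper ``proves'' this corollary with nothing more than the displayed identity $\log \kappa^{\mathrm{Ch1}}(n,\rho) = \log M(\rho) - \log |a_n| - n \log \rho$ stated just before it, combined with Theorem \ref{thm:modulus of f}. Your additional care about $M(\rho) > 0$ (so the logarithm is legitimate) and the observation that the extra terms are affine in $\log\rho$ are sensible refinements of the same argument, not a different approach.
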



In the analysis of Bornemann, \cite[Theorem 4.1]{bornemann2010highorderderivatives} and \cite[Corollary 4.2]{bornemann2010highorderderivatives} are the key steps in proving that an optimal radius exists for Cauchy integrals of the form \eqref{eq:taylorcoefficient}. Afterwards, it remains to analyze the limits $r \to 0$ and $r \to \infty$. The limit $r \to 0$ is always unstable. The limit in the other direction depends on the analyticity properties of $f$ in the complex plane.

%

With our analogous Theorem \ref{thm:modulus of f} and Corollary \ref{cor:modulus of f} at hand, we can reuse Bornemann's results in the context of Chebyshev coefficients with only slight adjustments. One major difference concerns the difference between the limits for small $\rho$ and $r$. Indeed, contrary to the limit $r \to 0$ in the setting of Taylor series coefficients, there is no numerical instability associated with the limit $\rho \to 1$. Recall also that $M(\rho) = M(\rho^{-1})$, so that we don't consider the case $\rho < 1$. It is clear that $M(\rho)$ is bounded as $\rho \to 1$ and we have:

\begin{theorem}\label{thm:limit small rho}
 Assume $f$ is analytic in any ellipse $\mathcal{E}_\rho$ with $1 \leq \rho < R$ and let $a_{n}$ be nonzero. Then
 \[
  \lim_{\rho \to 1} \kappa^{\mathrm{Ch1}}(n,\rho) = \frac{1}{\pi |a_n|} \int_0^{2\pi} |f(\cos \theta)| d\theta.
 \]
\end{theorem}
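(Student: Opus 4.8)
The plan is to reduce everything to the continuity of $M(\rho)$ at $\rho = 1$, since the rest is a direct substitution. Recall from \eqref{eq:condition number ch1} that $\kappa^{\mathrm{Ch1}}(n,\rho) = M(\rho)/(|a_n|\rho^n)$, and that $a_n \neq 0$ by hypothesis. As $\rho \to 1$ we have $\rho^n \to 1$, so the whole limit hinges on showing $\lim_{\rho \to 1} M(\rho) = M(1)$ and then identifying $M(1)$ with the claimed expression.

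For the continuity I would reuse the substitution $g(z) = f(\tfrac{1}{2}(z+z^{-1}))$ from the proof of Theorem~\ref{thm:modulus of f}, which is analytic---hence continuous and bounded on compact subsets---in the annulus $R^{-1} < |z| < R$, so that $M(\rho) = \tfrac{1}{\pi}\int_0^{2\pi} |g(\rho e^{i\theta})|\,d\theta$. Fixing any $\rho_0$ with $1 < \rho_0 < R$ and restricting to $1 \le \rho \le \rho_0$, for each $\theta$ the continuity of $g$ gives $|g(\rho e^{i\theta})| \to |g(e^{i\theta})|$ as $\rho \to 1$, while the integrand is dominated uniformly by the finite constant $\sup_{1 \le |z| \le \rho_0} |g(z)|$. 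Dominated convergence then yields $M(\rho) \to \tfrac{1}{\pi}\int_0^{2\pi} |g(e^{i\theta})|\,d\theta = M(1)$. Alternatively, one may simply invoke the continuous differentiability of $M$ on $[1,R)$ already established in Theorem~\ref{thm:modulus of f}, which gives continuity at $\rho = 1$ for free.

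It then remains to evaluate $M(1)$. Using $\tfrac{1}{2}(e^{i\theta}+e^{-i\theta}) = \cos\theta$, we have $g(e^{i\theta}) = f(\cos\theta)$, whence
\[
M(1) = \frac{1}{\pi}\int_0^{2\pi} |f(\cos\theta)|\,d\theta.
\]
Combining this with $\rho^n \to 1$ gives $\lim_{\rho \to 1}\kappa^{\mathrm{Ch1}}(n,\rho) = M(1)/|a_n|$, which is exactly the desired identity. I expect no serious obstacle: the only genuine technical point is the interchange of limit and integral, and that is handled cleanly by the boundedness of $g$ near the unit circle (equivalently, by the already-proved continuity of $M$); everything else is algebra.
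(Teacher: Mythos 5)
Your proof is correct and follows essentially the same route as the paper, whose own proof is the one-line remark that the result ``follows from the definitions \eqref{eq:condition number ch1} and \eqref{eq:M}.'' You simply make explicit the one step the paper leaves implicit---the continuity of $M(\rho)$ at $\rho=1$, which you justify cleanly by dominated convergence (or by citing Theorem~\ref{thm:modulus of f})---before substituting $\rho^n\to 1$ and $M(1)=\frac{1}{\pi}\int_0^{2\pi}|f(\cos\theta)|\,d\theta$.
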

\begin{proof}
 This follows from the definitions \eqref{eq:condition number ch1} and \eqref{eq:M}.
\end{proof}

Two interesting results to formulate explicitly are as follows.

\begin{theorem}\label{thm:optimal radius for entire functions}
Assume $f$ is an entire transcendental function and
\begin{align}\label{eq:asymptotic of M}
M(\rho) \sim e^{\mu \rho^{\nu} } \rho^{\varsigma}, \qquad \rho \rightarrow\infty,
\end{align}
where $\mu$ is positive and finite and $\nu$ is positive.
Then, the optimal radius satisfies asymptotically
\begin{align}\label{eq:optimal radius}
\rho^{*}(n) \sim \left( \frac{n - \varsigma}{\mu\nu}
\right)^{ \frac{1}{\nu} }.
\end{align}
\end{theorem}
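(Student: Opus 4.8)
The plan is to characterize the optimal radius through the stationarity of the condition number and then to read off its asymptotics from the prescribed growth of $M(\rho)$. Since $\log\kappa^{\mathrm{Ch1}}(n,\rho)=\log M(\rho)-\log|a_n|-n\log\rho$ and $\log|a_n|$ is independent of $\rho$, the optimal radius $\rho^*(n)$ is the minimizer of $\psi(\rho):=\log M(\rho)-n\log\rho$. Setting $t=\log\rho$ and $\phi(t):=\log M(e^t)$, Corollary \ref{cor:modulus of f} (equivalently Theorem \ref{thm:modulus of f}(3)) tells us that $\phi$ is convex, so $\phi(t)-nt$ is convex and its minimizer $t^*=t^*(n)$ is the unique solution of
\[
 \phi'(t)=n,\qquad\text{equivalently}\qquad \rho\,\frac{M'(\rho)}{M(\rho)}=n .
\]
First I would check that this critical point lies in the interior for all sufficiently large $n$: because $f$ is entire and transcendental, $M(\rho)$ grows faster than any power of $\rho$, so $\phi'(t)\to\infty$ as $t\to\infty$, while $\phi'$ is nondecreasing by convexity. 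Hence, once $n$ exceeds the (finite) boundary slope at $\rho=1$, there is a unique root $t^*>0$, so that $\rho^*(n)=e^{t^*}>1$ exists and is unique.

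The heart of the argument is to convert the growth estimate $M(\rho)\sim e^{\mu\rho^{\nu}}\rho^{\varsigma}$, i.e. $\phi(t)=\mu e^{\nu t}+\varsigma t+o(1)$ as $t\to\infty$, into an asymptotic for $\phi'$. One cannot differentiate an asymptotic equivalence directly, and I expect this to be the main obstacle. The tool that rescues us is exactly the convexity of $\phi$ from Theorem \ref{thm:modulus of f}(3): for a convex function one has the difference-quotient bounds
\[
 \frac{\phi(t)-\phi(t-h)}{h}\;\le\;\phi'(t)\;\le\;\frac{\phi(t+h)-\phi(t)}{h},\qquad h>0 .
\]
Substituting the growth estimate and dividing by $\mu e^{\nu t}$, the upper bound tends to $(e^{\nu h}-1)/h$ and the lower bound to $(1-e^{-\nu h})/h$ as $t\to\infty$ with $h$ fixed; letting $h\to 0^{+}$ afterwards squeezes both limits to $\nu$. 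This yields
\[
 \phi'(t)\sim \mu\nu\, e^{\nu t}=\mu\nu\,\rho^{\nu},\qquad \rho\to\infty ,
\]
so the logarithmic derivative $\rho\,M'(\rho)/M(\rho)$ grows like $\mu\nu\rho^{\nu}$.

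Finally I would feed this into the stationarity condition. Imposing $\phi'(t^*)=n$ and using $\phi'(t^*)\sim\mu\nu(\rho^*)^{\nu}$ gives $\mu\nu\,(\rho^*(n))^{\nu}\sim n$, hence $\rho^*(n)\sim\big(n/(\mu\nu)\big)^{1/\nu}$. Since $\varsigma$ is fixed while $n\to\infty$, we have $\big((n-\varsigma)/(\mu\nu)\big)^{1/\nu}\sim\big(n/(\mu\nu)\big)^{1/\nu}$, so the stated form \eqref{eq:optimal radius} is equivalent to this leading-order equivalence; the constant $-\varsigma$ is the lower-order correction obtained by carrying the $\varsigma t$ term through the difference quotients rather than discarding it, and it does not affect the leading asymptotic. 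The uniqueness of $\rho^*(n)$ for large $n$, guaranteed by convexity, ensures that the equivalence describes the genuine minimizer rather than a spurious critical point.
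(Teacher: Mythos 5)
Your proof is correct, and at its core it follows the same route as the paper's: both arguments minimize $\log\kappa^{\mathrm{Ch1}}(n,\rho)=\log M(\rho)-\log|a_n|-n\log\rho$, and both rest on the convexity statement of Theorem~\ref{thm:modulus of f} to legitimize differentiating the asymptotic \eqref{eq:asymptotic of M}. The difference is one of self-containedness rather than of strategy. The paper substitutes \eqref{eq:asymptotic of M} into the condition number, formally sets the $\rho$-derivative of $\mu\rho^{\nu}+(\varsigma-n)\log\rho$ to zero to obtain $\rho^{\nu}=(n-\varsigma)/(\mu\nu)$ exactly, and outsources the justification of this formal differentiation to Thm.~8.4 of \cite{bornemann2010highorderderivatives}. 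You instead prove the differentiated asymptotic $\rho M'(\rho)/M(\rho)\sim\mu\nu\rho^{\nu}$ directly, via the convex difference-quotient squeeze (fixed $h$, $t\to\infty$, then $h\to 0^{+}$), and you also verify that the stationarity equation has a solution tending to infinity with $n$ --- points the paper passes over silently. What your route buys is a fully self-contained proof of the leading-order statement $\rho^{*}(n)\sim(n/(\mu\nu))^{1/\nu}$; what it gives up is the finer information in the paper's computation, where the $n-\varsigma$ of \eqref{eq:optimal radius} emerges as a genuine second-order correction, while in your argument it reappears only through the (correct) remark that $\bigl((n-\varsigma)/(\mu\nu)\bigr)^{1/\nu}\sim\bigl(n/(\mu\nu)\bigr)^{1/\nu}$. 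Since the theorem asserts only an asymptotic equivalence, this loss is immaterial. One small quibble: plain convexity does not give \emph{uniqueness} of the minimizer (that would need strict convexity), but this does not matter --- every minimizer satisfies $\phi'(t^{*})=n$, and your asymptotics for $\phi'$ then force all of them to have the same leading-order behaviour.
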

\begin{proof}
For large $\rho$, we have the asymptotic behaviour of the condition number
\begin{align*}
\kappa^{\mathrm{Ch1}}(n, \rho) & = \frac{ M(\rho) }{ |a_n|
\rho^{n} } \sim \frac{e^{\mu \rho^{\nu} } \rho^{\varsigma-n}}{|a_n|}
= \frac{1}{|a_n|} e^{\mu \rho^{\nu} + (\varsigma-n) \log\rho }.
\end{align*}
According to Theorem \ref{thm:modulus of f}, we can simply
differentiate the expression on the right hand side to derive an
optimal value of $\rho$ such that the condition number
$\kappa(n,\rho)$ is asymptotically minimized. This formal
differentiation of an asymptotic formula is guaranteed to be
valid in this case: for a rigorous discussion, we refer the reader to
\cite[Thm.~8.4]{bornemann2010highorderderivatives}. Direct
calculation shows that the above asymptotic expression on the right
hand side takes its minimum value at $\rho = \left( \frac{n -
\varsigma}{\mu\nu} \right)^{ \frac{1}{\nu} }$. This completes the
proof.
\end{proof}

Next, we consider the case where $f$ is only analytic in a bounded region in the complex plane. Define
\[
\vartheta = \sup_{1 < \rho < \rho_{\max}}\frac{ \rho M{'}(\rho)}{
M(\rho) }.
\]
Furthermore, applying the third assertion of Theorem \ref{thm:modulus of f}, we have
\[
\vartheta = \lim_{ \rho \rightarrow \rho_{\max} } \frac{ \rho
M{'}(\rho)}{ M(\rho) }.
\]
The following theorem is analogous to \cite[Thm.~4.5]{bornemann2010highorderderivatives}, which shows the
optimal radius approaches $\rho_{\max}$ for large $n$.
\begin{theorem}\label{thm:optimal for finite radius}
Let $f$ be analytic in any ellipse $\mathcal{E}_{\rho} $ with $1
\leq \rho < R < \infty$. Then,
\begin{enumerate}
\item  If $n > \vartheta$, the condition number $\kappa^{\mathrm{Ch1}}(n,
\rho)$ is strictly decreasing for $1 < \rho < \rho_{\max}$.
\item  If $\vartheta
= \infty$, then $\kappa^{\mathrm{Ch1}}(n, \rho)$ is strictly
increasing in the vicinity of $\rho = \rho_{\max}$.
\item  If $\vartheta < \infty $
and $ \lim_{\rho \rightarrow \rho_{\max} } M(\rho) $ exists
and is finite, then the optimal radius $\rho = \rho_{\max}$ for $n
> \vartheta$.
\end{enumerate}
\end{theorem}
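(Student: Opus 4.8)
The plan is to reduce all three assertions to the sign of a single quantity. Since $\kappa^{\mathrm{Ch1}}(n,\rho) = M(\rho)/(|a_n|\rho^n)$ is positive, I would work with its logarithm,
\[
\log \kappa^{\mathrm{Ch1}}(n,\rho) = \log M(\rho) - \log|a_n| - n\log\rho,
\]
and differentiate in $\rho$. Multiplying the derivative by $\rho>0$ gives
\[
\rho\,\frac{\partial}{\partial\rho}\log\kappa^{\mathrm{Ch1}}(n,\rho) = \frac{\rho M'(\rho)}{M(\rho)} - n,
\]
so that the sign of $\partial_\rho\kappa^{\mathrm{Ch1}}(n,\rho)$ is exactly the sign of $\phi(\rho)-n$, where $\phi(\rho):=\rho M'(\rho)/M(\rho)$ is the logarithmic derivative of $M$ with respect to $\log\rho$. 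The entire theorem then becomes a matter of comparing $\phi(\rho)$ to $n$ across $(1,\rho_{\max})$.

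The key structural input is the third assertion of Theorem \ref{thm:modulus of f}: $\log M$ is convex in $\log\rho$, which is precisely the statement that $\phi(\rho)$ is non-decreasing in $\rho$. Hence its supremum over $(1,\rho_{\max})$ equals its limit, $\vartheta = \lim_{\rho\to\rho_{\max}}\phi(\rho)$, as already noted above the statement. With this monotonicity the first two cases fall out immediately. For (1), if $n>\vartheta$ then $\phi(\rho)\le\vartheta<n$ for every $\rho\in(1,\rho_{\max})$, so $\phi(\rho)-n<0$ throughout and $\kappa^{\mathrm{Ch1}}(n,\rho)$ is strictly decreasing. For (2), if $\vartheta=\infty$ then $\phi(\rho)\to\infty$ as $\rho\to\rho_{\max}$, so $\phi(\rho)>n$ once $\rho$ is sufficiently close to $\rho_{\max}$; there $\phi(\rho)-n>0$ and $\kappa^{\mathrm{Ch1}}(n,\rho)$ is strictly increasing.

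For assertion (3) I would combine the monotonicity from (1) with the boundary hypothesis. Because $n>\vartheta$, case (1) already shows $\kappa^{\mathrm{Ch1}}(n,\cdot)$ is strictly decreasing on $(1,\rho_{\max})$. The assumption that $\lim_{\rho\to\rho_{\max}}M(\rho)$ exists and is finite then forces $\kappa^{\mathrm{Ch1}}(n,\rho)\to M(\rho_{\max})/(|a_n|\rho_{\max}^{\,n})$, a finite positive limit. A strictly decreasing continuous function with a finite limit at the right endpoint attains its infimum precisely in that limit, so the optimal radius is $\rho=\rho_{\max}$.

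I do not anticipate a deep obstacle here; the work is in the bookkeeping rather than in any single hard estimate. The two points needing the most care are the passage from the supremum defining $\vartheta$ to the limit at $\rho_{\max}$ — which rests entirely on the convexity (monotonicity of $\phi$) and should be invoked cleanly rather than assumed — and the boundary analysis in (3), where finiteness of $\lim_{\rho\to\rho_{\max}}M(\rho)$ is exactly what is needed to conclude that the infimum of $\kappa^{\mathrm{Ch1}}$ is realised at $\rho_{\max}$ and not merely approached from within a region where $f$ fails to be analytic. Finally, I would remark that the identical argument applies to $\kappa^{\mathrm{Ch2}}$ with $M(\rho)$ replaced by its second-kind counterpart.
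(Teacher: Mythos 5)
Your proposal is correct and follows essentially the same route as the paper's proof: differentiate $\log\kappa^{\mathrm{Ch1}}(n,\rho)$, bound the derivative by $(\vartheta-n)/\rho$ using the definition of $\vartheta$ as a supremum, invoke the log-convexity of $M$ (Theorem \ref{thm:modulus of f}) to identify $\vartheta$ with the limit at $\rho_{\max}$ for assertion (2), and deduce (3) from the strict monotonicity in (1) together with finiteness of $\lim_{\rho\to\rho_{\max}}M(\rho)$. Your write-up is somewhat more explicit about the boundary argument in (3) than the paper, but the underlying argument is identical.
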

\begin{proof}
In analogy to \cite[Thm.~4.5]{bornemann2010highorderderivatives},
differentiating the condition number with respect to $\rho $ yields
\begin{align*}
\frac{d}{d\rho} \log \kappa^{\mathrm{Ch1}}(n, \rho) =
\frac{M{'}(\rho)}{ M(\rho) }  - \frac{n}{\rho} \leq \frac{\vartheta
- n}{\rho}.
\end{align*}
If $n > \vartheta$, then the condition number
$\kappa^{\mathrm{Ch1}}(n, \rho)$ is a strictly decreasing function
of $\rho$ and the first assertion follows. If $\vartheta = \infty$,
this implies that $\kappa^{\mathrm{Ch1}}(n, \rho)$ is strictly
increasing when $\rho \rightarrow \rho_{\max}$, thus the second
assertion holds. Finally, if $\vartheta < \infty $ and $ \lim_{\rho
\rightarrow \rho_{\max} } M(\rho) $ exists and is finite,
then the third assertion follows from the first assertion.
\end{proof}

\subsection{Examples of optimal contours}


In this section we give some specific examples of optimal radii.
However, first we show that the condition number accurately predicts the relative error of the Chebyshev coefficients. Fig. \ref{fig:condition number for entire} shows the condition number, as well as the ratio of the relative error of the Chebyshev coefficients to the machine precision, for two
entire functions $f(x) = e^x$ and $f(x)=\cos(2x+2)$. There is a clear agreement between both quantities. Fig. \ref{fig:condition number for pole} shows the same experiment for two analytic functions that are not entire, $f(x) = \frac{1}{x-2}$ and $f(x)=\frac{x+1}{x^2+1}$. From this figure we observe that the condition number assumes its minimum value when $\rho$ is close to its maximum value.

\begin{figure}[h]
\centering
\begin{overpic}
[width=6.5cm]{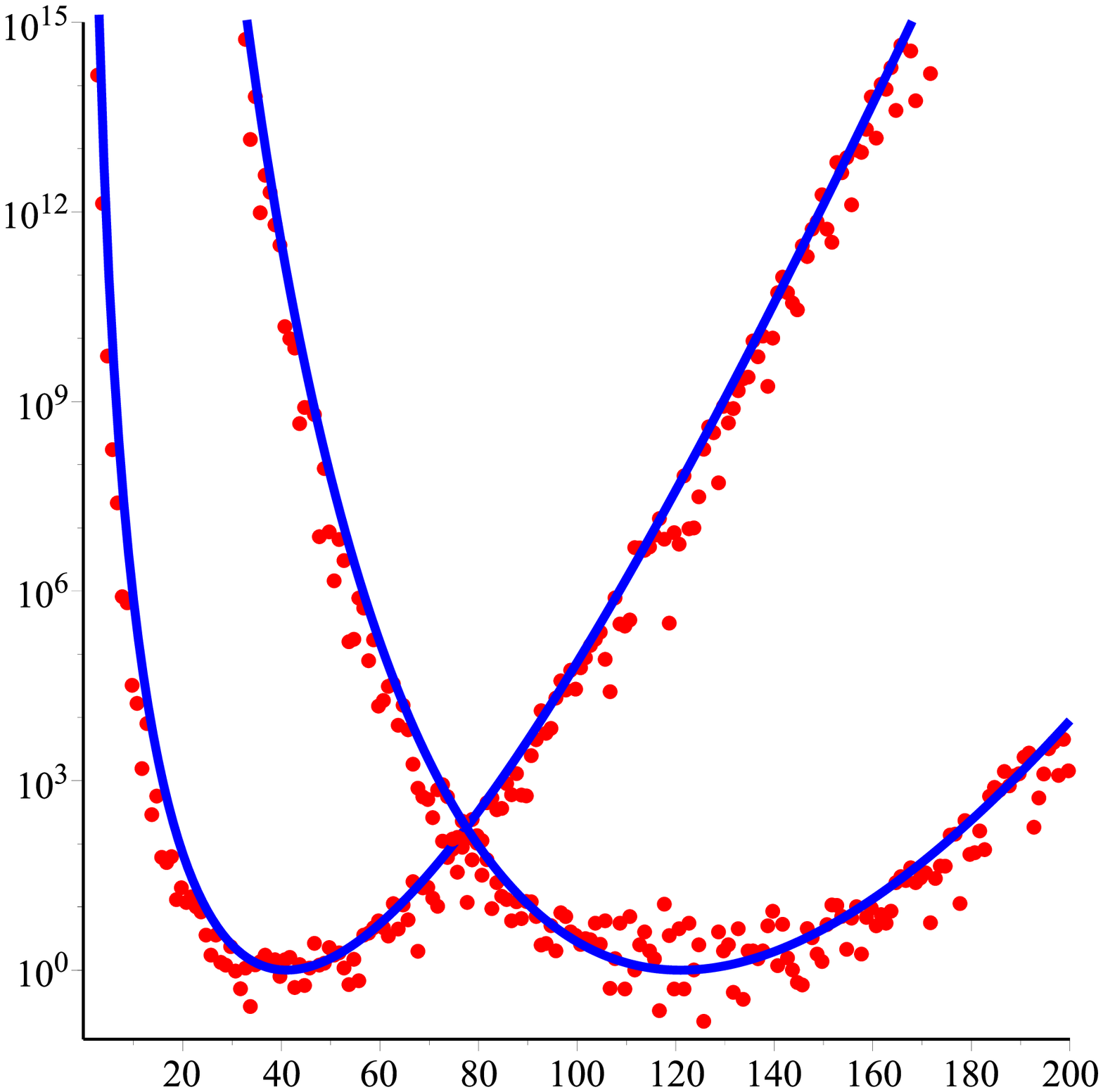}
\end{overpic}
\qquad
\begin{overpic}
[width=6.5cm]{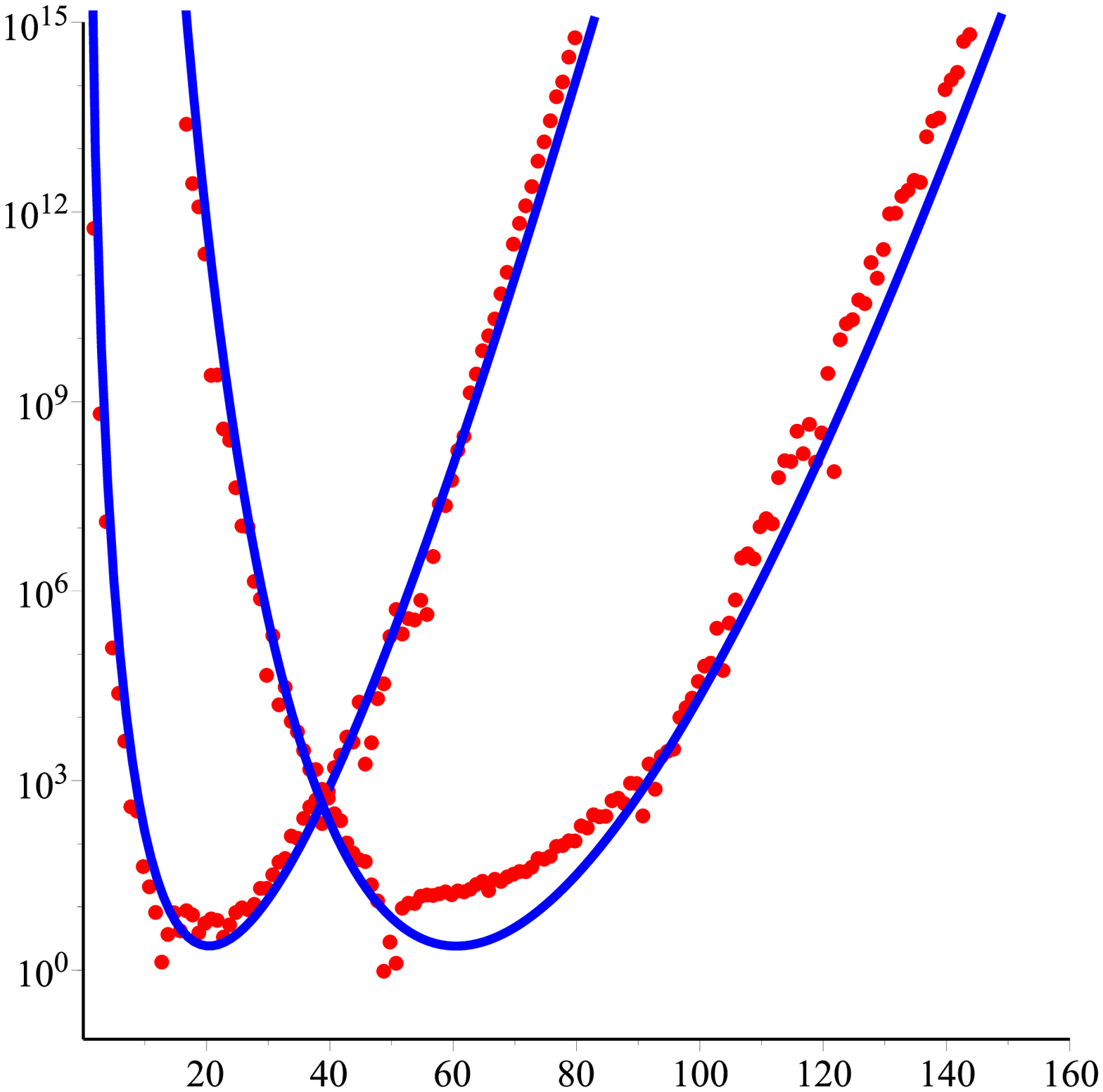}
\end{overpic}
\caption{Ratio of the relative error of the $n$-th Chebyshev
coefficients to the machine precision (dots) and the condition
number $\kappa(n,\rho)$ (line) for $n = 20, 60$, respectively. The
test functions are $f(x) = e^x$ (left) and $f(x) = \cos(2x+2)$
(right).} \label{fig:condition number for entire}
\end{figure}

\begin{figure}[h]
\centering
\begin{overpic}
[width=6.5cm]{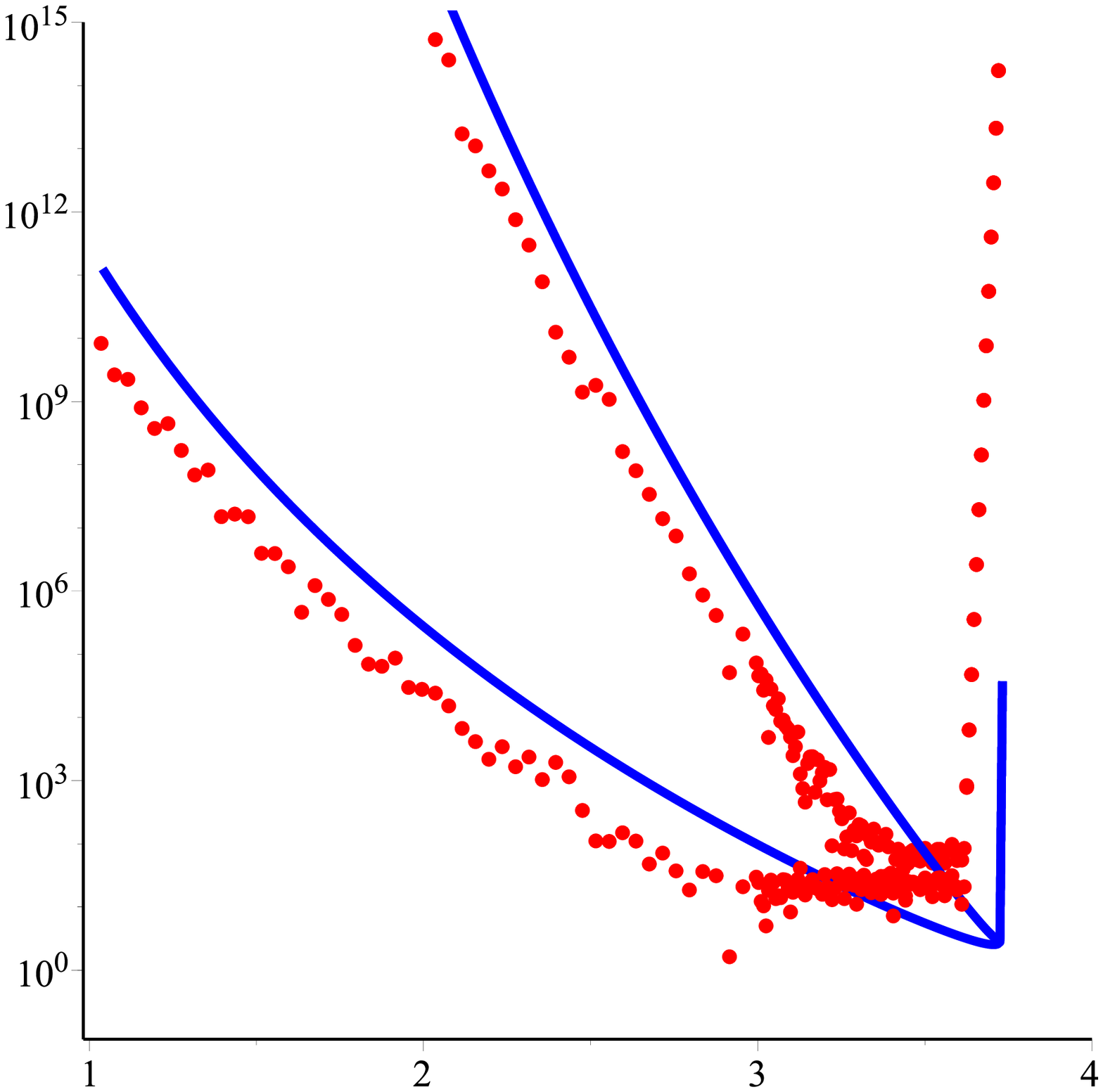}
\end{overpic}
\qquad
\begin{overpic}
[width=6.5cm]{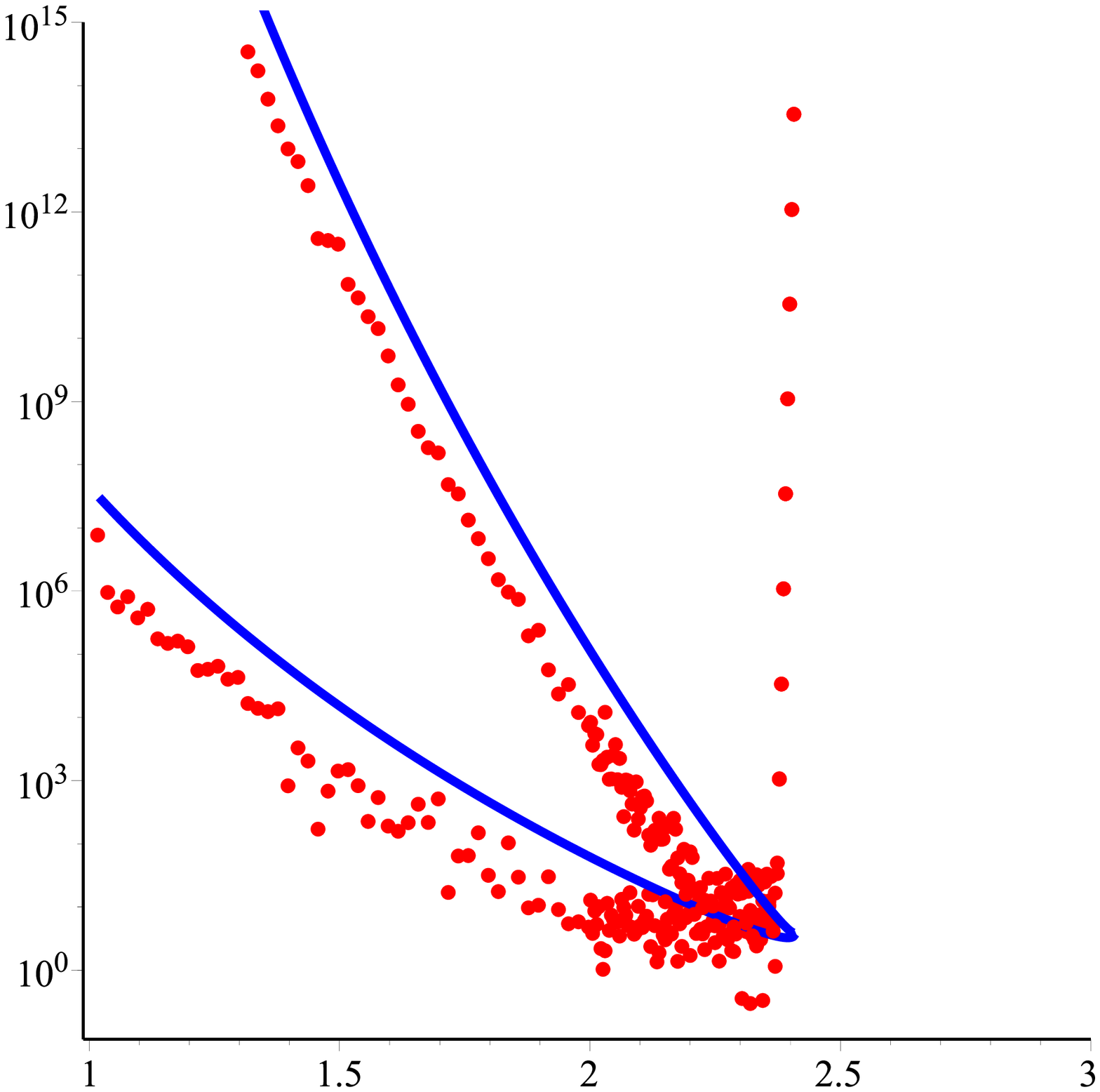}
\end{overpic}
\caption{Ratio of the relative error of the $n$-th Chebyshev
coefficients to the machine precision (dots) and the condition
number $\kappa(n,\rho)$ (line) for $n = 20, 60$, respectively. The
test functions are $f(x) = \frac{1}{x-2}$ (left) and $f(x) = \frac{x
+ 1}{x^2 + 1}$ (right). } \label{fig:condition number for pole}
\end{figure}

\begin{example}
Consider the exponential function $f(x) = e^x$, which is entire and transcendental. Its Chebyshev
coefficients are $a_n = 2 I_n(1)$ and
\[
\int_{0}^{2 \pi} \left| e^{ \frac{1}{2} (\rho e^{i\theta} + (\rho
e^{i\theta})^{-1} ) } \right| d\theta  = \int_{0}^{2\pi} e^{
\frac{1}{2} (\rho + \rho^{-1} ) \cos\theta  } d\theta = 2 \pi
I_0(\tfrac{1}{2} ( \rho + \rho^{-1} ) ),
\]
where $I_n(x)$ is the modified Bessel function of the first kind of
order $n$ \cite[p.~376]{abramowitz1965handbook}. Thus, the condition number is
\begin{align}
\kappa^{\mathrm{Ch1}}(n, \rho)  =  \frac{1}{I_n(1)} I_0(
\tfrac{1}{2} (\rho + \rho^{-1}) ) \rho^{-n}.
\end{align}
Using the first term of the asymptotic expansion of the $I_n(x)$
\cite[p.~377]{abramowitz1965handbook}
\[
I_n(x) = \frac{e^x}{\sqrt{2\pi x}} \left( 1 - \frac{4n^2 - 1}{8x} +
\mathcal{O}(x^{-2}) \right), \quad  x \rightarrow\infty,
\]
we get from Theorem \ref{thm:optimal radius for entire functions} that
\[
\mu = \frac{1}{2}, \quad  \nu = 1, \quad \varsigma = - \frac{1}{2}.
\]
Therefore,
\[
\rho^*(n) = 2n + 1.
\]

Direct calculation of the condition number yields
\[
1 \leq \kappa^{\mathrm{Ch1}}(n, 2n + 1) < 1.08, \quad  n \geq 0.
\]
This bound for condition number shows that the Chebyshev
coefficients can be accurately computed without loss of accuracy if
the optimal radius is used.

\end{example}

\begin{example}
Consider the cosine function $f(x) = \cos(c x + d) $ with real
constants $c$, $d$ and $c > 0$. The exact Chebyshev coefficients are
\[
a_n = 2 \cos \left( d + n \frac{\pi}{2} \right) J_n(c), \quad n\geq
0,
\]
where $J_n(x)$ denotes the Bessel function of the first kind. We
have
\begin{align*}
M(\rho) & = \frac{1}{\pi} \int_{0}^{2 \pi} \left| \cos \left(
\frac{c}{2} (\rho e^{i\theta} + (\rho e^{i\theta})^{-1} ) + d\right)
\right| d\theta \\
& = \frac{1}{2 \pi} \int_{0}^{2\pi} \sqrt{ e^{c (\rho - \rho^{-1})
\sin\theta } + e^{- c (\rho - \rho^{-1}) \sin\theta } + 2 \cos( c(
\rho  + \rho^{-1} ) \cos\theta + 2d ) } d\theta  \\
& = \frac{1}{\pi} \int_{0}^{\pi} \sqrt{ e^{c (\rho - \rho^{-1})
\sin\theta } + e^{- c (\rho - \rho^{-1}) \sin\theta } + 2 \cos( c(
\rho  + \rho^{-1} ) \cos\theta + 2d ) } d\theta.
\end{align*}
For large $\rho$, noting that the sum in the last equality is
dominated by the first term, we have
\begin{align*}
M(\rho) & \sim \frac{1}{\pi} \int_{0}^{\pi}  e^{\frac{c}{2}
(\rho - \rho^{-1}) \sin\theta }  d\theta \\
& = I_0 \left( \frac{c}{2} (\rho - \rho^{-1}) \right) +
\frac{4}{\pi} \sum_{k = 0}^{\infty} \frac{(-1)^k }{ 2k + 1 } I_{2k +
1 } \left( \frac{c}{2} (\rho - \rho^{-1}) \right),
\end{align*}
where we have made use of the expansion \cite[Eqn. 9.6.35]{abramowitz1965handbook}.
Using the first term of the asymptotic expansion of $I_n(x)$, we
obtain
\[
M(\rho) \sim 2 \frac{ e^{ \frac{c}{2} \rho } }{ \sqrt{ c \pi \rho }
}, \quad \rho \rightarrow\infty.
\]
Identifying with Theorem \ref{thm:optimal radius for entire functions} leads to
\[
\mu = \frac{c}{2}, \quad  \nu = 1, \quad \varsigma = - \frac{1}{2}.
\]
Thus, we can derive the optimal radius for the cosine function
\[
\rho^*(n) = \frac{2n + 1}{c}.
\]
For example, for $c = 2$ and $d = 2$, direct calculation shows
\[
1 < \kappa^{\mathrm{Ch1}}(n, \frac{2n + 1}{c}) < 2.48, \quad  n \geq
1.
\]
\end{example}

\begin{example}
Consider a model function with a simple pole on the real line
\[
f(x) = \frac{1}{x - a},
\]
where $a>1$. The exact Chebyshev coefficients are given by
\cite[Eqn.~(5.14)]{mason2003chebyshev}
\[
a_n = -  \frac{2}{\sqrt{a^2 - 1}} (a - \sqrt{a^2 - 1} )^n , \quad
n\geq 0.
\]
Note that $f$ has a pole at $z = a$, we can deduce immediately that
$1 < \rho < A$ and $A = a + \sqrt{a^2-1}$. Direct calculation gives
\begin{align*}
M(\rho) & = \frac{1}{\pi} \int_{0}^{2 \pi} \frac{1}{ | \tfrac{1}{2}( \rho e^{i \theta} + (\rho e^{i\theta})^{-1} ) - a | }  d\theta \\
& = \frac{2 \rho}{\pi} \int_{0}^{2 \pi} \frac{1}{ | ( \rho e^{i
\theta}
)^2 - 2 a (\rho e^{i\theta}) + 1 | }  d\theta \\
& = \frac{2 \rho}{\pi} \int_{0}^{2 \pi} \frac{1}{ | ( \rho e^{i
\theta} -
A) (\rho e^{i\theta} - A^{-1} ) |}  d\theta. 
\end{align*}
The latter integral can be evaluated exactly in terms of elliptic integrals. An asymptotic expression for $\rho$ tending to $A$ is
\[
 M(\rho) \sim  \frac{4 \rho}{\pi} \frac{3 \log 2 + \log( A (A^2-1)) - \log (A^2+1) - \log (A-\rho)}{A^2-1}.
\]
Optimizing the condition number for large $n$ leads, after further asymptotic approximations, to
\[
 \rho^*(n) = A \left( 1 - \frac{1}{ n( 3 \, \log 2 + \log n )} \right).
\]
For small values of $A$ and $n$, a slightly more accurate
expression is
\[
 \rho^*(n) = A \left( 1 - \frac{1}{n ( 3 \, \log 2 - \log (A^2+1) + \log (A^2-1) + \log n) } \right).
\]
This leads for both expressions to a logarithmic growth of the
condition number as a function of $n$, approximately $\log n/\pi$.
Similar growth was observed for the computation of high derivatives
of this function in \cite[Example
5.2]{bornemann2010highorderderivatives}.

For example, when $a = 2$ direct calculation shows
\[
1 < \kappa^{\mathrm{Ch1}}(n, \rho^{*}(n) ) < 4.72, \quad  0 \leq n
\leq 10000,
\]
if we choose the optimal radius
\begin{equation}\label{eq:rho_pole}
\rho^{*}(n) = \left\{
                    \begin{array}{ll}
                     A \left( 1 - \frac{1}{ n( 3\log2 + \log n )}\right) ,  & \hbox{if $n \geq 1$,} \\
                     A \left( 1 - \frac{1}{3\log2}\right),  & \hbox{if $n = 0$.}
                    \end{array}
                  \right.
\end{equation}

The case where $f$ has a complex pole can be analyzed similarly, but is slightly more involved. Expression \eqref{eq:rho_pole} for the optimal radius continues to hold for a pole at the point $z_0$, with
\[
 A = | z_0 \pm \sqrt{z_0^2 - 1}|
\]
and where the sign is chosen such that $A > 1$. We omit the details of the derivation.

\begin{remark} In order to achieve the relative error tolerance $\epsilon$ by using the optimal radius, numerical experiments suggest
that we need about
\begin{align}\label{eq:estimate of nodes}
m_{\epsilon} \approx n ( 3\log2 + \log n) \log \epsilon^{-1}
\end{align}
nodes for large $n$. For example, we consider the computation of
$a_{100}$ of the function $f(x) = \frac{1}{x-4}$. To achieve
relative error $\epsilon = 10^{-13}$, we need $m_{\epsilon} \approx
20009$ nodes. Numerical results show that the relative error is
$2.0\times 10^{-14}$ when $m=20010$.
\end{remark}
\end{example}

\begin{example}
Consider the function
\[
f(x) = (c - x)^{\phi} g(x),
\]
where $\phi > 0$ is not an integer and $c > 1$ and $g(x)$ is an
analytic function at $x=c$. In this example, $f(x)$ has a branch
point at $x = c$. Direct calculations show that the maximum value of
$\rho$ is $\rho_{\max} = c+\sqrt{c^2-1}$ and
\begin{align}
M(\rho)& = \frac{1}{\pi} \int_{0}^{2 \pi} \left| f \left(
\frac{1}{2}
(\rho e^{i\theta} + (\rho e^{i\theta})^{-1} ) \right) \right| d\theta \nonumber \\
& = \frac{1}{\pi} \int_{0}^{2 \pi} \left| \left( c - \frac{1}{2}
(\rho e^{i\theta} + (\rho e^{i\theta})^{-1} ) \right)^{\phi} g\left(
\frac{1}{2}
(\rho e^{i\theta} + (\rho e^{i\theta})^{-1} ) \right) \right| d\theta \nonumber \\
& = \frac{1}{\pi} \int_{0}^{2 \pi} \left[ \frac{1}{4} ( \rho^2 +
\rho^{-2} ) - c( \rho + \rho^{-1} ) \cos\theta + \frac{1}{2}
\cos(2\theta) + c^2 \right]^{\frac{\phi}{2}} \nonumber \\
&~~~~~~~~~~~~~~~~~~~~ \left|g\left( \frac{1}{2} (\rho e^{i\theta} +
(\rho e^{i\theta})^{-1} ) \right)\right| d\theta.
\end{align}
It is easy to see that the integral in the last equation is bounded
when $\rho = \rho_{\max}$. Applying Theorem \ref{thm:modulus of f}
we have
\[
\lim_{\rho \rightarrow \rho_{\max}} \kappa^{\mathrm{Ch1}}(n, \rho) =
\frac{ \lim_{\rho \rightarrow \rho_{\max} } M(\rho) }{ |a_n|
\rho_{\max}^n },
\]
and the limit is finite. Thus, from Theorem \ref{thm:optimal for
finite radius}, we deduce that the optimal radius is $\rho^{*}(n) =
\rho_{\max}$ for large $n$. Moreover, from
\cite[Eqn.~(37)]{elliott1964chebyshev} we know that the Chebyshev
coefficients of $f(x)$ have the following estimate
\[
a_n \simeq - \frac{ 2\sin(\phi\pi) (c^2-1)^{\frac{\phi}{2}} g(c)
\Gamma(\phi+1) }{\pi n^{\phi+1} \rho_{\max}^n }.
\]
Thus, we can estimate the growth of the optimal condition number
\[
\kappa^{\mathrm{Ch1}}(n, \rho^{*}(n)) = \frac{ M(\rho) }{ |a_n|
\rho_{\max}^{n} } \sim \mathcal{O}( n^{\phi+1}), \quad
n\rightarrow\infty.
\]
which shows the optimal condition number grows algebraically as $n
\rightarrow \infty$.

\end{example}

\subsection{Identifying Chebyshev coefficients with Taylor coefficients}

An alternative way to reuse the results of \cite{bornemann2010highorderderivatives} is to put the integral representation of the Chebyshev coefficients \eqref{eq:ChebyT_contour_integral} into the form of a Cauchy integral like \eqref{eq:taylorcoefficient}. We will show that this can be achieved by a conformal map. The main advantage is that theoretical results can be reused. However, this identification between integrals does not seem to lead to a new or improved numerical scheme.

Let us first show that the Chebyshev coefficients of an analytic function can be viewed as the Taylor coefficients of another analytic function. An explicit form of this function can be established in terms of a contour integral of $f(z)$.

\begin{theorem}
Suppose that $a_k$ are the Chebyshev coefficients of the first kind
of the function $f(z)$ which is analytic inside and on the ellipse
$\mathcal{E}_{\rho}$. Then they are the Taylor coefficients of the following function
\begin{align}
H(x) = \frac{1}{\pi i} \oint_{ \mathcal{C}_{\rho} }  \frac{ f(
\tfrac{1}{2} ( u + u^{-1} ) ) }{u - x} d u,
\end{align}
and $H(x)$ is analytic inside the circle $\mathcal{C}_{\rho}$.
\end{theorem}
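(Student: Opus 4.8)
The plan is to prove two things: first, that the function $H(x)$, defined by the Cauchy integral over the circle $\mathcal{C}_\rho$, is analytic inside that circle; second, that its Taylor coefficients at the origin coincide with the Chebyshev coefficients $a_k$ of $f$. The analyticity is the easy part. Since $f(\tfrac12(u+u^{-1}))$ is a fixed continuous (indeed analytic) function of $u$ on the contour $\mathcal{C}_\rho = \{|u|=\rho\}$, the integrand depends on $x$ only through the Cauchy kernel $1/(u-x)$. For any $x$ with $|x|<\rho$ the denominator never vanishes on the contour, so $H$ is well defined, and one may differentiate under the integral sign arbitrarily often; this is the standard argument showing that a Cauchy-type integral over a fixed contour defines a holomorphic function in the region enclosed. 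Thus $H$ is analytic in $\{|x|<\rho\}$.

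For the identification of coefficients, the plan is to expand the Cauchy kernel in a geometric series valid on the contour. For $|x|<\rho=|u|$ we have
\begin{align*}
 \frac{1}{u-x} = \frac{1}{u}\cdot\frac{1}{1-x/u} = \sum_{k=0}^{\infty} \frac{x^k}{u^{k+1}},
\end{align*}
which converges uniformly in $u$ on $\mathcal{C}_\rho$ for each fixed $x$ inside the circle. Substituting this into the definition of $H(x)$ and interchanging sum and integral (justified by uniform convergence on the compact contour) gives
\begin{align*}
 H(x) = \sum_{k=0}^{\infty} x^k \left( \frac{1}{\pi i} \oint_{\mathcal{C}_\rho} f(\tfrac12(u+u^{-1}))\, u^{-k-1}\, du \right).
\end{align*}
The key observation is that the bracketed coefficient is precisely the contour-integral expression for $a_k$ derived in the proof of the first lemma, namely $a_k = \frac{1}{\pi i}\oint_{\mathcal{C}_\rho} f(z(u))\,u^{-k-1}\,du$. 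Hence $H(x)=\sum_{k\ge0} a_k x^k$, so the $a_k$ are exactly the Taylor coefficients of $H$ at the origin, as claimed.

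I do not expect a serious obstacle here; the statement is essentially a repackaging of the Laurent-series computation already carried out in the first lemma, viewed now through the lens of the Cauchy kernel rather than as a Laurent expansion of $2f(z(u))$. The only point requiring a little care is the justification of the term-by-term integration, but this is immediate from the uniform convergence of the geometric series on the fixed circle $|u|=\rho$ combined with the boundedness of $f(z(u))$ there (which holds because $f$ is analytic on and inside $\mathcal{E}_\rho$). One should also note that the radius of analyticity established for $H$ is exactly $\rho$, matching the contour used, which is consistent with the geometric series converging precisely for $|x|<\rho$.
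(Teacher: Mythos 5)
Your proof is correct and uses essentially the same ingredients as the paper's, just run in the opposite direction: the paper starts from the series $H(x)=\sum_{k\ge 0}a_k x^k$, inserts the contour-integral formula $a_k = \frac{1}{\pi i}\oint_{\mathcal{C}_\rho} f(z(u))u^{-k-1}\,du$, and sums the geometric series to produce the Cauchy kernel, whereas you expand the kernel $1/(u-x)$ and read off the coefficients. If anything, your direction is slightly tidier logically, since it defines $H$ by the integral and then \emph{derives} both its analyticity for $|x|<\rho$ and the identification of its Taylor coefficients, rather than presupposing that the power series with coefficients $a_k$ defines a function.
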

\begin{proof}
Suppose $a_k$ are the Chebyshev coefficients of $f$ and meanwhile
the Taylor coefficients of another function $H(x)$, e.g.
\[
f(x) = \sum_{k = 0}^{\infty}{'} a_k T_k(x), \quad  H(x) = \sum_{k =
0}^{\infty} a_k x^k.
\]
In view of the contour integral expression of $a_k$, we have
\begin{align}
H(x) & = \sum_{k = 0}^{\infty} a_k x^k \nonumber \\
& = \frac{1}{\pi i} \oint_{ \mathcal{C}_{\rho} }  f(z)
\sum_{k=0}^{\infty} x^k u^{-k-1} d u \nonumber \\
& = \frac{1}{\pi i} \oint_{ \mathcal{C}_{\rho} }  \frac{ f( z ) }{u
- x}
d u   \nonumber  \\
& = \frac{1}{\pi i} \oint_{ \mathcal{C}_{\rho} }  \frac{ f(
\tfrac{1}{2} ( u + u^{-1} ) ) }{u - x} d u .
\end{align}
This completes the proof.
\end{proof}

\begin{corollary}
Suppose that $b_k$ are the Chebyshev coefficients of the second kind
of the function $f(z)$ which is analytic inside and on the ellipse
$\mathcal{E}_{\rho}$, then they are the Taylor coefficients of the
following function
\begin{align}
H(x) = \frac{1}{ 2 \pi i} \oint_{ \mathcal{C}_{\rho} }  \frac{ f(
\tfrac{1}{2} ( u + u^{-1} ) ) }{u - x} (1 - u^{-2} ) d u,
\end{align}
and $H(x)$ is analytic inside the circle $\mathcal{C}_{\rho}$.
\end{corollary}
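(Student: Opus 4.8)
The plan is to follow the proof of the preceding theorem almost verbatim, replacing the first-kind representation by its second-kind counterpart. The lemma on coefficients of the second kind already furnishes, for each $k \geq 0$, the contour integral \eqref{eq:ChebyU coeff contour}, namely $b_k = \frac{1}{2\pi i}\oint_{\mathcal{C}_\rho} f(\tfrac{1}{2}(u+u^{-1}))(1-u^{-2})u^{-k-1}\,du$. First I would set $H(x) = \sum_{k=0}^\infty b_k x^k$, substitute this representation into each term, and interchange summation and integration to obtain
\[
H(x) = \frac{1}{2\pi i}\oint_{\mathcal{C}_\rho} f\!\left(\tfrac{1}{2}(u+u^{-1})\right)(1-u^{-2})\sum_{k=0}^\infty x^k u^{-k-1}\,du.
\]
The $k$-independent factor $(1-u^{-2})$ is pulled out of the sum, exactly where the analogous factor is absent in the first-kind case.

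Next I would recognise the inner sum as a geometric series, $\sum_{k=0}^\infty x^k u^{-k-1} = \frac{1}{u-x}$, which is valid precisely because $|x| < |u| = \rho$ for every $u$ on the contour $\mathcal{C}_\rho$. Substituting this closed form yields the asserted expression for $H(x)$. For the analyticity claim, note that for $|x| < \rho$ the integrand is continuous in $u$ on the compact contour and analytic in $x$ with no pole inside $\mathcal{C}_\rho$; differentiation under the integral sign (or Morera's theorem) then shows that $H$ is analytic on the disk $|x| < \rho$. Equivalently, the exponential decay $|b_k| \leq C \rho^{-k}$ noted in the absolute-stability discussion guarantees that the power series $\sum_k b_k x^k$ has radius of convergence at least $\rho$.

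The only step that genuinely requires justification is the interchange of summation and integration. This is legitimate because on $\mathcal{C}_\rho$ one has $|x/u| = |x|/\rho < 1$, so the geometric series converges uniformly in $u$, while the prefactor $f(\tfrac{1}{2}(u+u^{-1}))(1-u^{-2})$ is bounded on the compact contour; uniform convergence then permits termwise integration. I expect this to be the main, though minor, obstacle, and it is resolved exactly as in the first-kind theorem.

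As an alternative route, one could instead deduce the corollary from the first-kind theorem together with the relation \eqref{eq:ab_relationship}, $b_n = (a_n - a_{n+2})/2$. Summing $b_n x^n$ then expresses $H$ through the first-kind generating function and an index-shifted copy of it, and a short partial-fraction computation with $\frac{1}{u^2(u-x)}$ reproduces the same integral. I would, however, favour the direct derivation above, as it is shorter and self-contained.
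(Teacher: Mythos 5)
Your proposal is correct and matches the paper's intent exactly: the paper states this corollary without proof precisely because it follows from the preceding theorem's argument verbatim, using the second-kind contour representation \eqref{eq:ChebyU coeff contour} in place of the first-kind one and summing the same geometric series $\sum_{k=0}^\infty x^k u^{-k-1} = \frac{1}{u-x}$. Your added justifications (uniform convergence for the interchange, and the decay bound $|b_k| \leq C\rho^{-k}$ for analyticity on $|x|<\rho$) are sound refinements of that same route.
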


In the following we present some concrete examples, where $H(x)$ can be deduced in (almost) closed form.

\begin{example}
Consider the exponential function $f(x) = e^x$. We have
\begin{align}
H(x) = \frac{1}{\pi i} \oint_{ \mathcal{C}_{\rho} }  \frac{ e^{
\tfrac{1}{2} ( u + u^{-1} ) } }{u - x} d u.
\end{align}
Direct calculations show that
\begin{align}
a_k & = \frac{H^{(k)}(0)}{k!} \nonumber \\
& = \frac{1}{\pi i } \oint_{ \mathcal{C}_{\rho} }  e^{ \tfrac{1}{2}
( u + u^{-1} ) }  \frac{1}{(u - x)^{k+1} } d u  \nonumber \\
& = \sum_{m = 0}^{\infty} \frac{2}{ 2^{k + 2m } \Gamma(k+m+1)
\Gamma(m+1)  } \nonumber \\
& = 2 I_k(1).
\end{align}
Thus, we have
\[
H(x) = 2 \sum_{k = 0}^{\infty} I_k(1) x^k,
\]
which is an entire function.
\end{example}

\begin{example}
Consider the function
\[
f(x) = \frac{1}{x - a}, \quad a>1.
\]
Using the residue theorem, we obtain
\begin{align}
H(x) & =  \frac{1}{\pi i} \oint_{ \mathcal{C}_{\rho} }  \frac{ f(
\tfrac{1}{2} ( u + u^{-1} ) ) }{u - x} d u   \nonumber \\
& = \frac{1}{\pi i} \oint_{ \mathcal{C}_{\rho} } \frac{ 2 u }{ u^2 - 2au + 1 } \frac{ 1 }{u - x} d u   \nonumber \\
& = \frac{ 2 (a + \sqrt{a^2 - 1} ) }{ ( x - ( a + \sqrt{a^2 - 1} ) )
\sqrt{a^2 - 1} },
\end{align}
and $H(x)$ is analytic inside the circle $ |z| < a + \sqrt{a^2 -
1}$.
\end{example}


\section{Two strategies for computing the Chebyshev coefficients }\label{sec:two strategies}
In this section we present two strategies for computing the first
$N+1$ Chebyshev coefficients of analytic functions. The first
strategy maximizes the computational efficiency and can be performed
via the FFT. The second strategy minimizes the loss of accuracy for
each coefficient and is stable with respect to relative errors.

\subsection{Fast algorithms to maximize the efficiency
}\label{sec:maximize the efficiency}

Note that the sum \eqref{eq:trap for cheb1} for the computation of
$\{a_k\}_{k=0}^{N}$ is suitable for using FFT if $\rho$ is fixed for
all $\{ a_k(m,\rho) \}_{k=0}^{N}$. Therefore, by choosing the same
value of $\rho$ for each expansion coefficient, either for integral
\eqref{eq:trap for cheb1} or integral \eqref{eq:trap for cheb2}, the
Chebyshev coefficients $a_n$ and $b_n$ can be computed efficiently
with a single FFT and this process can be performed in
$\mathcal{O}(m\log m)$ operations. In the following we present some
numerical experiments to show the performance of the FFT algorithm.
\begin{example}

First, we consider the transcendental function $f(x) = e^x$.
Clearly, this function is entire and thus $1 \leq \rho < \infty$. In
Figure \ref{fig:Absolute errors of Chebyshev coefficients one} we
show the absolute and relative errors of the FFT algorithm for
computing the first $N+1$ Chebyshev coefficients. We see that the
absolute errors are uniformly small for $0\leq k \leq N$ when we
choose $\rho = 1$. When $\rho>1$, we see that the absolute errors
decrease exponentially as $k$ increases. However, we also observe
that the absolute errors deteriorate for the first several Chebyshev
coefficients if $\rho=40$. As for the relative error, we observe
that it has the fastest rate of exponential growth when $\rho=1$ and
then becomes better as $\rho$ increases. When $\rho=40$, we see that
the relative error deteriorates for the first several Chebyshev
coefficients.
\end{example}

\begin{figure}[h]
\centering
\begin{overpic}
[width=7.1cm,height=6cm]{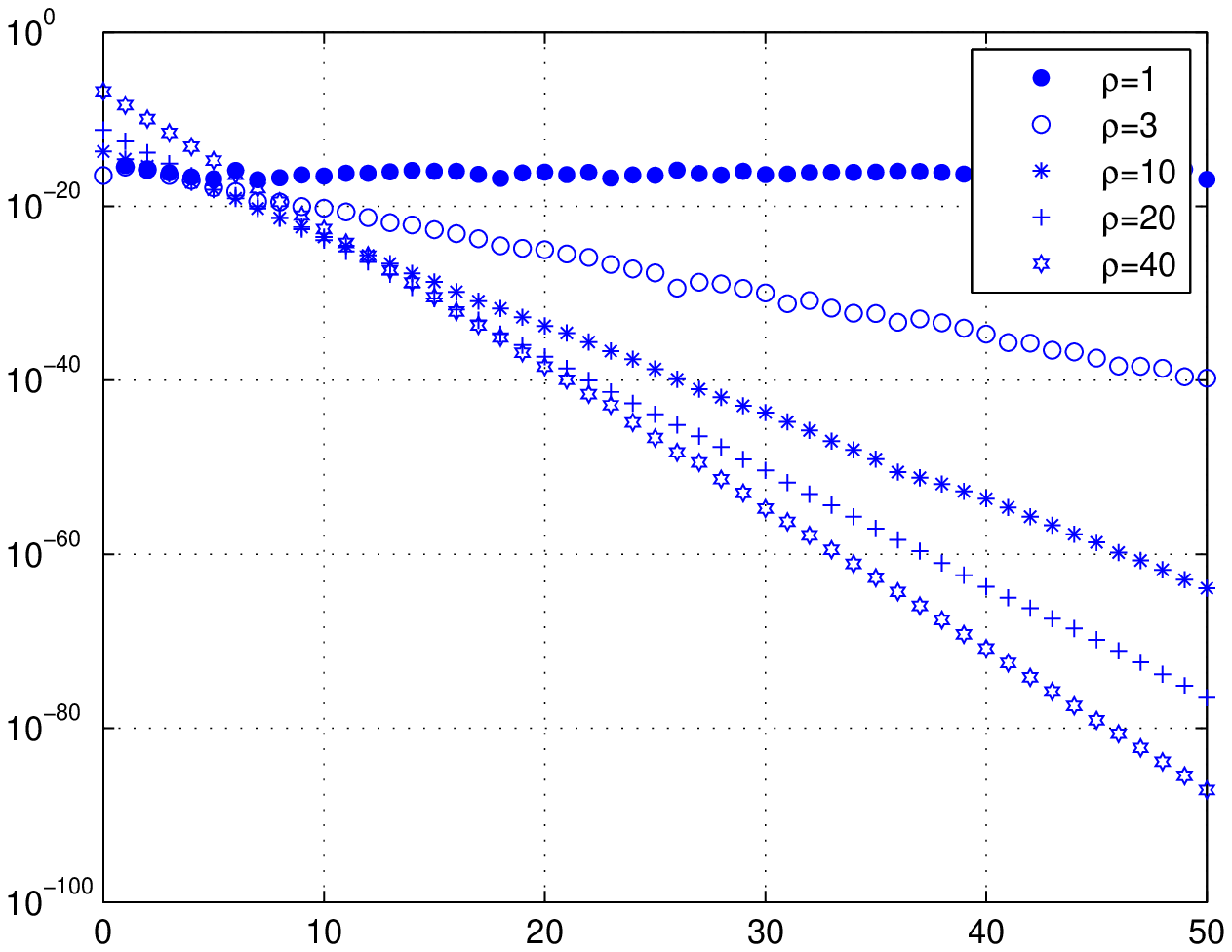}
\end{overpic}
~~
\begin{overpic}
[width=7.1cm,height=6cm]{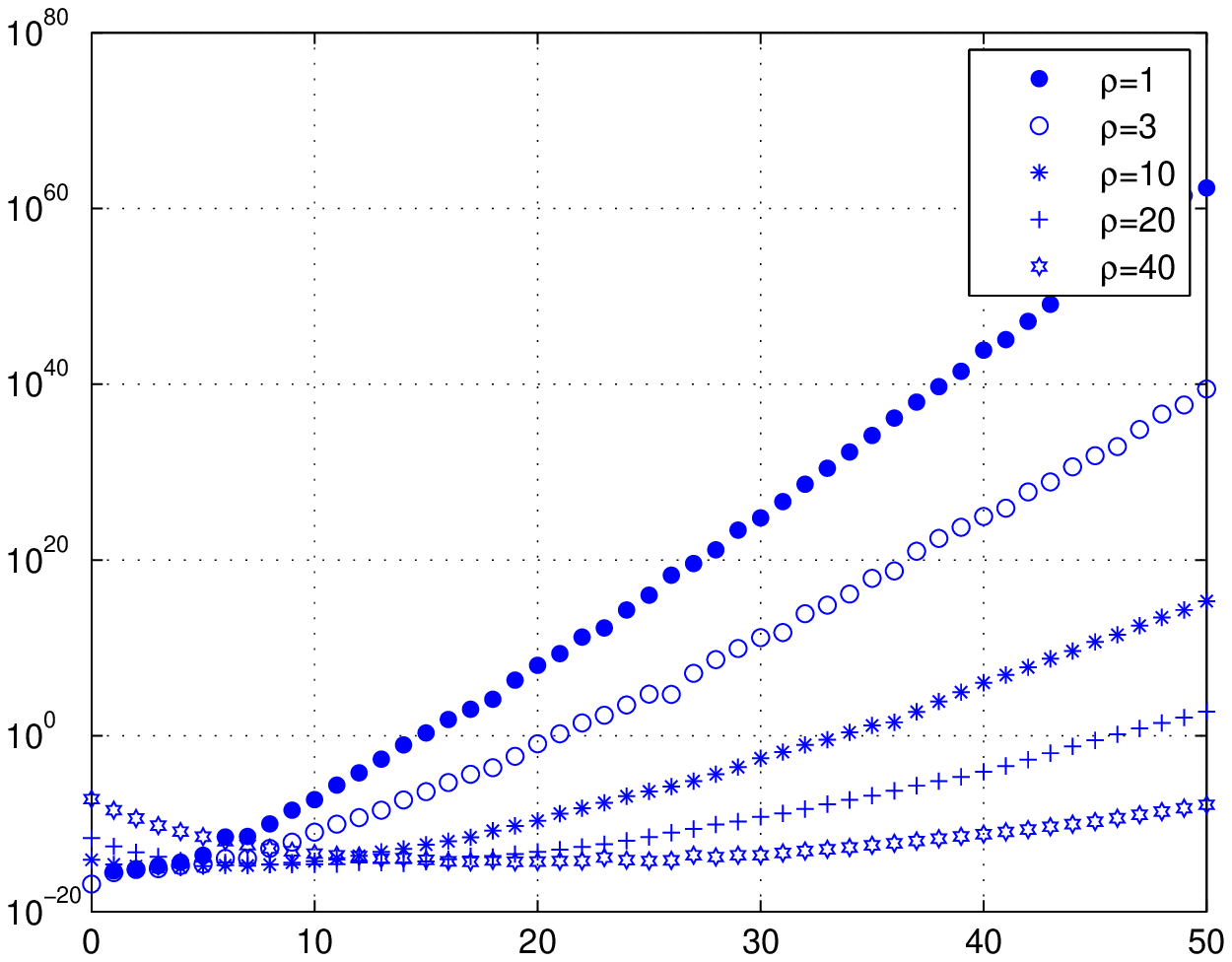}
\end{overpic}
\caption{Absolute errors (left) and relative errors (right) of the
computed Chebyshev coefficients $\{ a_k(m,\rho) \}_{k=0}^{N}$ for
the function $f(x) = e^x$. Here we choose $N=50$ and $m=2N+1$. }
\label{fig:Absolute errors of Chebyshev coefficients one}
\end{figure}

\begin{example}
We consider the function $f(x) = \frac{1}{x-2}$. Note that this
function has a real pole at $x=2$ and we can deduce that $1 \leq
\rho < 2+\sqrt{3} \approx 3.732$. In our computations we choose $m =
4N+2$ and we have tested several values of $\rho$. Numerical results
are presented in Figure \ref{fig:Absolute errors of Chebyshev
coefficients two}. We see that, similar to the above example, the
absolute errors are also uniformly small when we choose $\rho = 1$
and decrease exponentially as $k$ increases when $\rho>1$. As for
the relative error, we observe that it grows exponentially with the
fastest rate when $\rho=1$ and then becomes better as $\rho$ grows.
In particular, the relative error is less than $10^{-11}$ for all
$\{a_k\}_{k=0}^{N}$ when $\rho = 3$. We point out that the absolute
and relative errors will deteriorate simultaneously when $\rho$ is
very close to its maximum value. This is due to the fact that the
term $\| f - s_{m} \|_{ \mathcal{E}_{\rho}}$ in Theorem
\ref{thm:absolute stability} tends to infinity when $m$ is fixed and
$\rho$ tends to its maximum value.
\end{example}

\begin{figure}[h]
\centering
\begin{overpic}
[width=7.1cm,height=6cm]{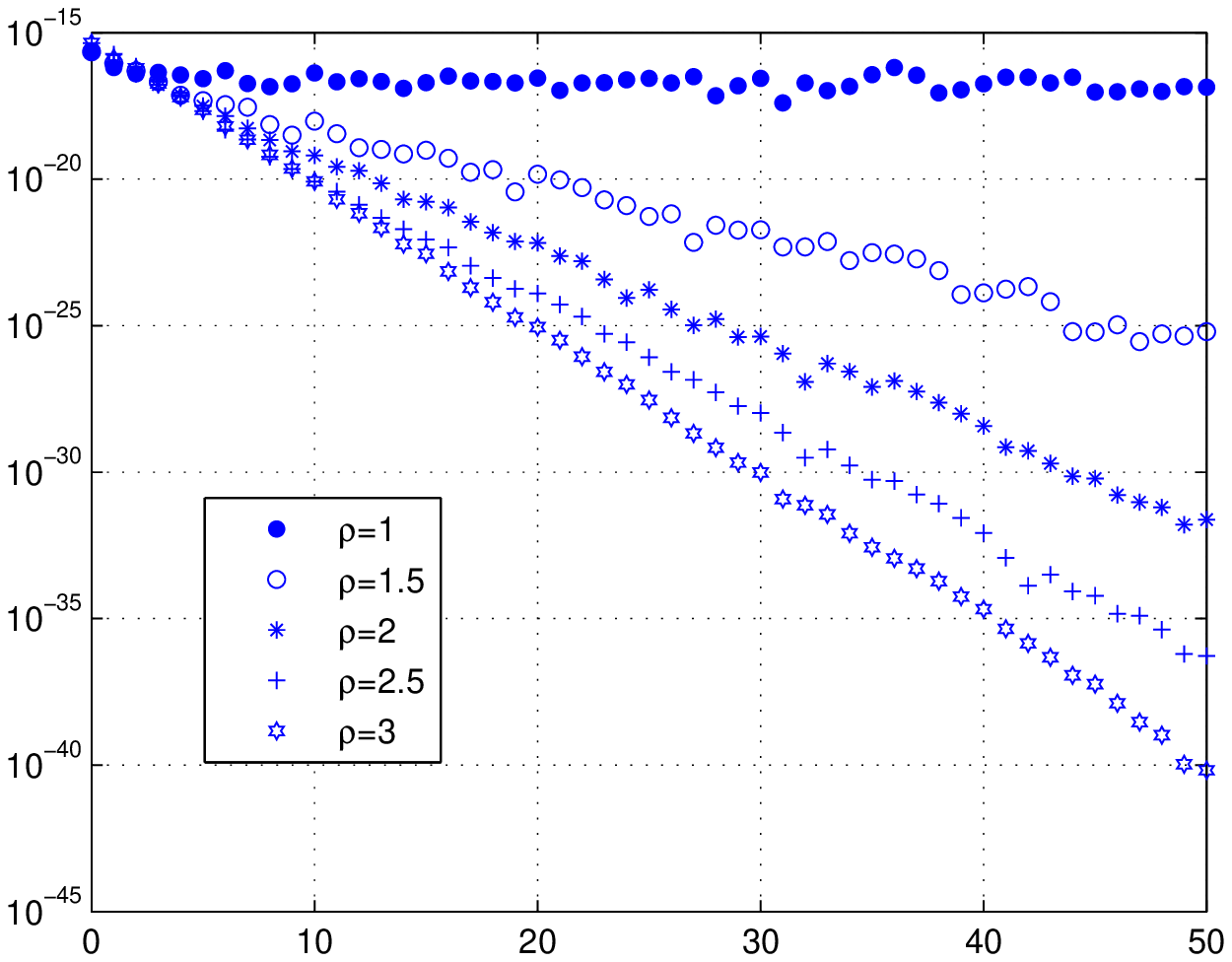}
\end{overpic}
~~
\begin{overpic}
[width=7.1cm,height=6cm]{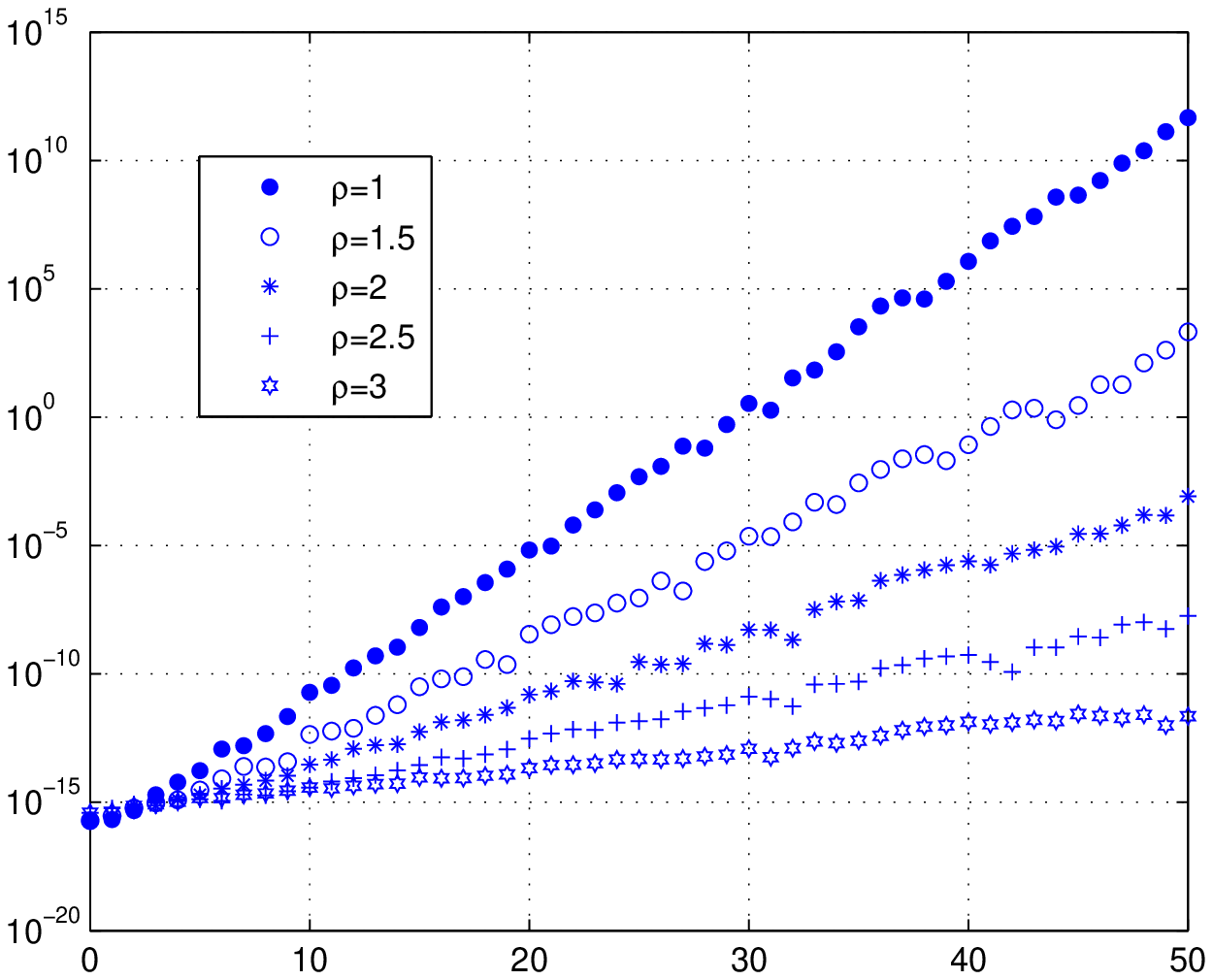}
\end{overpic}
\caption{Absolute errors (left) and relative errors (right) of the
computed Chebyshev coefficients $\{ a_k(m,\rho) \}_{k=0}^{N}$ for
the function $f(x) = \frac{1}{x-2}$. Here we choose $N=50$ and
$m=4N+2$. } \label{fig:Absolute errors of Chebyshev coefficients
two}
\end{figure}

Finally, we conclude this subsection with several remarks.
\begin{remark}
For transcendental functions, the computation of their Chebyshev
coefficients by a single $\rho$ may suffer from instability when
$\rho\gg1$.
\end{remark}

\begin{remark}
Numerical experiments show that, for a fixed $\rho$, it is
sufficient to choose $m=\mathcal{O}(N)$ such that the absolute
errors of the first $N+1$ Chebyshev coefficients are less than a
given tolerance uniformly. Thus the cost is $\mathcal{O}(N\log N)$
operations for computing the first $N+1$ Chebyshev expansion
coefficients.
\end{remark}

\begin{remark}
If we are concerned only with the absolute errors of Chebyshev
coefficients, it is sufficient to choose $\rho=1$ and $m =
\mathcal{O}(N)$. This leads to a fast algorithm which costs only
$\mathcal{O}(N\log N)$ operations for computing the first $N+1$
Chebyshev coefficients. However, if we are concerned with the
relative errors, the situation will change completely and it is
dangerous to choose $\rho=1$ since they have the fastest rate of
exponential growth.
\end{remark}

\begin{remark}
If $f(x)$ is analytic only in a neighborhood of $[-1,1]$, it is
possible to compute all Chebyshev coefficients $\{a_k\}_{k=0}^{N}$
by choosing a single $\rho$ such that their relative errors are less
than a given tolerance.
\end{remark}

\subsection{Maximizing the accuracy of Chebyshev coefficients}\label{sec:accuracy}
We can see from the above subsection that the relative errors of
Chebyshev coefficients may grow exponentially as $k$ grows if we
compute them by using the same $\rho$. To remedy this drawback, we
propose an alternative strategy and compute each Chebyshev
coefficient $a_k$ by using its optimal $\rho^{*}(k)$. This leads to
an accurate algorithm which minimizes the loss of accuracy with
respect to relative errors.

In Figure \ref{fig:Relative errors of Chebyshev coefficients} we
show relative errors of this strategy for computing the first $N+1$
Chebyshev coefficients of the functions $f(x) = e^x,~\frac{1}{x-2}$.
For the former function, each Chebyshev coefficient $a_k$ is
computed by \eqref{eq:trap for cheb1} with $\rho = 2k+1$ and
$m=2N+1$. For the latter function, each Chebyshev coefficient $a_k$
is evaluated by the trapezoidal rule \eqref{eq:trap for cheb1} with
the optimal radius \eqref{eq:rho_pole} and the number of points in
the trapezoidal rule is chosen as
\begin{align*}
m = \max\{ k( 3\log2 + \log k) \log\epsilon^{-1}, 50 \}, \quad 0
\leq k \leq 100,
\end{align*}
and we choose $\epsilon = 10 ^{-14}$. We can see that the Chebyshev
coefficients can be evaluated very accurately with respect to
relative errors.

\begin{figure}[h]
\centering
\begin{overpic}
[width=7.1cm,height=6cm]{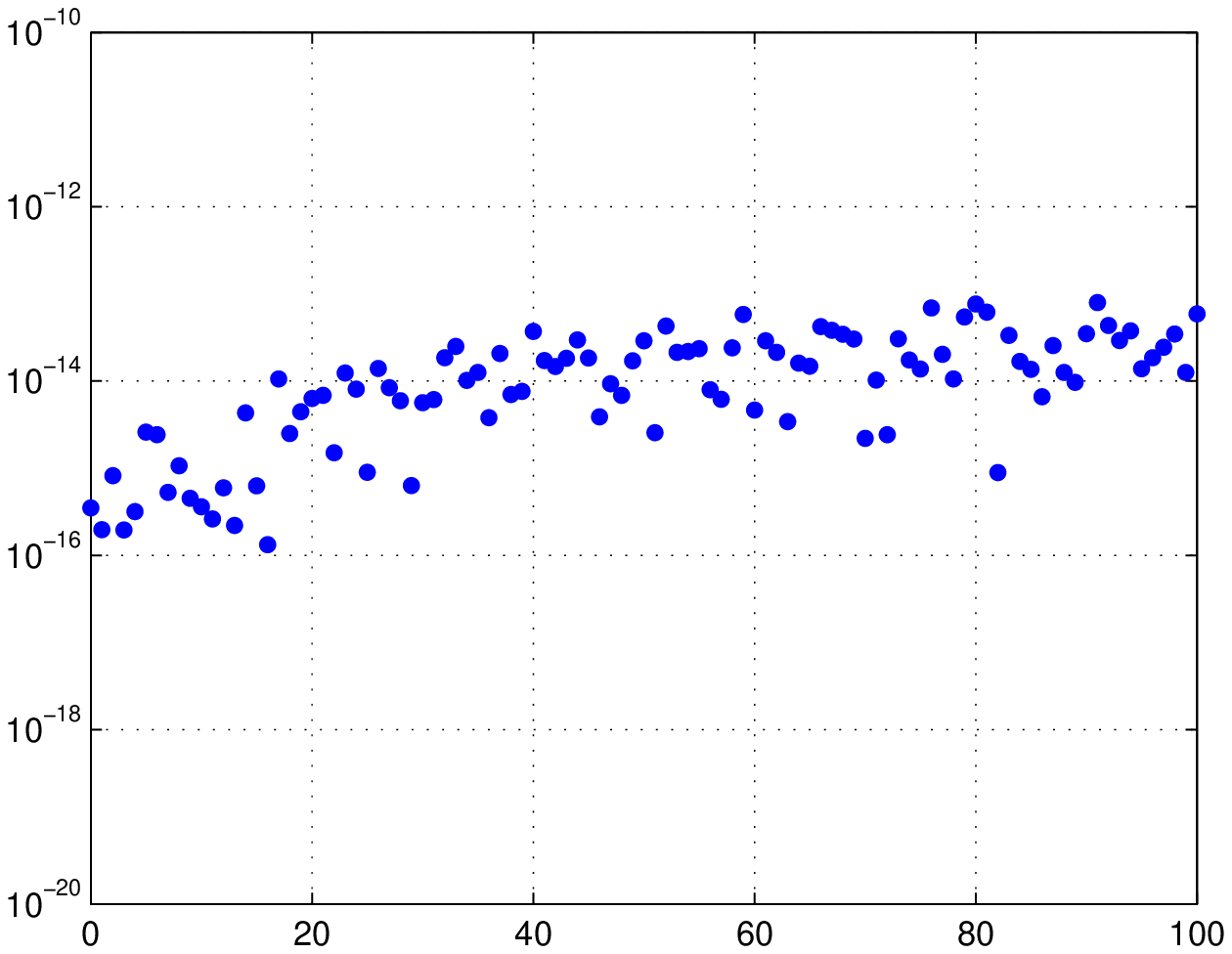}
\end{overpic}
~~
\begin{overpic}
[width=7.1cm,height=6cm]{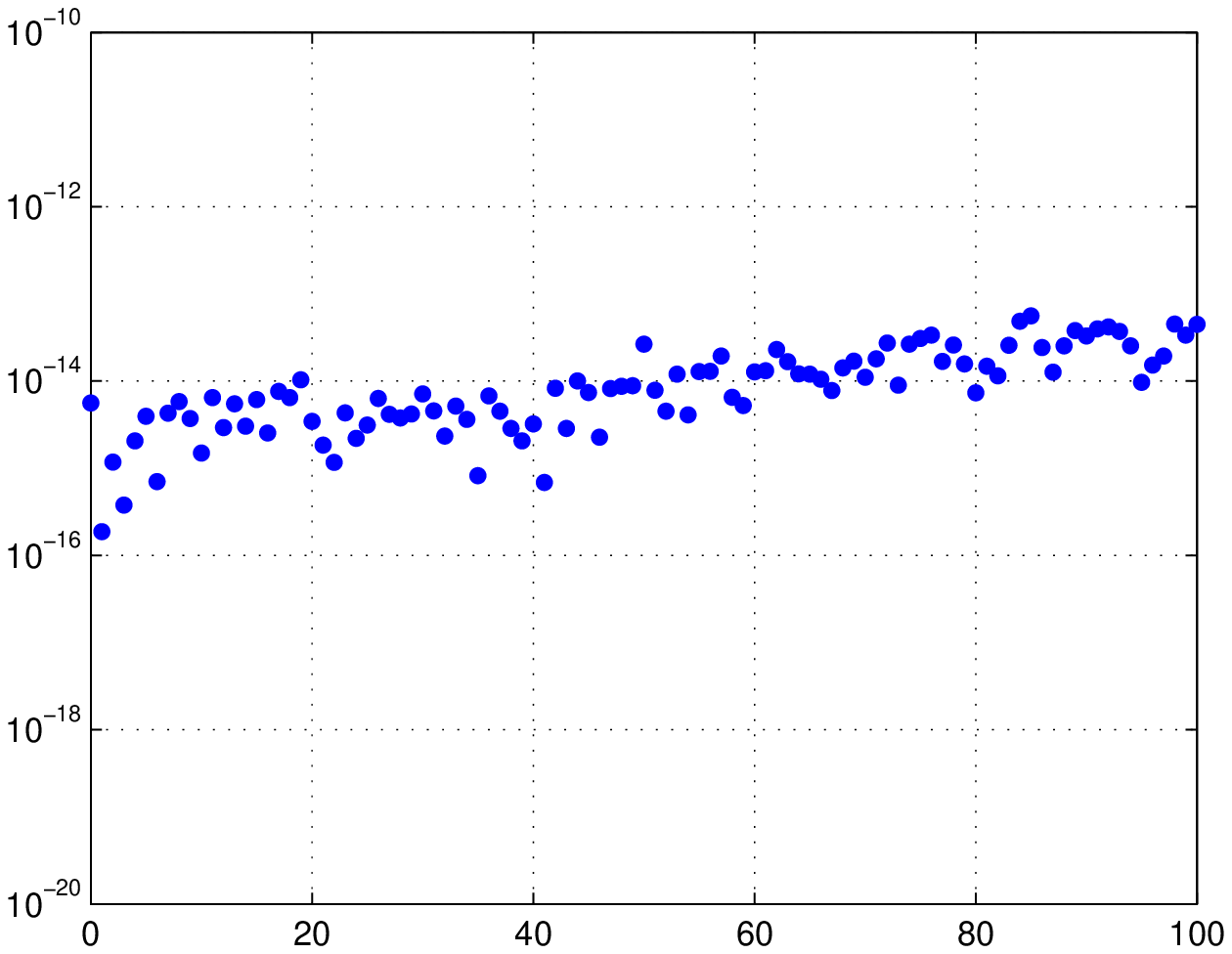}
\end{overpic}
\caption{Relative errors of the computed Chebyshev coefficients $\{
a_k(m,\rho^{*}(k)) \}_{k=0}^{N}$ of $f(x) = e^x$ (left) and $f(x) =
\frac{1}{x-2}$ (right). Here $N=100$. } \label{fig:Relative errors
of Chebyshev coefficients}
\end{figure}

Bornemann analyzes the number of quadrature points $m$ to use for the computation of the Taylor coefficient $a_n$ in terms of $n$, and this depends on the nature of the function, in particular its analyticity properties \cite[\S2]{bornemann2010highorderderivatives}. We found experimentally that these results can be reused in the setting of the computation of Chebyshev coefficients, and this has guided the choice of $m$ for the examples in the current paper.

\section{Chebyshev spectral differentiation}\label{sec:differentiation}
In this section we show some examples to illustrate the accuracy of
Chebyshev spectral differentiation based on the spectral expansions. 
Let
\[
f_N^{C}(x) = \sum_{k=0}^{N}{'} a_k T_k(x)
\]
denote the truncated Chebyshev expansion. Then the derivatives of
$f(x)$ can be approximated by the corresponding derivatives of
$f_N^{C}(x)$, e.g.
\[
f^{(s)}(x) \approx \frac{d^s}{dx^s}f_N^{C}(x).
\]
Let
\[
\frac{d^s}{dx^s}f_N^{C}(x) = \sum_{k=0}^{N}{'} a_k^{(s)} T_k(x).
\]
Then the coefficients $a_k^{(s)}$ can be evaluated by using the
following recurrence relation \cite[p.~498]{boyd2001spectral}
\begin{align}\label{eq:recurrence relation}
a_{k-1}^{(s)} = a_{k+1}^{(s)} + 2ka_{k}^{(s-1)}, \quad k =
N-s+1,\ldots,1,
\end{align}
where $a_{N-s+2}^{(s)} = a_{N-s+1}^{(s)} = 0$. 
Moreover, the initial coefficients are given by $a_k^{(0)} = a_k$
for $0 \leq k \leq N$.

\begin{example} We consider the accuracy of the Chebyshev spectral differentiation for the test function $f(x) =
e^x$. Each Chebyshev coefficient $a_k$ is evaluated by the
trapezoidal rule \eqref{eq:trap for cheb1} with the optimal radius
and the number of points in the trapezoidal rule is $m = 100$. In
Figure \ref{fig:accuracy of Chebyshev spectral diff for exponential
function} we present the pointwise errors in the evaluation of the
$s$-th order derivative of $f(x)$ by the truncated Chebyshev
spectral expansion $f_N^{C}(x)$. The error is measured at $100$
equispaced points in $[-1,1]$. As can be seen, the error of the
Chebyshev spectral differentiation is always very close to machine
precision.
\end{example}

\begin{figure}[h]
\centering
\begin{overpic}
[width=4.5cm]{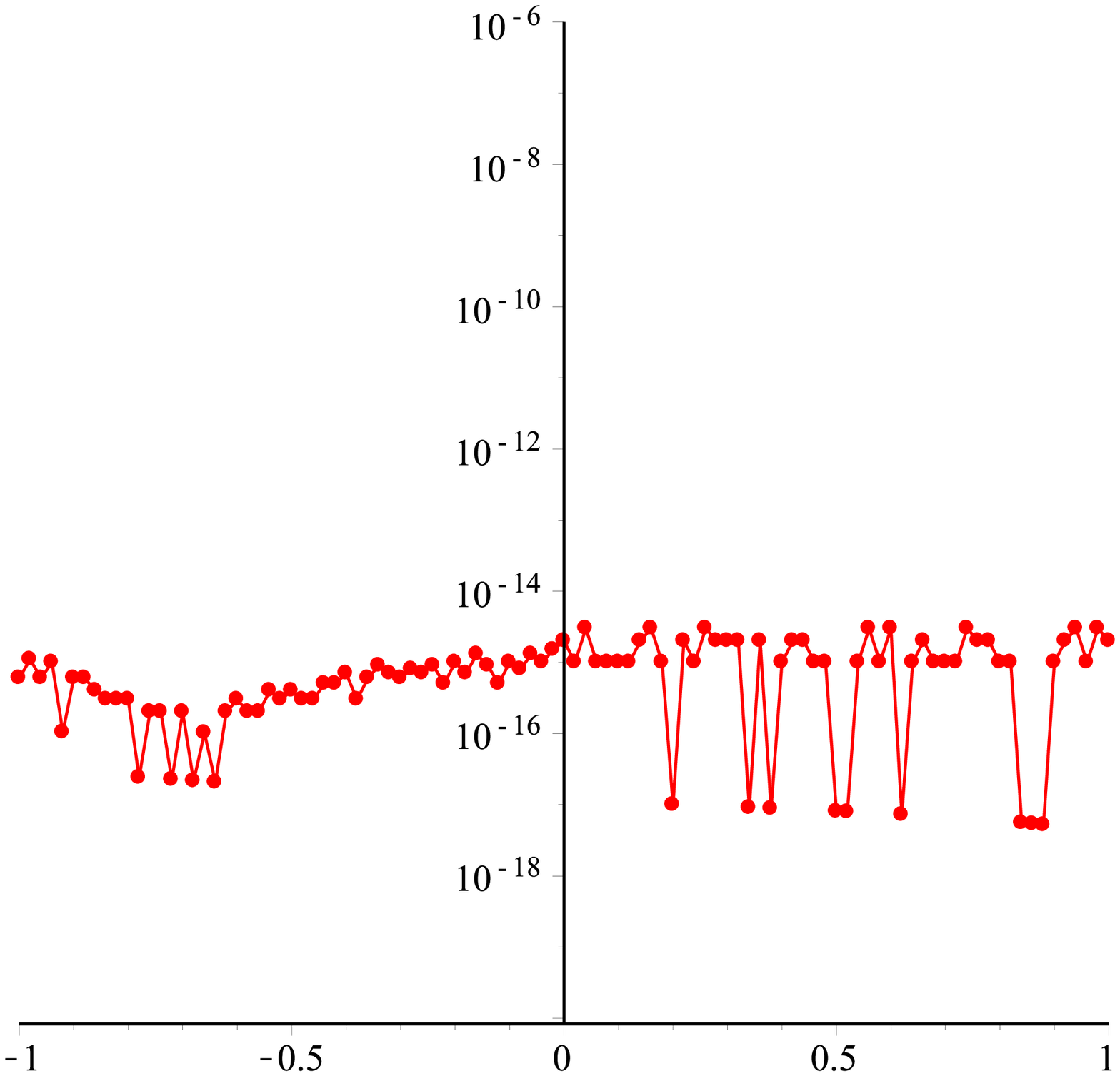}
\end{overpic}
\quad
\begin{overpic}
[width=4.5cm]{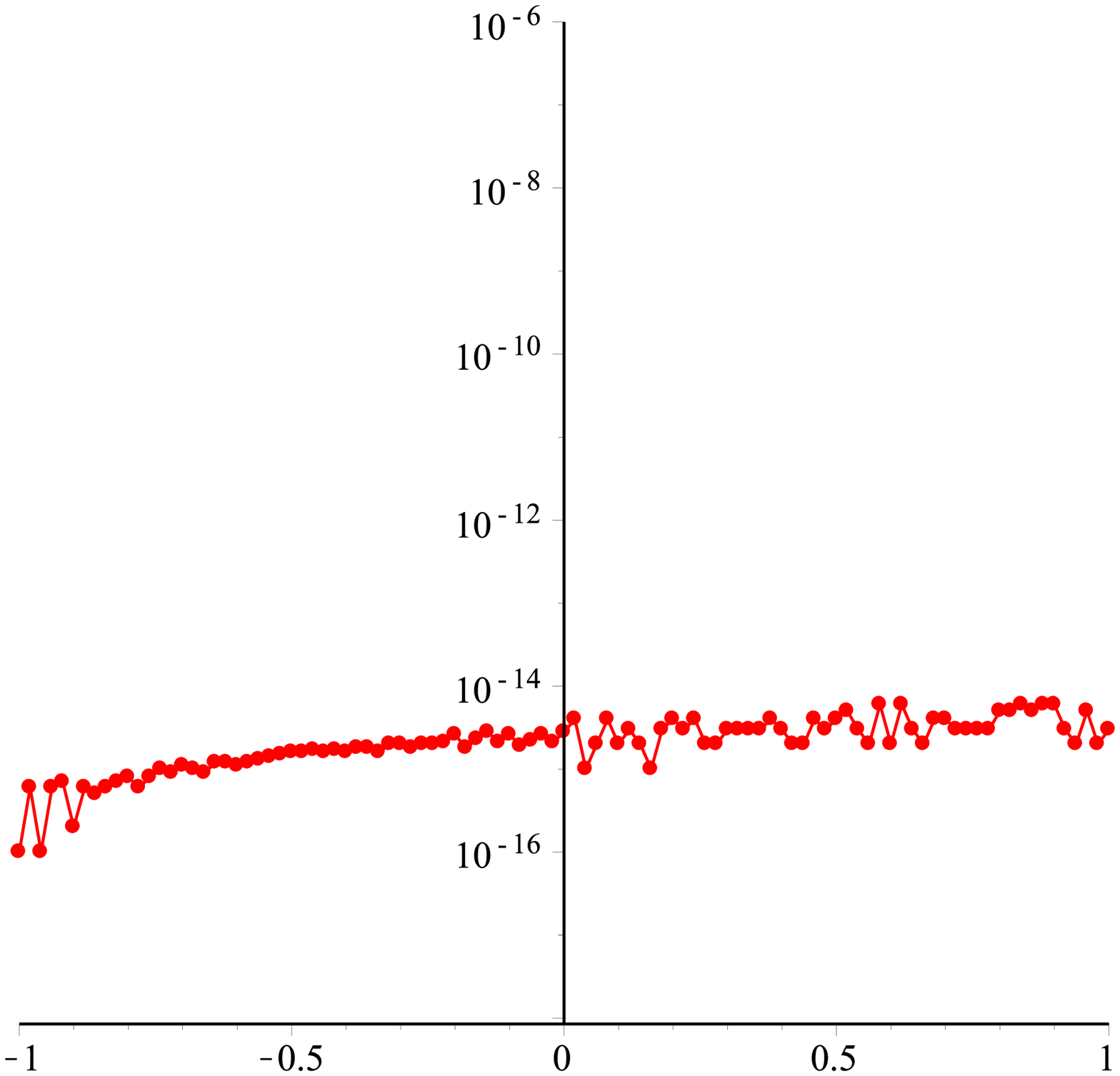}
\end{overpic}
\quad
\begin{overpic}
[width=4.5cm]{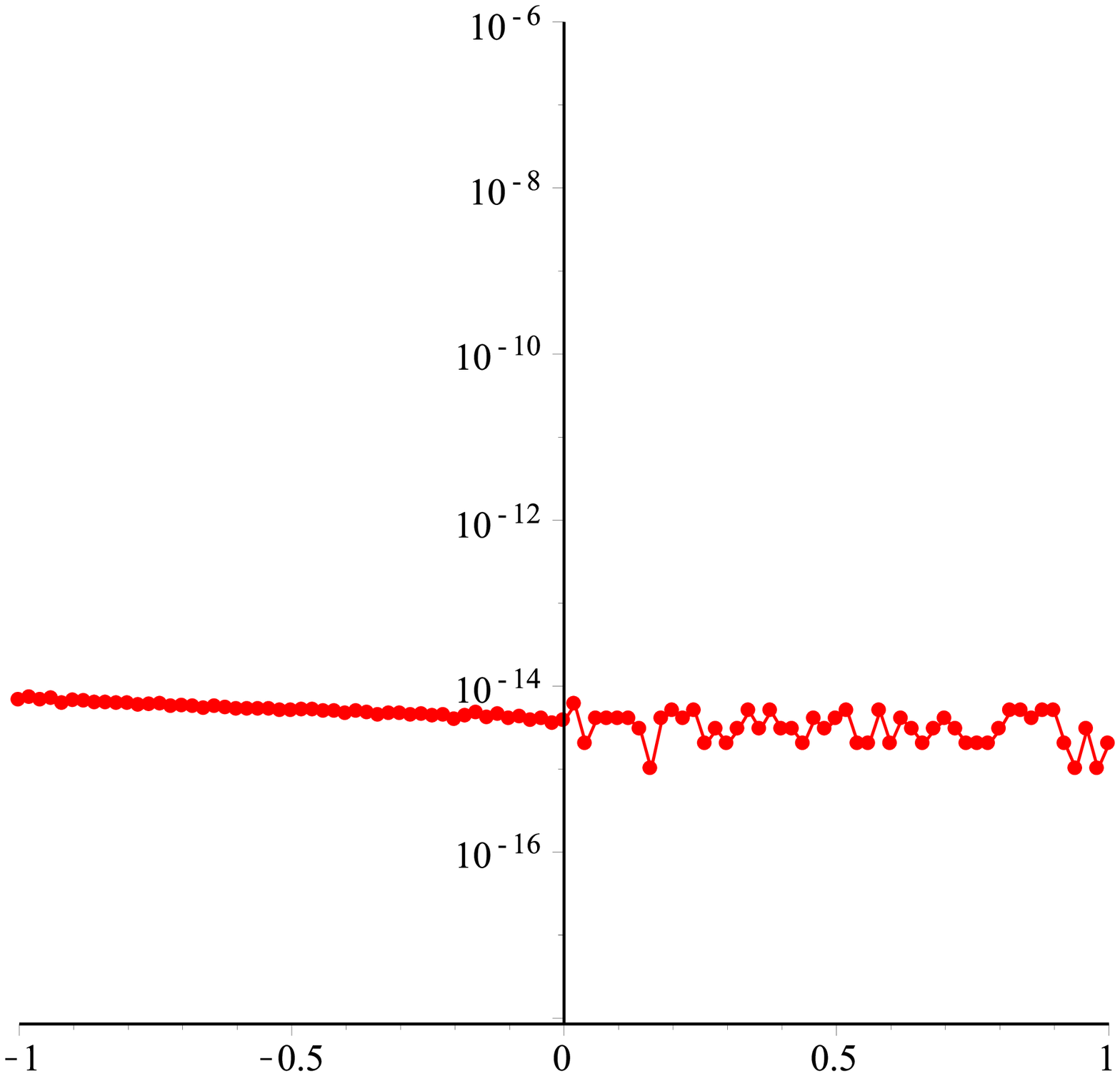}
\end{overpic}
\caption{Errors of the $s$-th order derivative of the truncated
Chebyshev expansion $f_N^{C}(x)$. Here we choose $N=100$ and $s=5$
(left), $s=20$ (middle) and $s=80$ (right).  } \label{fig:accuracy
of Chebyshev spectral diff for exponential function}
\end{figure}

\begin{example} We consider the accuracy of the Chebyshev spectral differentiation for the function $f(x) =
\cos (x)$. Each Chebyshev coefficient $a_k$ is evaluated by the
trapezoidal rule \eqref{eq:trap for cheb1} with the optimal radius
and the number of points in the trapezoidal rule is $m = 100$. In
Figure \ref{fig:accuracy of Chebyshev spectral diff for cosine
function} we present the pointwise errors in the evaluation of the
$s$-th order derivative of $f(x)$ by the truncated Chebyshev
spectral expansion $f_N^{C}(x)$.
\end{example}

\begin{figure}[h]
\centering
\begin{overpic}
[width=4.5cm]{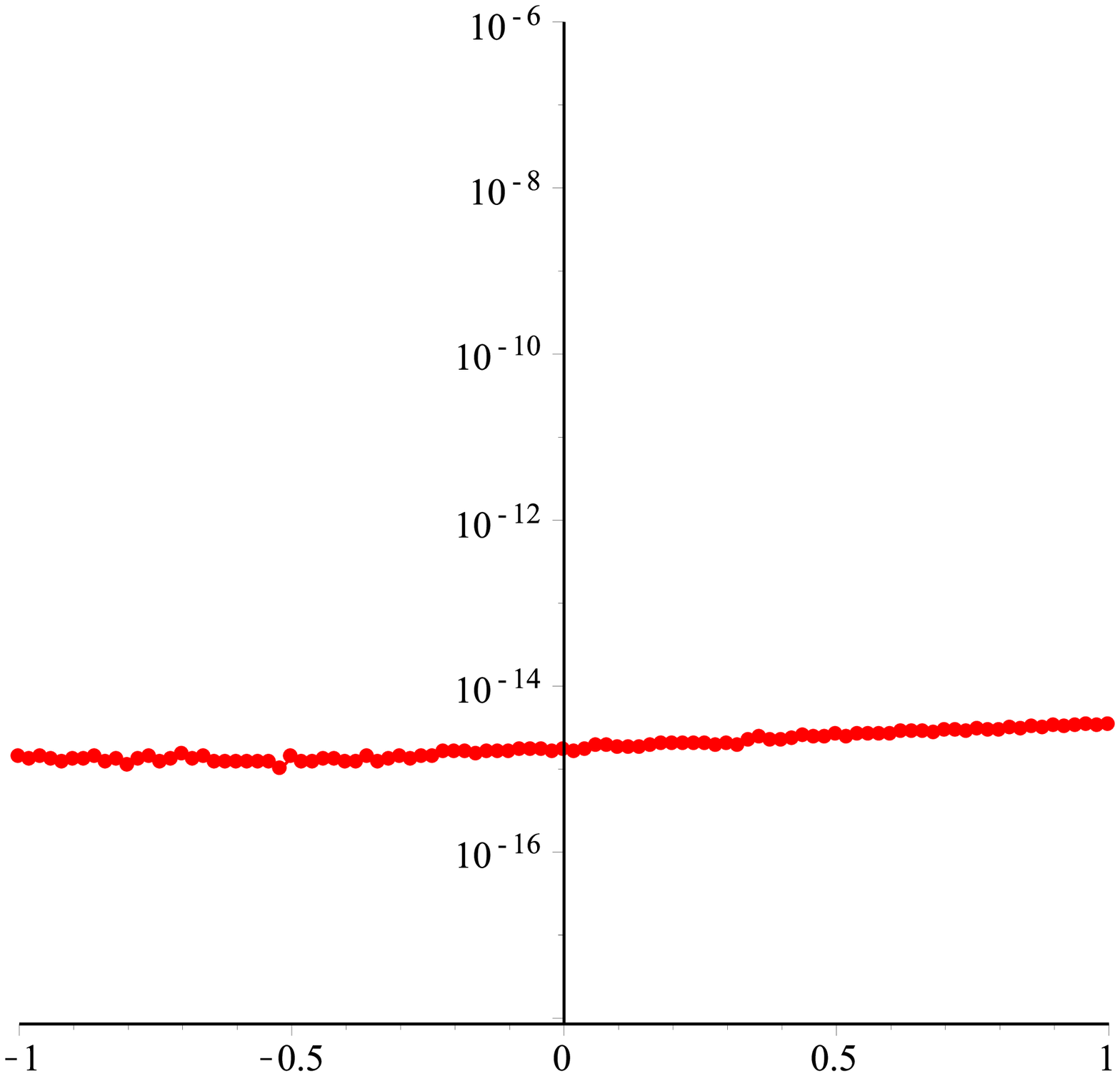}
\end{overpic}
\quad
\begin{overpic}
[width=4.5cm]{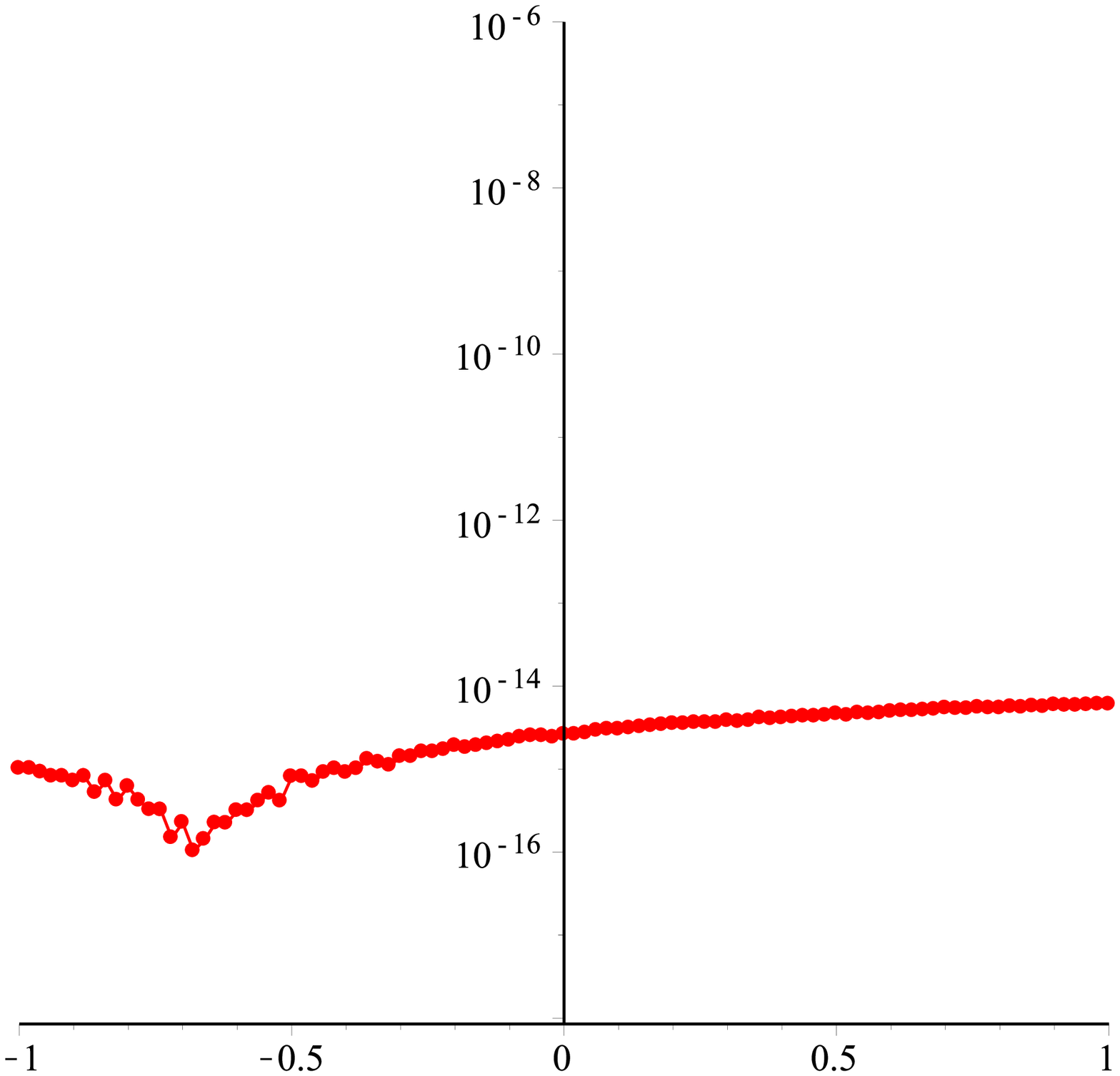}
\end{overpic}
\quad
\begin{overpic}
[width=4.5cm]{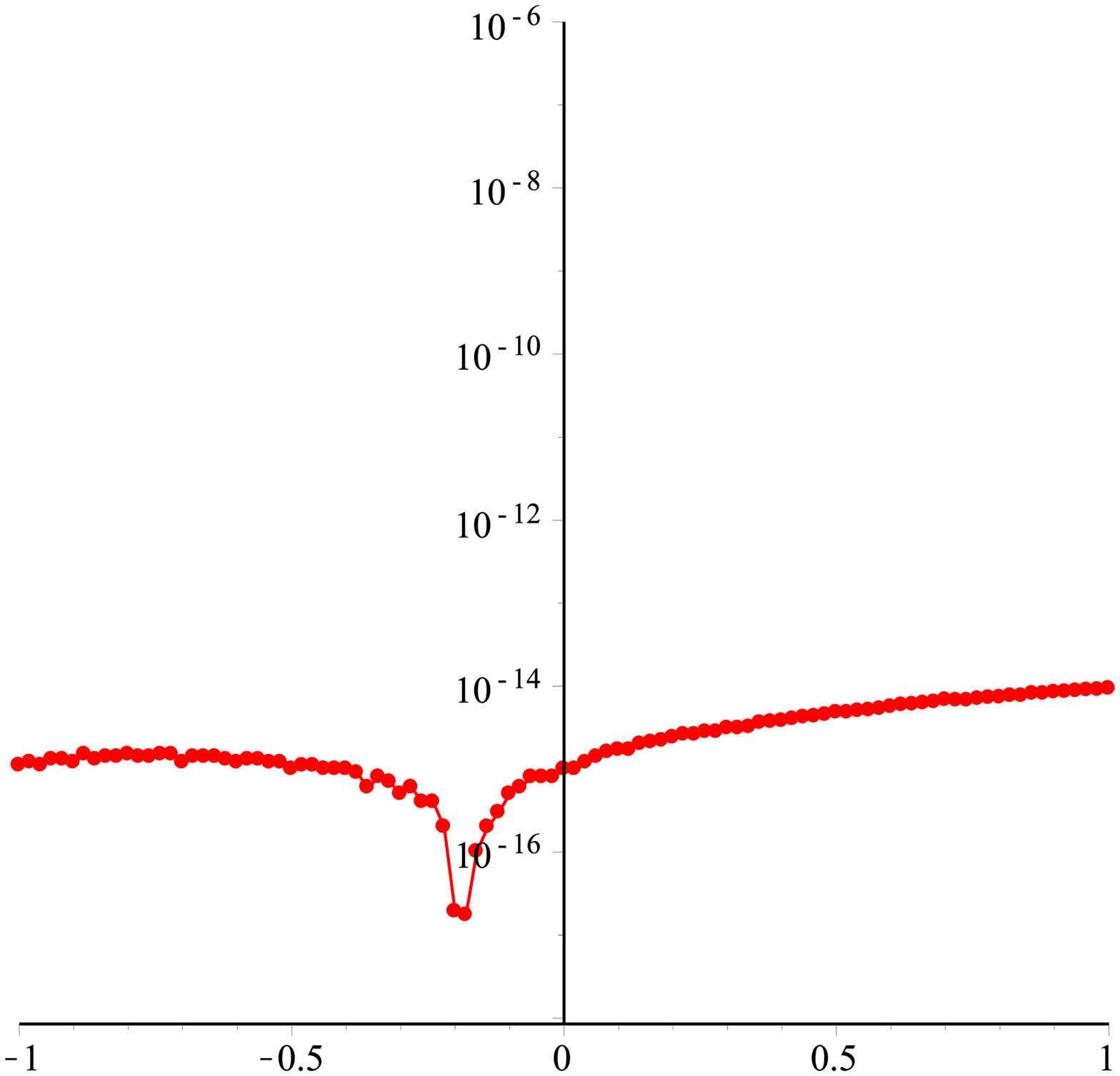}
\end{overpic}
\caption{Errors of the $s$-th order derivative of the truncated
Chebyshev expansion $f_N^{C}(x)$. Here we choose $N=100$ and $s=10$
(left), $s=40$ (middle) and $s=80$ (right).  } \label{fig:accuracy
of Chebyshev spectral diff for cosine function}
\end{figure}

\begin{example} Finally, we consider the accuracy of the Chebyshev spectral differentiation for the test function $f(x) =
\frac{x+1}{x^2+4} $. Each Chebyshev coefficient $a_k$ is evaluated
by the trapezoidal rule \eqref{eq:trap for cheb1} with the optimal
radius and the number of points in the trapezoidal rule is chosen as
\begin{align*}
m = \max\{ n( 3\log2 + \log n) \log\epsilon^{-1}, 50 \}, \quad 0
\leq n \leq 100,
\end{align*}
and we choose $\epsilon = 10 ^{-16}$. The pointwise error of the
Chebyshev spectral differentiation in the evaluation of the $s$-th
order derivative of $f(x)$ is displayed in Figure \ref{fig:accuracy
of Chebyshev spectral diff for pole function}.
\end{example}

\begin{figure}[h]
\centering
\begin{overpic}
[width=4.5cm]{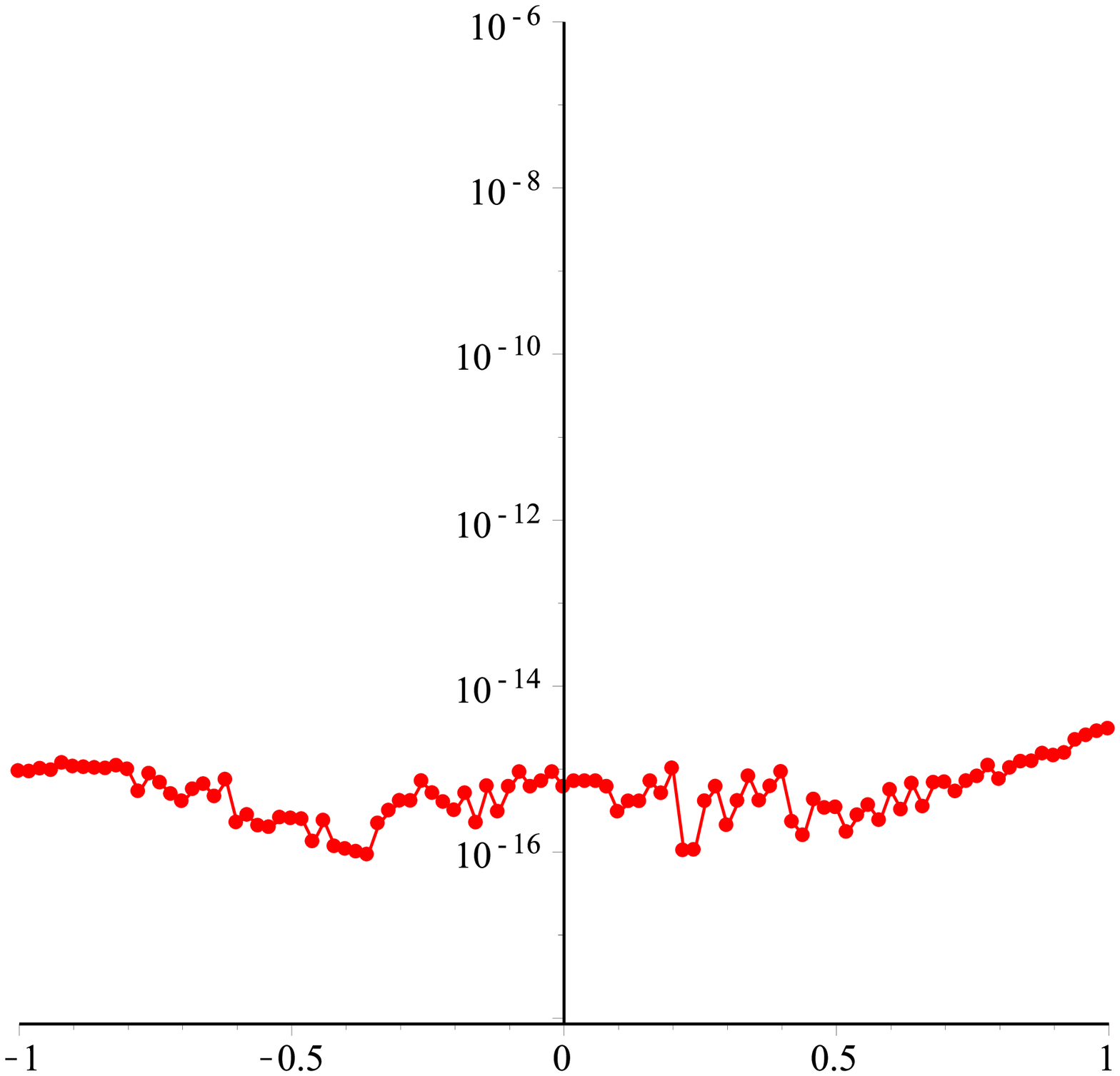}
\end{overpic}
\quad
\begin{overpic}
[width=4.5cm]{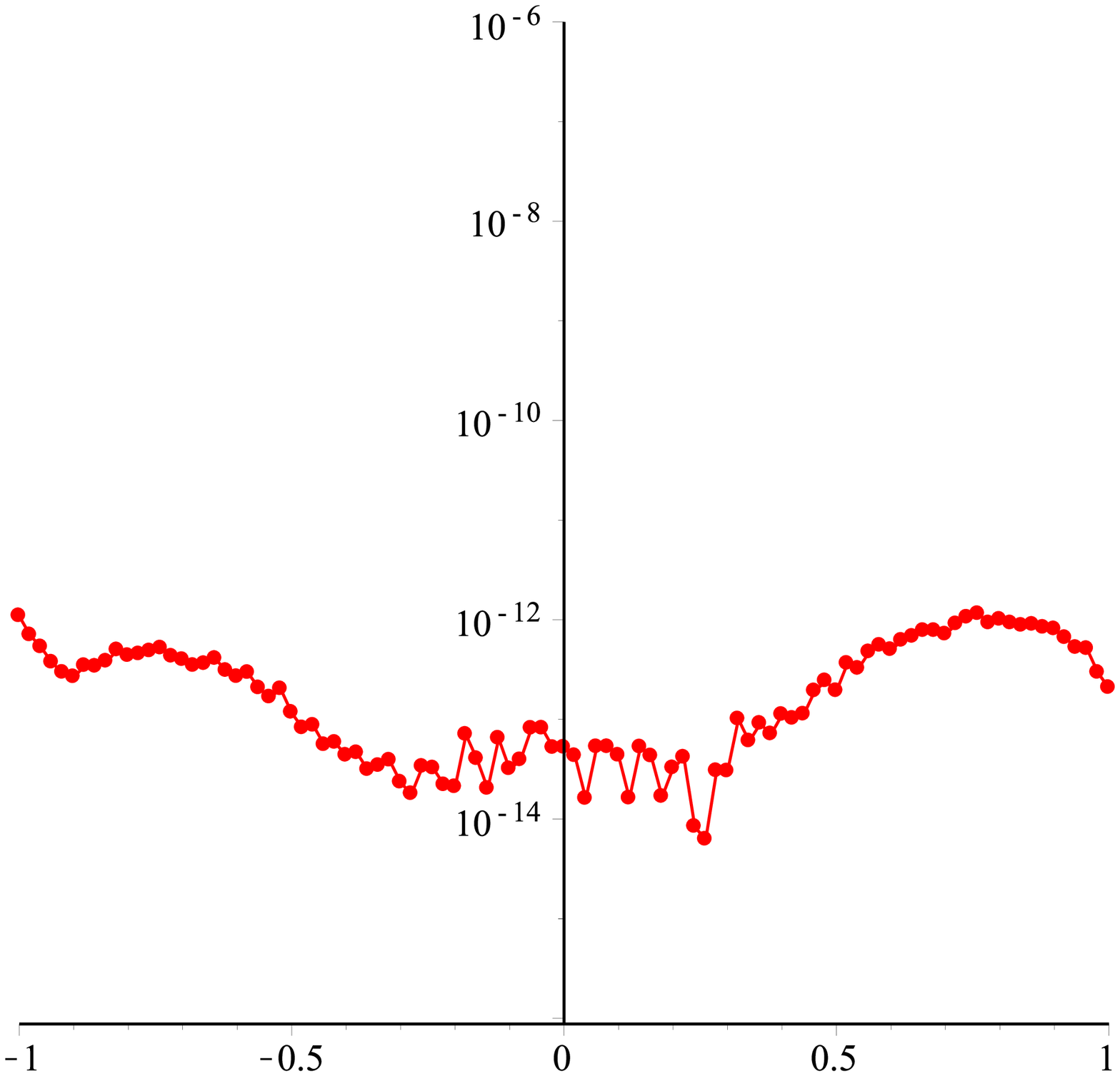}
\end{overpic}
\quad
\begin{overpic}
[width=4.5cm]{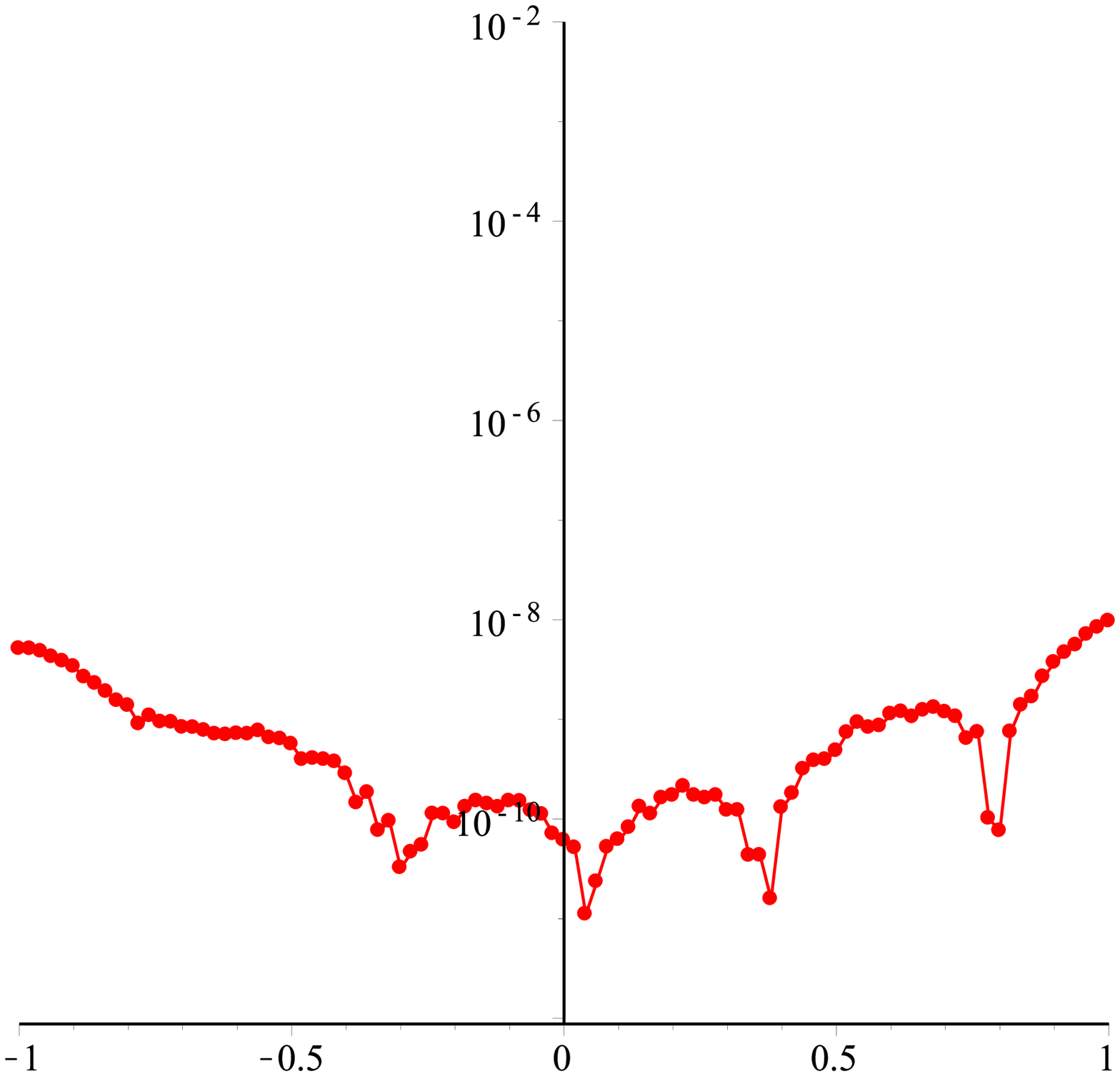}
\end{overpic}
\caption{Errors of the $s$-th order derivative of the truncated
Chebyshev expansion $f_N^{C}(x)$. Here we choose $N=100$ and $s=4$
(left), $s=8$ (middle) and $s=12$ (right).  } \label{fig:accuracy of
Chebyshev spectral diff for pole function}
\end{figure}

\section{Computing the roots of derivatives of analytic functions}\label{sec:roots}
One powerful application of the truncated Chebyshev expansion of an analytic function $f(x)$ is that it can be used to
compute the roots of $f(x)$ on the interval $[-1,1]$. The main idea
is that the roots of a Chebyshev series are the eigenvalues of a
colleague matrix whose elements are simple functions of the
coefficients of the Chebyshev series. For the sake of clarity, we
state it in the following.
\begin{theorem}\label{thm:colleague matrix}
The roots of the Chebyshev series
\[
p(x) = \sum_{k=0}^{n} c_k T_k(x), \quad c_n \neq 0,
\]
are the eigenvalues of the following colleague matrix
\begin{equation}
A = \begin{pmatrix}
  0            &  1            &               &                  &             &               \\
  \frac{1}{2}  &  0            &   \frac{1}{2} &                  &             &               \\
               & \frac{1}{2}   &  0            &   \frac{1}{2}    &             &               \\
               &               &  \ddots       &  \ddots          & \ddots      &               \\
               &               &               &                  &             &  \frac{1}{2}  \\
               &               &               &                  & \frac{1}{2} &  0            \\
 \end{pmatrix} -
 \frac{1}{2c_n}
 \begin{pmatrix}
               &            &         &         &             \\
               &            &         &         &             \\
               &            &         &         &             \\
               &            &         &         &              \\
               &            &         &         &              \\
           c_0 & c_1        & c_2     & \cdots  &   c_{n-1}   \\
 \end{pmatrix}.
\end{equation}
If there are multiple roots, these correspond to eigenvalues with
the same multiplicities.
\end{theorem}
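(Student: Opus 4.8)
The plan is to exploit the three-term recurrence $x T_k(x) = \tfrac{1}{2}(T_{k-1}(x) + T_{k+1}(x))$ for $k \geq 1$, together with $x T_0(x) = T_1(x)$, in order to show that the action of $A$ on the vector of Chebyshev polynomials reproduces multiplication by $x$ up to a single residual term proportional to $p(x)$. Concretely, I would set $\vec{v}(x) = (T_0(x), T_1(x), \ldots, T_{n-1}(x))^{T}$ and verify, row by row, that the symmetric tridiagonal part of $A$ implements the recurrence exactly in rows $0$ through $n-2$. The only subtlety is the last row: the recurrence there calls for $T_n(x)$, which is not a component of $\vec{v}(x)$. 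This is precisely where the rank-one correction enters, and re-expressing the would-be $T_n(x)$ term through $p(x) = \sum_{k=0}^{n} c_k T_k(x)$ should yield the clean identity
\[
 A\,\vec{v}(x) = x\,\vec{v}(x) - \frac{p(x)}{2 c_n}\,\vece_{n-1},
\]
where $\vece_{n-1}$ is the last standard basis vector.

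From this identity both directions follow quickly. If $p(x_0) = 0$, then $(x_0 I - A)\,\vec{v}(x_0) = 0$ with $\vec{v}(x_0) \neq 0$ (its first entry is $T_0 \equiv 1$), so every root of $p$ is an eigenvalue of $A$. For the converse and, crucially, for the multiplicity claim, I would upgrade the pointwise identity to an identity of characteristic polynomials. Multiplying $(xI - A)\,\vec{v}(x) = \tfrac{p(x)}{2 c_n}\,\vece_{n-1}$ by the adjugate $\mathrm{adj}(xI - A)$ gives $\det(xI - A)\,\vec{v}(x) = \tfrac{p(x)}{2 c_n}\,\mathrm{adj}(xI - A)\,\vece_{n-1}$, and reading off the first component (again using $T_0 \equiv 1$) yields
\[
 \det(xI - A) = \frac{p(x)}{2 c_n}\,[\mathrm{adj}(xI - A)]_{0,n-1}.
\]
The bracketed cofactor is, up to sign, the minor obtained by deleting the last row and first column of $xI - A$; deleting the last row removes the entire rank-one correction, leaving a lower-triangular matrix whose determinant I expect to be an explicit constant of the form $\pm 2^{-(n-2)}$. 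Substituting this should produce $\det(xI - A) = p(x)/(2^{n-1} c_n)$, which is consistent with both sides being monic of degree $n$, since $T_n$ has leading coefficient $2^{n-1}$.

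Once the characteristic polynomial of $A$ is shown to be a fixed nonzero scalar multiple of $p$, the theorem follows at once: the eigenvalues of $A$ are exactly the roots of $p$, and the algebraic multiplicity of each eigenvalue equals its multiplicity as a root of $p$. I expect the main obstacle to be the bookkeeping in the cofactor computation — correctly identifying which entries survive the deletion of row $n-1$ and column $0$, relabelling the shifted indices, and confirming that the surviving matrix is triangular with the claimed diagonal. An alternative that sidesteps the determinant is to treat first the generic case of $n$ distinct roots, where the vectors $\vec{v}(x_0)$ at distinct roots are linearly independent so that $A$ is diagonalizable with the correct spectrum, and then to recover the multiplicity statement by a continuity argument; however, the adjugate route seems cleaner since it delivers the multiplicities directly.
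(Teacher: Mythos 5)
Your proposal is correct, and it is worth noting that the paper itself contains no proof at all: it simply defers to Trefethen, \emph{Approximation Theory and Approximation Practice}, Thm.~18.1. Your key identity $A\,\vec{v}(x) = x\,\vec{v}(x) - \tfrac{p(x)}{2c_n}\vece_{n-1}$ with $\vec{v}(x)=(T_0(x),\dots,T_{n-1}(x))^T$ is exactly the identity underlying that cited proof (rows $0$ through $n-2$ encode $xT_0=T_1$ and $xT_k=\tfrac12(T_{k-1}+T_{k+1})$; in the last row the substitution $c_nT_n = p - \sum_{k<n}c_kT_k$ produces the rank-one correction). Where you genuinely diverge is in the completion: the textbook argument proves the converse by showing that any eigenvector of $A$ must have the Chebyshev structure $v_k = v_0 T_k(\lambda)$ and handles multiplicities only by an auxiliary limiting/continuity remark, whereas your adjugate computation yields the sharper, self-contained identity $\det(xI-A) = p(x)/(2^{n-1}c_n)$, from which equality of spectra \emph{with multiplicities} is immediate. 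Your cofactor bookkeeping does check out: deleting the last row of $xI-A$ kills the rank-one part, and after deleting the first column the surviving $(n-1)\times(n-1)$ block is lower triangular with diagonal $(-1,-\tfrac12,\dots,-\tfrac12)$ (the $x$-entries are pushed below the diagonal), so the relevant minor is $(-1)(-\tfrac12)^{n-2}$ and the cofactor equals $2^{-(n-2)}$, giving precisely the monic normalization $p(x)/(2^{n-1}c_n)$ you predicted. The only caveat, which applies equally to the theorem as displayed, is that the argument presumes $n\geq 2$ so that the corrected last row is distinct from the special first row (for $n=1$ the first-row coefficient $1$ rather than $\tfrac12$ would require the correction $-c_0/c_1$, not $-c_0/(2c_1)$).
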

\begin{proof}
See \cite[Thm.~18.1]{trefethen2012atap}.
\end{proof}

In practice, it is of particular interest to compute the roots of
derivatives of a smooth function. For example, the roots of the
first and second order derivatives of a function correspond exactly
to its maxima and inflexion points. In the following, we show the
performance of our methods applied to the computation of derivatives
of a transcendental function. As shown in the above section, the
Chebyshev coefficients of the $s$-th order derivatives of
$f_N^{C}(x)$ can be computed via the recurrence relation
\eqref{eq:recurrence relation} and thus the roots of
$\frac{d^s}{dx^s}f_N^{C}(x)$ can be computed by using Theorem
\ref{thm:colleague matrix}.

\begin{example}
Consider
\[
f(x) = e^{2x} + \cos(2x+3), \quad x\in[-1,1],
\]
For each Chebyshev coefficient $a_n$ of $f(x)$, it is not difficult
to deduce that the optimal radius is $\rho^{*}(n) = n+\frac{1}{2}$.
In the following we present several numerical results on the
computation of $s$th order derivative of $f(x)$. For comparison, we
perform the computations with two different approaches when compute
the Chebyshev coefficients of $f(x)$:
\begin{enumerate}
\item We compute each $a_k$ by using its optimal radius
$\rho^{*}(k)$;

\item We compute all $a_k$ by choosing the same radius
$\rho = 1$ (we use the sample points of $f(x)$ on the interval
$[-1,1]$);

\end{enumerate}
In our computations, each $a_k$ is evaluated by using the
trapezoidal rule with $m = 100$. Numerical results are presented in
Figure \ref{fig:Cheb root derivatives one}. As can be seen, our
approach is advantageous when we compute the roots of derivatives.
In Figure \ref{fig:Cheb root derivatives two} we illustrate the
results for the roots of higher order derivatives.

\end{example}

\begin{figure}[h]
\centering
\begin{overpic}
[width=7cm]{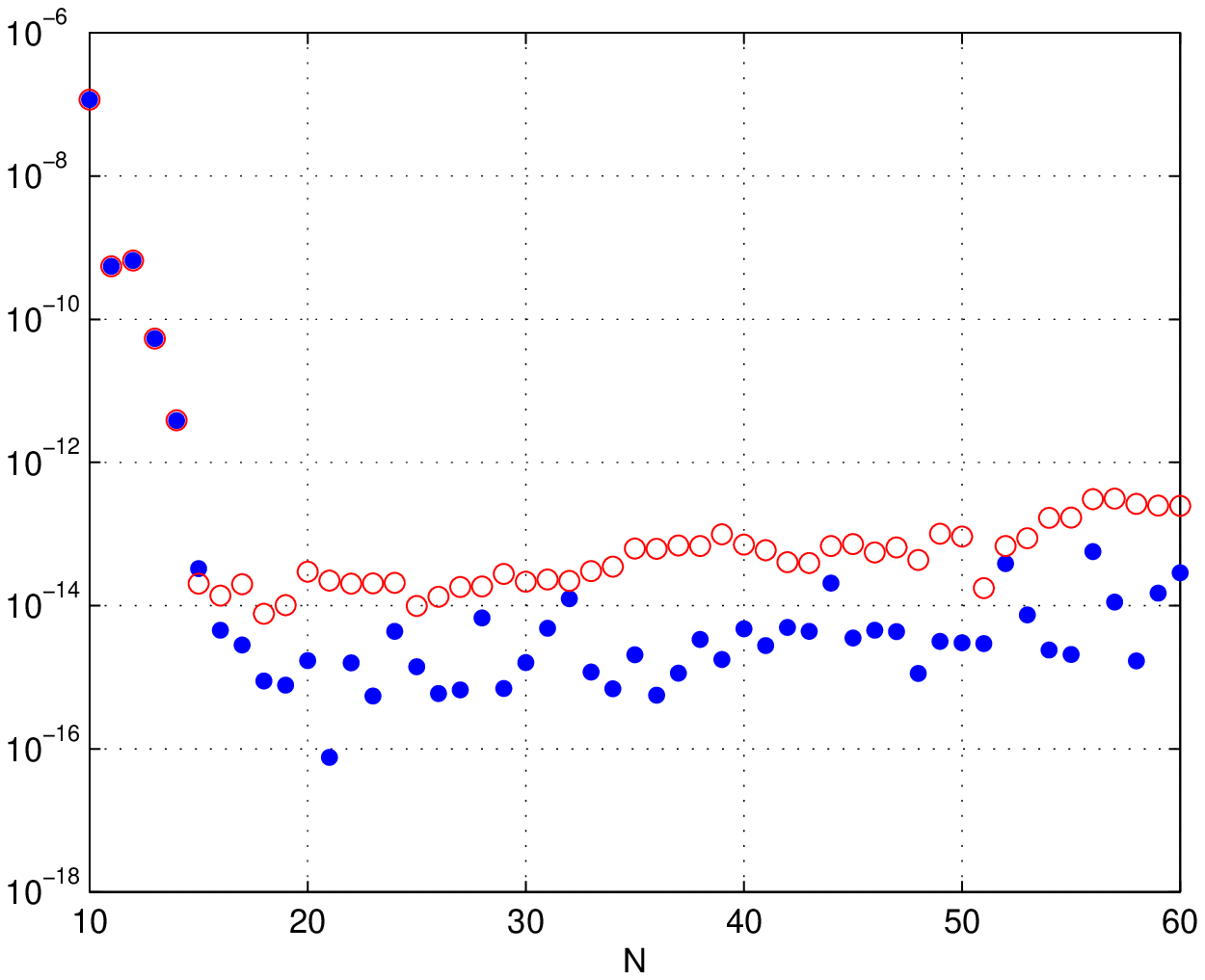}
\end{overpic}
\quad
\begin{overpic}
[width=7cm]{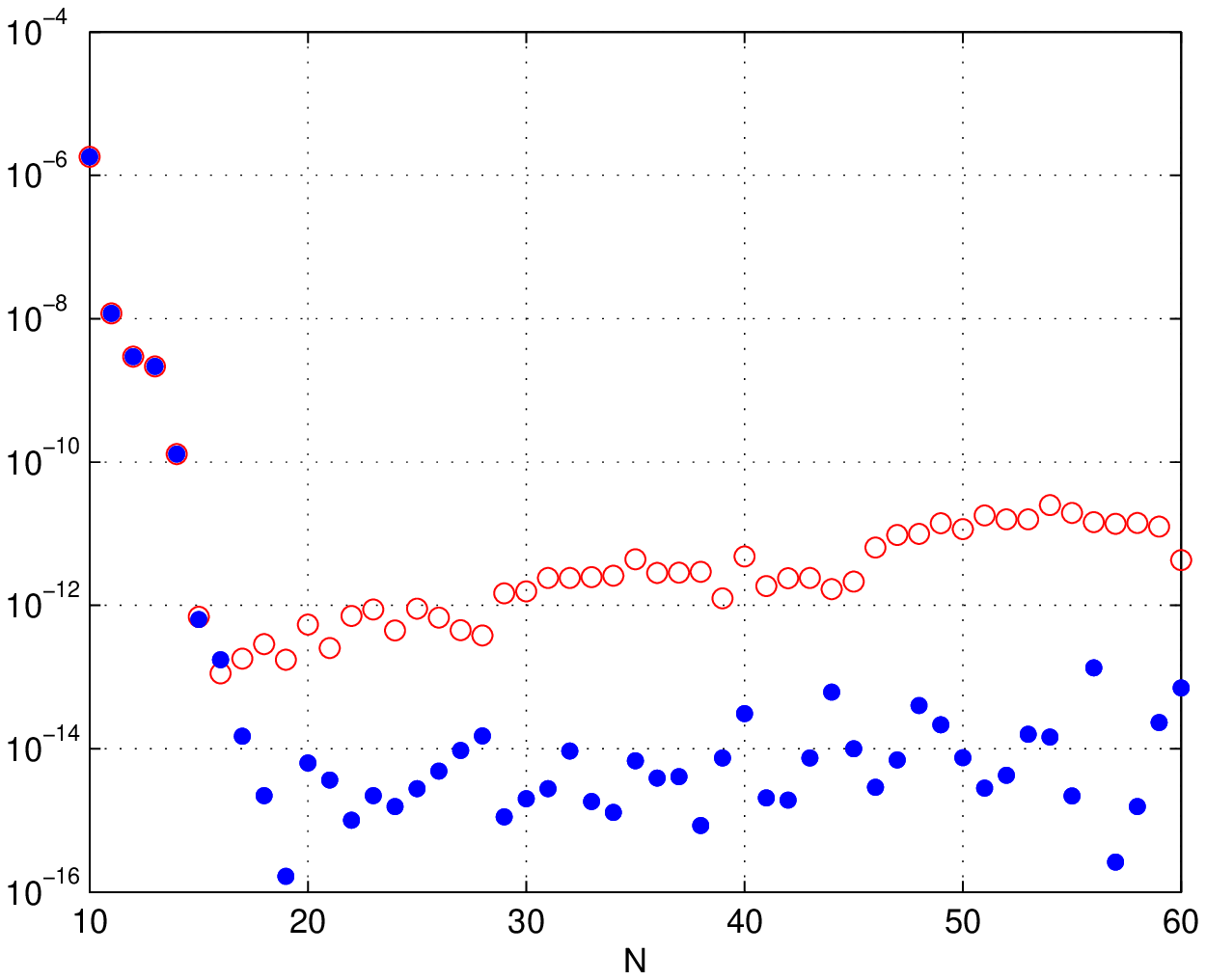}
\end{overpic}
\caption{Errors of the root of the $s$th order derivative of the
truncated Chebyshev expansion $f_N^{C}(x)$ for $s=1$ (left) and
$s=2$ (right). The dots denote the results of the strategy that each
$a_k$ is evaluated by the $\rho^{*}(k)$ and the circles denote the
strategy that all $\{a_k\}_{k=0}^{N}$ are computed by setting
$\rho=1$. } \label{fig:Cheb root derivatives one}
\end{figure}

\begin{figure}[h]
\centering
\begin{overpic}
[width=7cm]{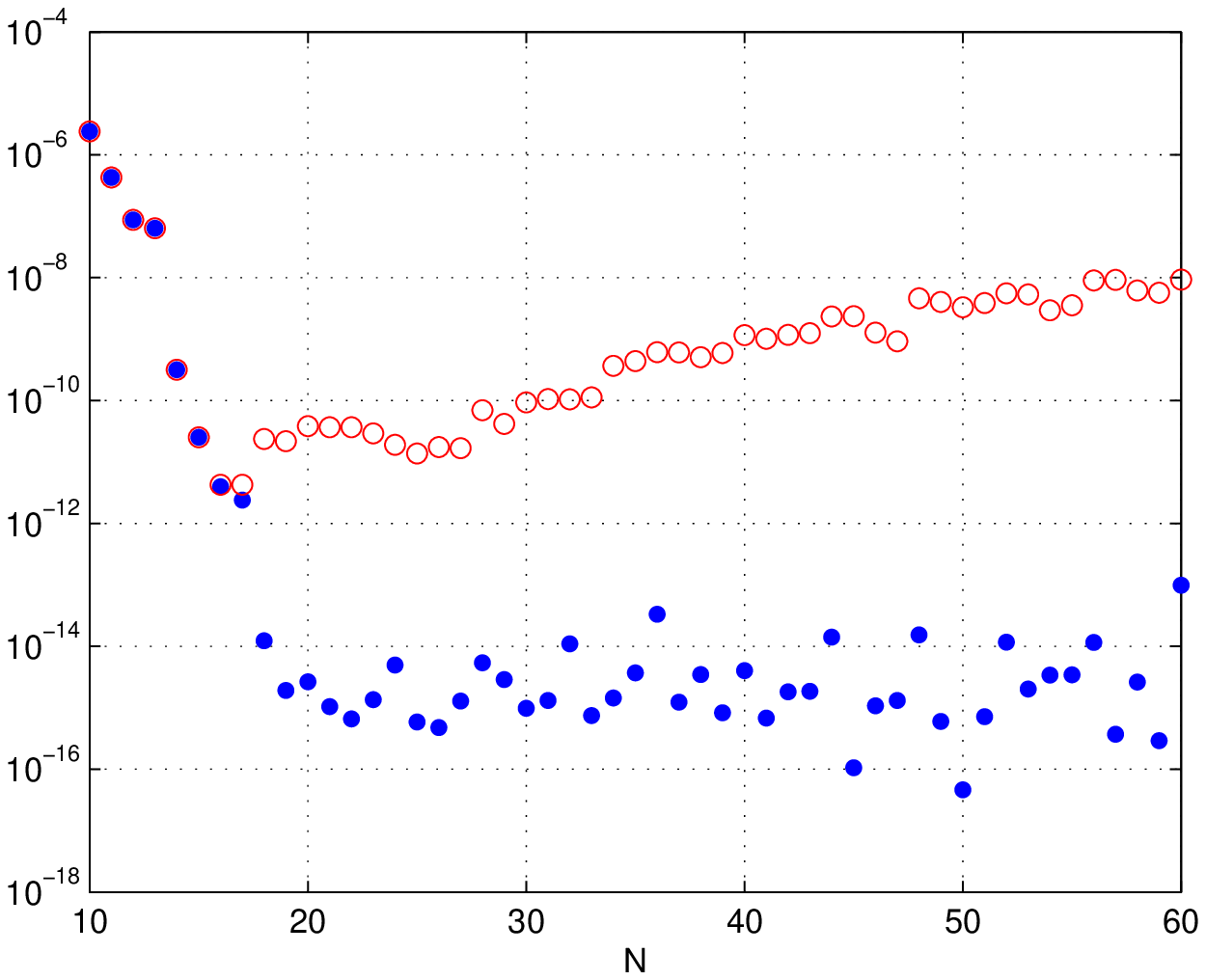}
\end{overpic}
\quad
\begin{overpic}
[width=7cm]{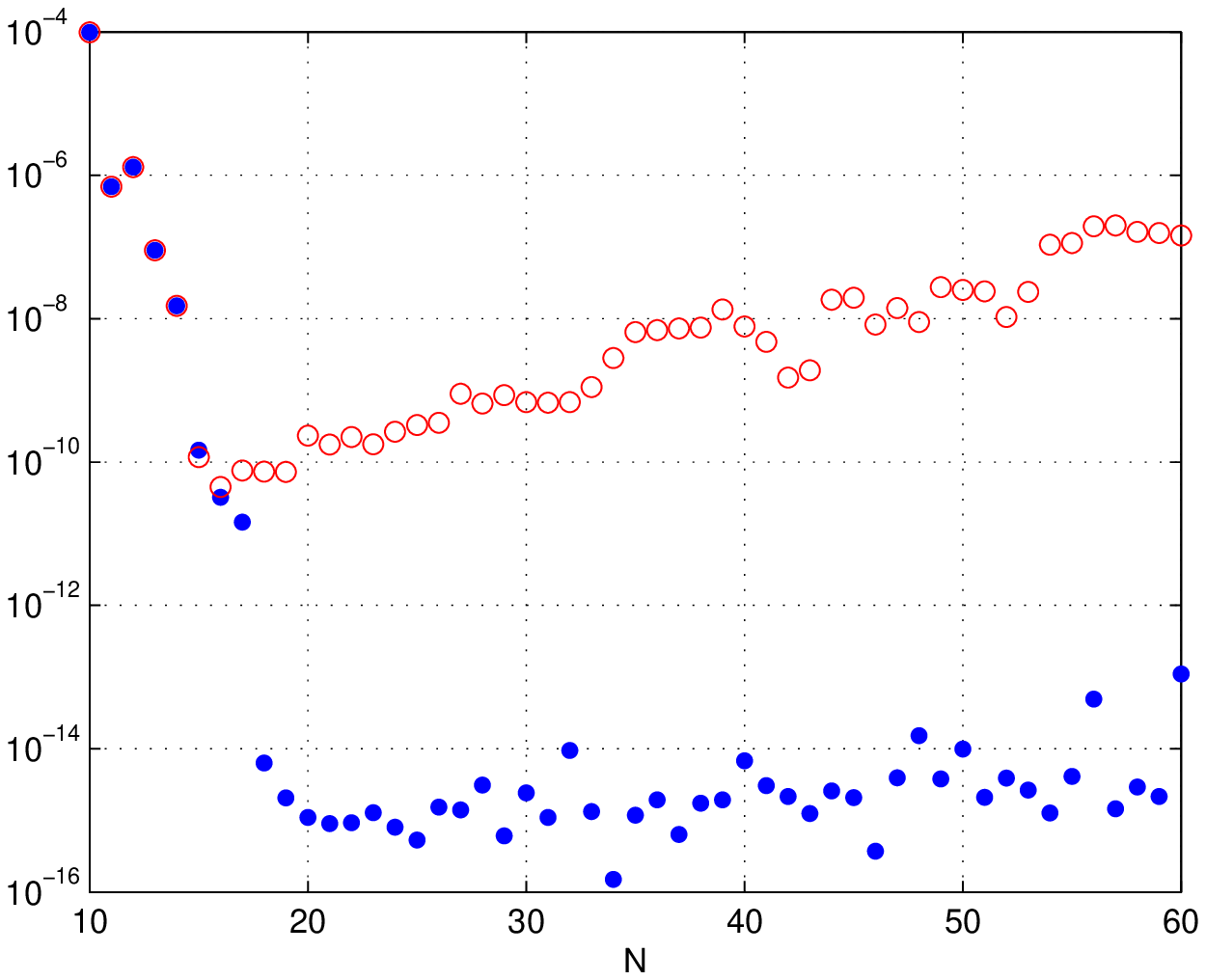}
\end{overpic}
\caption{Errors of the root of the $s$th order derivative of the
truncated Chebyshev expansion $f_N^{C}(x)$ for $s=4$ (left) and
$s=5$ (right). The dots denote the results of the strategy that each
$a_k$ is evaluated by the $\rho^{*}(k)$ and the circles denote the
strategy that all $\{a_k\}_{k=0}^{N}$ are computed by setting
$\rho=1$. } \label{fig:Cheb root derivatives two}
\end{figure}

\section{Conclusion}\label{sec:conclusion}
In this paper, we have discussed the computation of Chebyshev
expansion coefficients of analytic functions. Two strategies have
been proposed based on the computational accuracy and efficiency of
the Chebyshev expansion coefficients. The first strategy is that we
compute all Chebyshev coefficients using the same contour and this
process can be performed efficiently via the FFT. However, this
strategy may not be stable with respect to relative errors.
Alternatively, we propose the second strategy by extending the idea
of Bornemann's analysis for the Taylor coefficients to the Chebyshev
coefficients. We show that an optimal contour exists for each
Chebyshev expansion coefficient. Computing each Chebyshev expansion
coefficient with the optimal radius guarantees the relative error to
be small. We further applied the second strategy to compute
derivatives of analytic functions by differentiating the Chebyshev
expansion. Numerical experiments show that this strategy provides
very accurate approximation even for very high order derivatives.
Finally, we apply this strategy to compute the roots of derivatives
of analytic functions.

The main focus of this paper has been to investigate the benefits of computing Chebyshev coefficients in the complex plane. Several questions remain, and are topic of further research:
\begin{itemize}
\item Can the optimal radius be deduced automatically and numerically?
\item What is an appropriate number of quadrature points to use along the contour in the complex plane, for a given radius and a given coefficient $a_n$?
\end{itemize}

\section*{Acknowledgments}

The first author is supported by the National Natural Science
Foundation of China under grant 11301200. This research was started
while the first author was a Post-Doctoral Research Fellow at the
University of Leuven. The second author is supported by FWO Flanders
projects G.0617.10, G.0641.11 and G.A004.14.

\bibliographystyle{abbrv}
\bibliography{jacobi}

\begin{thebibliography}{10}

\bibitem{abramowitz1965handbook}
M.~Abramowitz and I.~A. Stegun.
\newblock {\em Handbook of mathematical functions with formulas, graphs, and
  mathematical tables}.
\newblock Dover Publications, New York, 1965.

\bibitem{alpert1991fastlegendre}
B.~K. Alpert and V.~Rokhlin.
\newblock A fast algorithm for the evaluation of legendre expansions.
\newblock {\em SIAM Journal on Scientific and Statistical Computing},
  12(1):158--179, 1991.

\bibitem{bernstein1912}
S.~N. Bernstein.
\newblock Sur l'ordre de la meilleure approximation des fonctions continues par
  les polynomes de degr\'e donn\'e.
\newblock {\em Mem. Cl. Sci. Acad. Roy. Belg.}, pages 1--103, 1912.

\bibitem{bornemann2010highorderderivatives}
F.~Bornemann.
\newblock Accuracy and stability of computing high-order derivatives of
  analytic functions by cauchy integrals.
\newblock {\em Foundations of Computational Mathematics}, 11(1):1--63, 2010.

\bibitem{boyd2001spectral}
J.~P. Boyd.
\newblock {\em Chebyshev and {F}ourier spectral methods}.
\newblock Courier Dover Publications, Mineola, NY, 2001.

\bibitem{cantero2012ultraspherical}
M.~J. Cantero and A.~Iserles.
\newblock On rapid computation of expansions in ultraspherical polynomials.
\newblock {\em SIAM J. Numer. Anal.}, 50:307--327, 2012.

\bibitem{demicheli2011legendre}
E.~De~Micheli and G.~A. Viano.
\newblock A new and efficient method for the computation of {L}egendre
  coefficients.
\newblock Technical Report arXiv:1106.0463, 2011.

\bibitem{don1994chebyshevlegendre}
W.~S. Don and D.~Gottlieb.
\newblock The {C}hebyshev-{L}egendre method: Implementing {L}egendre methods on
  {C}hebyshev points.
\newblock {\em SIAM J Numer. Anal.}, 31(6):1519--1534, 1994.

\bibitem{driscoll1994fouriertransforms}
J.~R. Driscoll and D.~Healy.
\newblock Computing {F}ourier transforms and convolutions on the $2$-sphere.
\newblock {\em Adv. in Appl. Math.}, 15(2):202--250, 1994.

\bibitem{driscoll1997fast}
J.~R. Driscoll, D.~M. Healy, and D.~N. Rockmore.
\newblock Fast discrete polynomial transforms with applications to data
  analysis for distance transitive graphs.
\newblock {\em SIAM Journal on Computing}, 26(4):1066--1099, 1997.

\bibitem{dutt1996fastpolynomial}
A.~Dutt, M.~Gu, and V.~Rokhlin.
\newblock Fast algorithms for polynomial interpolation, integration and
  differentiation.
\newblock {\em SIAM J. Numer. Anal.}, 33:1689--1711, 1996.

\bibitem{elliott1964chebyshev}
D.~Elliott.
\newblock The evaluation and estimation of the coefficients in the {C}hebyshev
  series expansion of a function.
\newblock {\em Math. Comp.}, 18:274--284, 1964.

\bibitem{gil2007specialfunctions}
A.~Gil, J.~Segura, and N.~M. Temme.
\newblock {\em Numerical methods for special functions}.
\newblock SIAM, Philadelphia, 2007.

\bibitem{hardy1915}
G.~H. Hardy.
\newblock The mean value of the modulus of an analytic function.
\newblock {\em Proc. London Math. Soc.}, 14:269--277, 1915.

\bibitem{inda2001legendreparallel}
M.~A. Inda, R.~H. Bisseling, and D.~K. Maslen.
\newblock On the efficient parallel computation of {L}egendre transforms.
\newblock {\em SIAM J. Sci. Comput.}, 23:271--303, 2001.

\bibitem{iserles2009firstcourse}
A.~Iserles.
\newblock {\em A First Course in the Numerical Analysis of Differential
  Equations}.
\newblock Cambridge Univ. Press, Cambridge, 2009.

\bibitem{iserles2011fastlegendre}
A.~Iserles.
\newblock A fast and simple algorithm for the computation of {L}egendre
  coefficients.
\newblock {\em Numer. Math.}, 117:529--553, 2011.

\bibitem{keiner2009fastgegenbauer}
J.~Keiner.
\newblock Computing with expansions in {G}egenbauer polynomials.
\newblock {\em SIAM J. Sci. Comput.}, 31(3):2151--2171, 2009.

\bibitem{trefethen2011chebfun}
{L. N. Trefethen and others}.
\newblock Chebfun {V}ersion 4.0, 2011.
\newblock http://www.maths.ox.ac.uk/chebfun/.

\bibitem{mason2003chebyshev}
J.~C. Mason and D.~C. Handscomb.
\newblock {\em Chebyshev polynomials}.
\newblock Chapman and Hall/CRC, 2003.

\bibitem{miel1985differentiation}
G.~Miel and R.~Mooney.
\newblock On the condition number of {L}agrangian numerical differentation.
\newblock {\em Appl. Math. Comput.}, 16:241--252, 1985.

\bibitem{potts1998fastpolynomialtransforms}
D.~Potts, G.~Steidl, and M.~Tasche.
\newblock Fast algorithms for discrete polynomial transforms.
\newblock {\em Math. Comput.}, 67(224):1577--1590, 1998.

\bibitem{rivlin1974chebyshev}
T.~J. Rivlin.
\newblock {\em The {C}hebyshev polynomials}.
\newblock Wiley, 2nd edition, 1974.

\bibitem{szego1939polynomials}
G.~Szeg{\"o}.
\newblock {\em Orthogonal polynomials}.
\newblock American Mathematical Society, Providence, RI, 1939.

\bibitem{trefethen2012atap}
L.~N. Trefethen.
\newblock {\em Approximation {T}heory and {A}pproximation {P}ractice}.
\newblock SIAM, Philadelphia, 2012.

\bibitem{trefethen2014trapezoidal}
L.~N. Trefethen and J.~A.~C. Weideman.
\newblock The exponentially convergent trapezoidal rule.
\newblock {\em SIAM Review}, 56(3):385--458, 2014.

\bibitem{xiang2013fastlegendre}
S.~Xiang.
\newblock On fast algorithms for the evaluation of {L}egendre coefficients.
\newblock {\em Applied Mathematics Letters}, 26(2):194--200, 2013.

\end{thebibliography}

\end{document}